\numberwithin{equation}{section}
\renewcommand{\theequation}{\arabic{section}.\arabic{equation}}
\theoremstyle{plain}
\newtheorem{theorem}{Theorem}[section]
\newtheorem{lemma}[theorem]{Lemma}
\newtheorem{proposition}[theorem]{Proposition}
\newtheorem{corollary}[theorem]{Corollary}
\newtheorem{remark}[theorem]{Remark}
\theoremstyle{definition}
\newtheorem{definition}[theorem]{Definition}
\renewcommand{\theequation}{\thesection.\arabic{equation}}
\def\R{\mathbb{R}}
\date{}
\begin{document}

\title{Hierarchical null controllability of a degenerate parabolic equation with nonlocal coefficient}

\author[1]{Juan L\'imaco \thanks{email: jlimaco@id.uff.br - Corresponding author} }
\author[1]{João Carlos Barreira \thanks{email: jcbarreira95@gmail.com}}
\author[1]{Suerlan Silva \thanks{email: suerlansilva@id.uff.br}}
\author[1]{Luis P. Yapu
\thanks{email: luis.yapu@gmail.com}}

\affil[1]{Instituto de Matem\'atica e Estat\'\i stica, Universidade Federal Fluminense, Rua Prof. Marcos Waldemar de Freitas Reis S/N (Campus do Gragoat\'a), Niter\'oi, CEP 24210-201, Rio de Janeiro, Brazil}


\maketitle

\begin{abstract}
\noindent In this paper we use a Stackelberg-Nash strategy to show the local null controllability of a parabolic equation where the diffusion coefficient is the product of a degenerate function in space and a nonlocal term. We consider one control called \textit{leader} and two controls called \textit{followers}. To each leader we associate a Nash equilibrium corresponding to a bi-objective optimal control problem; then, we find a leader that solves the null controllability problem. The linearized degenerated system is treated adapting Carleman estimates for degenerated systems from Demarque, Límaco and Viana \cite{DemarqueLimacoViana_deg_sys2020} and the local controllability of the non-linear system is obtained using Liusternik's inverse function theorem. The nonlocal coefficient originates a multiplicative coupling in the optimality system that gives rise to interesting calculations in the applications of the inverse function theorem.  
\end{abstract}

\textbf{MSC Classification (2020)}: Primary: 35K65, 93B05; Secondary: 93C10. 

\textbf{keywords}: Degenerate parabolic equations, Controllability, Nonlinear systems in Control Theory, nonlocal term, Carleman inequalities.


\section{Introduction} \label{S:Intro}

Let $T>0$, and define the cylinder $Q=(0,1)\times (0,T)$. The aim of this paper is to study a multi-objective problem based on the nonlocal degenerate parabolic equation

\begin{equation}\label{eq:PDE}
	\left\{\begin{aligned}
		&y_t - \left(a(x)\ell\left(\int_{0}^{1}y \ dx\right) y_x\right)_x = h \mathds{1}_{\mathcal{O}} + v^{1} \mathds{1}_{\mathcal{O}_{1}} + v^{2} \mathds{1}_{\mathcal{O}_2} &&\text{in}&& Q,\\ &y(0,t)=y(1,t)=0&&\text{on}&& (0,T), \\ 
		&y(\cdot,0) = y_0 &&\text{in}&& (0,1),
	\end{aligned}
	\right.
\end{equation}
where $y=y(x,t)$ is the state, $y_{0}$ is the initial data, $\mathds{1}_A$ denotes the characteristic function of the set $A$, and $h,v^{1},v^{2}$ are control functions acting on the sets $\mathcal{O}, \mathcal{O}_1,\mathcal{O}_2\subset (0,1)$ respectively. We suppose that the function $\ell:\mathbb{R} \rightarrow \mathbb{R}$ is of class $C^2$ with bounded derivative and $\ell(0)>0$. Let us assume the function $a$ satisfies the \emph{weakly degenerate condition}\footnote{Some comments about the strong case will give in Section \ref{sec:final_remarks}.} defined in \cite{Alabau_cannarsa_fragnelli-06} and \cite{de2023null}, i.e, $a\in C([0,1]) \cap C^1((0,1]),$ 
\begin{equation}\label{conditionunder-a(x)}
a(x) > 0,\ \ a(0)=0,\ \ \text{and}\ \ a'(x) \ge 0 \ \ \text{in} \ \ (0,1]
\end{equation}
and
\begin{equation}\label{K-estimate-a(x)}
xa'(x)\leq K a(x), \ \ \text{with}\ \ K\in [0,1).
\end{equation}
In other words, the function $a$ exhibits a behavior similar to $x^\gamma$, with $\gamma \in (0,1)$. Due to the degeneracy of the equation, the initial condition $y_0$  is considered in the weighted space $H_a^1(0,1)$, which will be formally defined in Section~\ref{Sec:characterization}.

The nonlocal terms in \eqref{eq:PDE} naturally arise in various physical models. For example, they can appear in heat conduction in materials with memory, nuclear reactor dynamics, and population dynamics. In the latter case, such a system can model the dispersion of a gene within a given population. In this context, $x$ represents the gene type, $y = y(t, x)$ denotes the distribution of individuals at time $t$ with gene type $x$ and $a(x)$ is the gene dispersion coefficient. From a genetic perspective, this degeneracy property is natural: if a population does not contain a certain gene type, it cannot be transmitted to its offspring; see for instance \cite{echarroudi2014null}. For additional references on works involving nonlocal terms, see \cite{chipot2003remarks,zheng2005asymptotic,shimakura1992partial}. 

The control problem studied in this work involves achieving three distinct objectives, each associated with a specific control function: $h$, $v_1$, and $v_2$. The controls $v_1$ and $v_2$ aim to provide the best approximation of the solution $y$ of \eqref{eq:PDE} (associated with $h$) for the prescribed target configurations $y_{i,d}$ within specified regions $\mathcal{O}_{i,d}$, for $i = 1, 2$. The control $h$, in turn, is designed to drive the solution $y$ of \eqref{eq:PDE}, associated with the pair $(v^1, v^2)$, to zero. These three objectives are inherently conflicting and therefore must be addressed simultaneously. To this end, we adopt a combined strategy. Specifically, we utilize a noncooperative Nash equilibrium approach \cite{EssaysonGameTheory} for the relationship between the minimizing controls $v^{1}$ and $v^{2}$, coupled with a hierarchical Stackelberg strategy \cite{von1934marktform} where $h$ acts as the leader and $v^{1},v^{2}$ serve as followers. This integrated approach, known as the Stackelberg-Nash strategy, provides an effective framework for addressing multi-objective control problems. We now proceed to describe our problem formulation in precise terms.

Let us consider the sets $\mathcal{O}_{1,d}, \mathcal{O}_{2,d}\subset (0,1)$ representing the observation domains of the followers, respectively. Given functions $y_{i,d} = y_{i,d}(x,t)$ and constants $\alpha_i,\mu_i>0$, we define the cost functionals
\begin{equation}\label{eq:def_Ji}
J_{i}(h,v^{1},v^{2}) = \frac{\alpha_i}{2} \int_{\mathcal{O}_{i,d}\times(0,T)} |y-y_{i,d}|^2 dxdt + \frac{\mu_i}{2} \int_{\mathcal{O}_i\times(0,T)} |v^i|^2 dxdt.
\end{equation}
The concepts of Nash Equilibrium and null controllability are given below:
\begin{definition} Given $y_{0}\in H_a^1(0,1)$ and for a fixed $h\in L^{2}(\mathcal{O}\times(0,T))$, a pair $(v^{1},v^{2})\in L^{2}(\mathcal{O}_{1}\times(0,T))\times L^{2}(\mathcal{O}_{2}\times(0,T))$  is a \emph{Nash equilibrium} for the functionals $J_1$ and $J_2$ if
\begin{equation}\label{minimousfunctional}
\left\{\begin{aligned}&J_1(h,v^{1},v^{2}) = \min_{\hat{v}^{1}\in L^{2}(\mathcal{O}_{1}\times(0,T))} J_1(h,\hat{v}^{1}, v^{2}), \\
&J_2(h,v^{1},v^{2}) = \min_{\hat{v}^{2}\in L^{2}(\mathcal{O}_{2}\times(0,T))} J_1(h,v^{1},\hat{v}^{2}).
\end{aligned}
\right.
\end{equation}    
\end{definition}
Note that, if the functionals $J_{1}$ and $J_{2}$  are convex, then a pair $(v_{1},v_{2})$ is a Nash equilibrium if and only if
\begin{equation*}
\left\{\begin{aligned}
&J_1'(h,v^{1},v^{2})\cdot(\hat{v}^{1},0) = 0, \quad \forall \hat{v}^{1} \in L^{2}(\mathcal{O}_{1}\times(0,T)), \\
&J_2'(h,v^{1},v^{2})\cdot(0,\hat{v}^{2}) = 0, \quad \forall \hat{v}^{2} \in L^{2}(\mathcal{O}_{2}\times(0,T)).
\end{aligned}
\right.
\end{equation*}
In nonlinear systems one expects to lose convexity of the functional, and the Nash equilibrium condition \eqref{minimousfunctional} is not necessarily equivalent to affirmation above. For this reason, it is convenient to weaken the definition of equilibrium.  
\begin{definition} Given $y_{0}\in H_a^1(0,1)$ and for a fixed $h\in L^{2}(\mathcal{O}\times(0,T))$, a pair $(v^{1},v^{2})\in L^{2}(\mathcal{O}_{1}\times(0,T))\times L^{2}(\mathcal{O}_{2}\times(0,T))$  is a \emph{Nash quasi-equilibrium} for the functionals $J_1$ and $J_2$ if
\begin{equation}\label{derivateoffunctional}
\left\{\begin{aligned}
&J_1'(h,v^{1},v^{2})\cdot(\hat{v}^{1},0) = 0, \quad \forall \hat{v}^{1} \in L^{2}(\mathcal{O}_{1}\times(0,T)), \\
&J_2'(h,v^{1},v^{2})\cdot(0,\hat{v}^{2}) = 0, \quad \forall \hat{v}^{2} \in L^{2}(\mathcal{O}_{2}\times(0,T)).
\end{aligned}
\right.
\end{equation}
\end{definition}
Once the Nash equilibrium have been identified for any $h$, for any $T>0$ we look for a control $h \in L^2(\mathcal{O}\times(0,T))$ such that the system is \emph{null controllable} at time $T$, i.e. 
\begin{equation*}
y(x,T) = 0, \text{ for } x \in (0,1).    
\end{equation*}

This approach was introduced in \cite{Lions2,Lions1}, where the hierarchical Stackelberg strategy was developed for parabolic and hyperbolic problems, respectively. Subsequently, \cite{Di-L} established a connection between the Stackelberg strategy and the Nash equilibrium \cite{EssaysonGameTheory}, adapting it to scenarios with one leader and multiple followers. In this framework, the main objective is to simultaneously minimize $N$ cost functionals to achieve a Nash equilibrium among the followers, while the leader drives the system to satisfy an approximate controllability requirement -- thus implementing  the Stackelberg–Nash strategy.

More recently, \cite{F-E-M} investigated Stackelberg–Nash strategies and established the first hierarchical results in the context of exact controllability for a class of parabolic equations, both linear and semilinear. They also identified conditions under which a Nash quasi-equilibrium is, in fact, a Nash equilibrium. These results were later refined in \cite{F-E-M2} by relaxing the requirements on the followers’ observation domains. Continuing in this direction, \cite{Huaman2022} extended the application of Stackelberg–Nash strategies to parabolic equations with a nonlinear diffusion term. Further developments include \cite{araruna2018stackelberg}, which provides the first results for degenerate parabolic equations using the same strategies, and \cite{djomegne2022hierarchical}, which presents a comprehensive study of hierarchical control for the semilinear case. In \cite{Teresa2024}, the author considered a hierarchy of two controls applied to the Boussinesq system, where the objective of the \emph{leader} control is to achieve null controllability, while the objective of the \emph{follower} control is to keep the state close to a prescribed trajectory. 

Modifying the hierarchical strategy previously discussed, \cite{calsavara2022new} investigated optimal controllability and controllability within a hierarchical control framework, one leader and one follower, for linear and semilinear parabolic PDEs. In this setting, the follower was assigned the responsibility for ensuring the system's controllability, while the choice of the leader was defined as the solution to an optimal control problem, see Subsection \ref{Subsection strong degenerate} for more details. According to the authors, the motivation for this approach was drawn from applications in environmental sciences, particularly in situations where a government seeks to keep pollution levels within acceptable limits, while followers pursue their own objectives.

Now, while most previous studies addressed internal control problems, \cite{araruna2020hierarchical} examined scenarios involving boundary controls, analyzing a mixed control framework that combines both internal and boundary controls. The case in which all controls are applied on the boundary was subsequently solved in \cite{djomegne2025hierarchical}. Regarding dynamic boundary conditions, \cite{IdrissBoutaayamou} initiated studies in this direction, and \cite{oukdach2025multi} considered the stochastic parabolic case. Concerning stochastic equations, it is worth highlighting the first contribution in this context \cite{zhang2019controllability}, which was later extended to the degenerate case in \cite{yu2023two}. 

As established in earlier works, degenerate systems have garnered significant attention following the foundational results presented in \cite{Alabau_cannarsa_fragnelli-06} and \cite{Cannarsa_Teresa-09}. Within this framework, notable contributions include the coupled system studied in \cite{Khodja_degen-11}, with subsequent improvements in \cite{Khodja_coupled_deg-13}. Motivated by climate change applications, \cite{FLORIDIA-14} pioneered bilinear controllability analysis for these systems. Addressing similar environmental concerns, \cite{akil2025non} proposed a more realistic model incorporating Robin boundary conditions. Regarding nonlocal coefficients, significant advances were achieved in \cite{DemarqueLimacoViana_deg_eq2018} and \cite{DemarqueLimacoViana_deg_sys2020}, building upon the theoretical framework established in \cite{EFC_Lim_Silvano_nonlocal-12}.

Our main result establishes the existence of a Nash quasi-equilibrium pair of controls $(v^{1},v^{2})$ for $J_1$ and $J_2$, leading to the local null controllability of equation \eqref{eq:PDE}. 

\begin{theorem}\label{thm:local_null_controllability}
Assume the hypotheses of the setting in \eqref{eq:PDE} and that, 
\begin{equation*}
\mathcal{O}_d := \mathcal{O}_{1,d} = \mathcal{O}_{2,d}
\end{equation*}
and
\begin{equation*}
\mathcal{O}_{d} \cap \mathcal{O} \neq \emptyset.
\end{equation*}

Then, for any $T>0$, there exist $\delta>0$ and a positive function $\rho_2(t)$ blowing up at $t=T$ such that, if $\rho_2\, y_{i,d} \in L^2(\mathcal{O}_d \times (0,T))$ and $y_0 \in H_a^1(0,1)$ satisfies
\begin{equation*}
\|y_{0}\|_{H_a^1(0,1)} \leq \delta,   
\end{equation*}
then there exist a control $h \in L^2(\mathcal{O}\times(0,T))$ and an associated Nash quasi-equilibrium $(v^{1},v^{2})$ such that the solution of \eqref{eq:PDE} is null-controllable at time $T$.
\end{theorem}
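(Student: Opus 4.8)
The plan is to recast the hierarchical problem as a null controllability result for a coupled \emph{optimality system} and then pass from the linear to the nonlinear regime via Liusternik's inverse function theorem.

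First I would characterize the Nash quasi-equilibrium. For a fixed leader $h$, conditions \eqref{derivateoffunctional} are the Euler--Lagrange equations for $J_1,J_2$. Introducing adjoint states $p^1,p^2$ solving backward degenerate parabolic equations with sources $\alpha_i(y-y_{i,d})\mathds{1}_{\mathcal{O}_d}$ and $p^i(\cdot,T)=0$, the optimality relations become explicit, $v^i=-\mu_i^{-1}p^i\mathds{1}_{\mathcal{O}_i}$. Substituting back into \eqref{eq:PDE} yields the optimality system, a forward--backward cascade of three spatially degenerate equations in $(y,p^1,p^2)$, and the task reduces to choosing $h$ so that $y(\cdot,T)=0$.

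The analytic core is the null controllability of the linearization of this system at the origin. Since $\ell(0)>0$ and the Fréchet derivative at $0$ of $y\mapsto\big(a(x)\,\ell(\int_0^1 y\,dx)\,y_x\big)_x$ equals $\big(a(x)\ell(0)\,y_x\big)_x$ — while the nonlocal correction appearing in the adjoint equations, a product of factors each vanishing at the origin, has zero linearization there — the linearized system is a purely local degenerate cascade: a forward $z$-equation driven by $h\mathds{1}_{\mathcal{O}}-\sum_i\mu_i^{-1}q^i\mathds{1}_{\mathcal{O}_i}$, coupled to backward $q^i$-equations through $\alpha_i z\mathds{1}_{\mathcal{O}_d}$, with the targets $y_{i,d}$ acting as sources. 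For the adjoint of this forward--backward system I would prove a global Carleman estimate, adapting the degenerate estimates of \cite{DemarqueLimacoViana_deg_sys2020}, with time weights blowing up as $t\to T$ and spatial weights compatible with the degeneracy \eqref{conditionunder-a(x)}--\eqref{K-estimate-a(x)} at $x=0$. The assumptions $\mathcal{O}_{1,d}=\mathcal{O}_{2,d}=\mathcal{O}_d$ and $\mathcal{O}_d\cap\mathcal{O}\neq\emptyset$ are precisely what permit a single family of Carleman weights to dominate the leader region $\mathcal{O}$ and the followers' observation region simultaneously. The estimate yields an observability inequality and, by duality, a controllability result producing $h$ together with $\rho_2(t)$-weighted bounds on $(y,p^1,p^2)$; this is why $\rho_2\,y_{i,d}\in L^2(\mathcal{O}_d\times(0,T))$ is the natural admissibility hypothesis on the targets.

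Finally I would define a nonlinear map $\mathcal{F}$ between weighted solution-and-control spaces whose zeros encode solutions of the full nonlinear optimality-plus-controllability system, arranged so that $\mathcal{F}'(0)$ is exactly the linear operator whose surjectivity is the controllability result above. Liusternik's theorem then furnishes, for $\|y_0\|_{H_a^1(0,1)}\le\delta$ small enough, a solution of $\mathcal{F}=0$, which provides the desired leader, the associated quasi-equilibrium, and the null-controlled trajectory. I expect the principal obstacle to be the nonlocal multiplicative coupling flagged in the abstract: although $\ell(\int_0^1 y\,dx)$ linearizes trivially at the origin, taking the adjoint of the nonlocal operator produces in each $p^i$-equation a spatially constant factor of the form $\ell'(\int_0^1 y\,dx)\int_0^1 a\,y_x\,p^i_x\,dx$, a genuinely bilinear term persisting in the nonlinear remainder. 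Showing that $\mathcal{F}$ is $C^1$ and that this remainder maps into the correct $\rho_2$-weighted target space requires estimating such global bilinear contributions across the whole cascade against the singular Carleman weights — the step where the promised delicate calculations arise.
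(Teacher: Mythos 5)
Your proposal follows essentially the same route as the paper: characterization of the quasi-equilibrium via adjoint states giving $v^i=-\mu_i^{-1}p^i\mathds{1}_{\mathcal{O}_i}$ and the forward--backward optimality system, null controllability of the (local, since the nonlocal corrections vanish at the origin) linearized cascade via Carleman/observability estimates adapted from the cited degenerate-system reference with $\rho_2$-weighted bounds, and then Liusternik's theorem applied to a map between weighted spaces, with the bilinear nonlocal terms $\ell'(\int_0^1 y\,dx)\int_0^1 a\,y_x p^i_x\,dx$ correctly identified as the main technical obstacle. The only cosmetic difference is that the paper constructs the linear control by a Lax--Milgram variational argument rather than abstract duality, which is an implementation detail, not a different method.
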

Our next result establishes sufficient conditions for the Nash quasi-equilibrium pair $(v^{1},v^{2})$ obtained in Theorem~\ref{thm:local_null_controllability} to be a Nash equilibrium, from which we deduce the following result.
\begin{proposition} 
\label{thm:nash_equilibrium}
Let $y_0 \in H_a^1(0,1)$.
Under the hypotheses of Theorem \ref{thm:local_null_controllability}, if $\mu_1,\mu_2$ are sufficiently large, then the Nash quasi-equilibrium pair $(v^{1},v^{2})$ obtained in Theorem \ref{thm:local_null_controllability} is a Nash equilibrium for \eqref{eq:def_Ji} associated to equation \eqref{eq:PDE}.
\end{proposition}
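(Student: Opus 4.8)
The plan is to upgrade the quasi-equilibrium to a genuine equilibrium by a convexity argument: I would show that, for $\mu_1,\mu_2$ large, each partial functional $v^1\mapsto J_1(h,v^1,v^2)$ and $v^2\mapsto J_2(h,v^1,v^2)$ (with the other control and the leader frozen at the quasi-equilibrium values) is strictly convex on the relevant ball. Since the quasi-equilibrium already satisfies the first-order conditions \eqref{derivateoffunctional}, convexity forces the corresponding critical point to be the global minimizer, which is precisely the Nash equilibrium condition \eqref{minimousfunctional}.

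First I would record the derivatives of the state map, which I may assume to be twice Fréchet differentiable from the nonlinear well-posedness established earlier. Fixing $h,v^2$ and writing $y=y(v^1)$, $Y=\int_0^1 y\,dx$, the directional derivative $z=D_{v^1}y\cdot\hat v^1$ solves the linearized degenerate problem
$$z_t-\big(a\ell(Y)z_x\big)_x-\big(a\ell'(Y)Z\,y_x\big)_x=\hat v^1\mathds{1}_{\mathcal{O}_1},\qquad z(0,\cdot)=z(1,\cdot)=0,\quad z(\cdot,0)=0,$$
with $Z=\int_0^1 z\,dx$, while the second derivative $w=D^2_{v^1}y\cdot(\hat v^1,\hat v^1)$ solves the same operator driven by the bilinear source $2\big(a\ell'(Y)Z z_x\big)_x+\big(a\ell''(Y)Z^2 y_x\big)_x$. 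Differentiating $J_1$ twice then yields
$$J_1''(h,v^1,v^2)\cdot(\hat v^1,\hat v^1)=\alpha_1\!\!\int_{\mathcal{O}_d\times(0,T)}\!\!|z|^2+\alpha_1\!\!\int_{\mathcal{O}_d\times(0,T)}\!\!(y-y_{1,d})\,w+\mu_1\!\!\int_{\mathcal{O}_1\times(0,T)}\!\!|\hat v^1|^2,$$
and analogously for $J_2$. The first and third terms are nonnegative, so everything reduces to controlling the sign-indefinite middle term.

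The core step is an a priori estimate $\|w\|_{L^2}\le C\,\|\hat v^1\|^2_{L^2(\mathcal{O}_1\times(0,T))}$ with $C$ independent of $\mu_1$. I would obtain it by weighted energy estimates for the degenerate operator: testing the $z$-equation with $z$ and using $\ell(Y)\ge\ell_0>0$ (valid since the constructed states are small, by the hypothesis $\|y_0\|_{H_a^1}\le\delta$ of Theorem~\ref{thm:local_null_controllability}) gives $\|z\|_{L^2(0,T;H_a^1)}\le C\|\hat v^1\|$; then testing the $w$-equation with $w$ and absorbing the coupling source terms $a\ell'(Y)Z z_x$ and $a\ell''(Y)Z^2 y_x$ into the coercive term $\int a\ell(Y)|w_x|^2$ — using that the nonlocal factors $Z,Z^2$ depend on $t$ only and are bounded by $\|z\|$, together with the a priori bound on $\|y\|_{L^2(0,T;H_a^1)}$ — produces the quadratic bound for $w$. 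Cauchy–Schwarz on the middle term then gives $\big|\alpha_1\int_{\mathcal{O}_d\times(0,T)}(y-y_{1,d})\,w\big|\le C'\|\hat v^1\|^2$, whence $J_1''\ge(\mu_1-C')\|\hat v^1\|^2\ge0$ for $\mu_1\ge C'$, and symmetrically for $J_2$. The main obstacle is exactly this $w$-estimate and cross term: the degeneracy of $a$ forces weighted rather than classical estimates, and the nonlocal coefficient is what generates the multiplicative couplings $Z z_x$ and $Z^2 y_x$ in the source of $w$ — the ``interesting calculations'' anticipated in the abstract — so one must track the constant $C'$ carefully to confirm it does not depend on $\mu_1$.

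Finally I would globalize the convexity to recover the minimization in \eqref{minimousfunctional}. The estimates above give strict convexity of $J_i$ on a fixed ball $B_R$ whose radius is dictated by the regime in which the state map is well defined and the diffusion factor stays positive, $\ell(Y)>0$; there the constant $C'=C'_R$ (depending on $\|y\|$ and $\|y-y_{i,d}\|$ over $B_R$) is finite and $\mu_i$-independent, so $\mu_i>C'_R$ yields $J_i''>0$ on $B_R$. The coercivity $J_i(\hat v^i)\ge\frac{\mu_i}{2}\|\hat v^i\|^2$ then confines both the quasi-equilibrium $v^i$ (which is itself $O(1/\mu_i)$) and any competitor with smaller cost into a ball that shrinks as $\mu_i\to\infty$ and hence sits inside $B_R$; strict convexity there makes the critical point the unique global minimizer. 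This is the delicate point to state correctly, since the underlying problem is only locally well-posed, and it is the smallness hypotheses of Theorem~\ref{thm:local_null_controllability} that keep all relevant states in the good regime.
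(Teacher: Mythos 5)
Your proposal is correct and follows the same overall strategy as the paper --- establish $J_i''\cdot(\hat v^i,\hat v^i)\ge(\mu_i-C)\|\hat v^i\|^2$ with $C$ independent of $\mu_i$, then use convexity to upgrade the quasi-equilibrium conditions \eqref{derivateoffunctional} to the minimization \eqref{minimousfunctional} --- but the technical realization is genuinely different. You work on the primal side: you introduce the second derivative $w=D^2_{v^1}y\cdot(\hat v^1,\hat v^1)$ of the state map, write $J_1''=\alpha_1\int|z|^2+\alpha_1\int(y-y_{1,d})\,w+\mu_1\int|\hat v^1|^2$, and control the sign-indefinite middle term through a quadratic energy estimate $\|w\|_{L^2}\le C\|\hat v^1\|^2$ for the second-order linearized degenerate equation. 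The paper instead works on the dual side: it differentiates the adjoint state with respect to the perturbation parameter, obtaining the pair $(\eta,\theta)$ solving the coupled forward--backward system \eqref{eq:accoplatesystem}, writes $J_1''=\int_{\mathcal{O}_1\times(0,T)}\eta\,\hat v^1+\mu_1\int|\hat v^1|^2$ as in \eqref{eq:D2J1_w1_w2}, and bounds $\int\eta\,\hat v^1$ by transposing onto $\theta$ and invoking the energy estimates of Appendix C for $\phi=p^1$, $y$ and $\theta$ (this is estimate \eqref{eq:integral_eta_w1_bounded}). The two computations are equivalent by duality (your term $\alpha_1\int(y-y_{1,d})\,w$ corresponds to the $\phi$-terms in the paper's expansion of $\int\eta\,\hat v^1$), and both isolate the same nonlocal couplings $Z z_x$ and $Z^2 y_x$. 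What your route buys is that you never differentiate the backward adjoint equation, only the forward state equation twice; what the paper's route buys is that every quantity appearing in the final bound is already controlled by the well-posedness of the optimality system, so no separate a priori estimate for a second-order linearization is needed. One point in your favor: you explicitly address why positivity of the second derivative yields the \emph{global} minimization required by \eqref{minimousfunctional} --- via coercivity confining any competitor with smaller cost to a ball on which the convexity estimate holds --- a step the paper passes over when it concludes directly that ``$J_1$ is convex.''
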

\color{black}

\noindent{\bf Outline of the paper}.
In Section \ref{Sec:characterization} we characterize Nash quasi-equilibrium and we prove Proposition \ref{thm:nash_equilibrium} implying the convexity of the functionals and the fact that the Nash quasi-equilibrium are Nash equilibrium. In Section \ref{Sec:null_linearized_system} we show the null controllability of the linearized system adapting a Carleman estimate for a system of degenerated parabolic equations from \cite{DemarqueLimacoViana_deg_sys2020}. The main result of this section is Theorem \ref{theorem case linear} which besides the null controllability establishes additional estimates needed in the controllability of the nonlinear system. In Section \ref{control for nonlinear system} we apply Liusternik inverse function theorem to show the local null controllability of \eqref{eq:PDE}. The proof of the main result, Theorem \ref{thm:local_null_controllability}, is a consequence of Lemmas \ref{A bem definido}--\ref{Mapa sobrejetivo} where we verify the hypothesis of Liusternik's theorem and apply it to a suitable map whose local surjectivity implies the local controllability result.
In Section \ref{sec:final_remarks}, first we explain an application to a radial multidimensional equation which in polar coordinates can be rewritten in the form of equation  \eqref{eq:PDE}, and we present some related comments and open issues. Finally in Appendix \ref{appendix A} we show two technical results needed in Section \ref{control for nonlinear system}, in the Appendix \ref{appendix B} we show the technical estimates for continuity map and in Appendix \ref{appendix C} we show the well-posedness of the optimality system obtained in Section \ref{Sec:characterization}.

\section{Existence and uniqueness of Nash equilibrium}
\label{Sec:characterization}

In this section, we prove Proposition \ref{thm:nash_equilibrium}. Specifically, for each control $h \in L^2(\mathcal{O} \times (0,T))$, we establish the existence and uniqueness of a Nash equilibrium pair $(v^{1}, v^{2})$ associated with the functionals $J_1$ and $J_2$. This result, in particular, implies the convexity of the functionals and guarantees that $(v^1, v^2)$ corresponds to the solution of a minimization problem, which is characterized by the conditions \eqref{minimousfunctional} and \eqref{derivateoffunctional}. We begin by characterizing the equilibrium pair that satisfies \eqref{derivateoffunctional} through an optimality system. Then, after proving the well-posedness of this system, we verify that the obtained pair also satisfies the minimization condition \eqref{minimousfunctional}, thus confirming it as the unique Nash equilibrium.

To develop this demonstration, we define below the weighted spaces associated with the function $a$, originally introduced and studied in the context of controllability in \cite{Alabau_cannarsa_fragnelli-06}.
 \begin{eqnarray*} 
H_{a}^{1}(0,1)&=&\left\{ u \in L^2(0,1); u \text{ is absolutely continuous in } (0,1],\right.\\ 
 &\qquad&\left.\sqrt{a} u_x \in L^2(0,1) \text{ and } u(0)=u(1)=0\right\},
\end{eqnarray*}
with the norm $\| u \|_{H_{a}^{1}(0,1)}^2 = \|u\|_{L^2(0,1)}^2 + \|\sqrt{a} u_x \|_{L^2(0,1)}^2,$
and
\begin{eqnarray*}
H_a^2(0,1) &=& \{ u \in H_a^1(0,1) \ ; \ a u_x \in H^1(0,1) \},
\end{eqnarray*}
with the norm
$
\| u \|_{H_a^2(0,1)}^2 = \|u\|_{H_a^1(0,1)}^2 + \|(a u_x)_x \|_{L^2(0,1)}^2.
$

These spaces will be employed throughout the remainder of this work.

\subsection{Characterization of Nash quasi-equilibrium}
Let us introduce the spaces $\mathcal{V}_{i}=L^{2}(\mathcal{O}_{i}\times(0,T))$ and $\mathcal{V}=\mathcal{V}_{1}\times\mathcal{V}_{2}$.
Assume that $(v^{1},v^{2})$ is a Nash quasi-equilibrium. We will compute the expressions
\begin{equation}
\left\{
    \begin{array}{llll}
    J_1'(h; v^{1},v^{2})\cdot (\hat{v}^{1},0)=0,\quad \forall \hat{v}_1\in \mathcal{V}_{1},
    \\      
    J_2'(h; v^{1},v^{2})\cdot (0,\hat{v}^{2})=0,\quad \forall \hat{v}_2\in \mathcal{V}_{2}.
    \end{array}
    \right.
\end{equation}
Given a $\hat{v}_{1} \in \mathcal{V}_{1}$ and  $\varepsilon>0$, consider a small perturbation of \eqref{eq:def_Ji} around $v_{1}$.  We have \color{black}

$$
J_1(h,v^{1}+\varepsilon\hat{v}^{1},v^{2}) = \frac{\alpha_1}{2} \int_{\mathcal{O}_{1,d}\times(0,T)} |y^{\varepsilon}-y_{1,d}|^2 dxdt + \frac{\mu_1}{2} \int_{\mathcal{O}_1\times(0,T)} | v^{1} + \varepsilon \hat{v}^{1} |^2 dxdt,
$$
where $y^\varepsilon$ is the solution of
\begin{equation}\label{eq:PDE_lambda_1}
\left\{\begin{aligned}
    &y_t^\varepsilon - \left(a(x)\ell\left(\int_0^1 y^\varepsilon dx\right) y_x^\varepsilon\right)_x = h \mathds{1}_{\mathcal{O}} + (v^{1} + \varepsilon \hat{v}^{1}) \mathds{1}_{\mathcal{O}_1} + v^{2} \mathds{1}_{O_2}, &&\text{in}&& Q, \\
&y^\varepsilon(0,t)=y^\varepsilon(1,t)=0, &&\text{on}&&\ (0,T), \\
&y^\varepsilon(\cdot,0) = y_{0}, &&\text{in}&&\ (0,1).
\end{aligned}
	\right.
\end{equation}
Dividing \eqref{eq:PDE} from \eqref{eq:PDE_lambda_1} and divide by $\varepsilon$, we find that
\begin{equation*}
\frac{y_t^\varepsilon - y_t}{\varepsilon} + \left( a(x) \left[ \ell\left(\int_0^1 y^\varepsilon dx\right) \frac{y_x^\varepsilon -y_x}{\varepsilon}  + \frac{1}{\varepsilon}\left(\ell\left(\int_0^1 y^\varepsilon dx\right) - \ell\left(\int_0^1 y dx\right)\right) y_x \right] \right)_x =  \hat{v}^{1} \mathds{1}_{\mathcal{O}_1}.
\end{equation*}
Using the continuity of the initial data of \eqref{eq:PDE}, we have $y^{\varepsilon} \to y$ as $\varepsilon \to 0$. Let us define the limit
\begin{equation}
    \omega^{1} = \lim_{\varepsilon \to 0} \frac{y^{\varepsilon} - y}{\varepsilon}.
\end{equation}
In particular, we obtain
\begin{equation}\label{eq:PDE_w1}
\left\{\begin{array}{lll}
\omega_{t}^{1} - \left( a(x) \ell\left(\displaystyle\int_0^1 y \,dx\right) \omega_{x}^{1} \right)_x\\ - \left( a(x) \ell'\left(\displaystyle\int_0^1 y\, dx\right) \displaystyle\int_0^1 \omega^1(z,t) dz \ y_x \right)_x = \hat v^{1}  \mathds{1}_{\mathcal{O}_1} &\text{in}& Q, \\ 
\omega^1 (0,t)=\omega^1 (1,t) = 0 &\text{on}& (0,T), \\
\omega^1(\cdot,0) = 0  &\text{in}& (0,1).
\end{array}
	\right.
\end{equation} 
The adjoint system of \eqref{eq:PDE_w1} is
\begin{equation}\label{eq:adjoint_of_w1}
\left\{\begin{aligned}
&-p_t^1 - \left( a(x) \ell\left(\displaystyle\int_0^1 y \,dx\right) p_{x}^{1} \right)_{x}\\ &+\ell'\left(\int_0^1 y \,dx\right)\int_0^1 ( a(x) y_x \ p^1_x) dx = \alpha_1 (y-y_{1,d}) \mathds{1}_{\mathcal{O}_{1,d}} &&\text{in}&& Q, \\
&p^1(0,t)=p^1 (1,t) = 0 &&\text{on}&& (0,T), \\
&p^1(\cdot,T) = 0 &&\text{in}&& (0,1).
\end{aligned}
\right.
\end{equation} 
Multiplying the first equation of \eqref{eq:adjoint_of_w1} by $\omega^1$ and integrating in Q we get
\begin{equation*}
\int_Q \hat{v}^{1} \mathds{1}_{\mathcal{O}_1} p^1 \ dxdt = \int_Q \alpha_1 (y-y_{1,d}) \mathds{1}_{\mathcal{O}_1}\omega^1 \ dxdt.
\end{equation*}
We obtained
\begin{equation}\label{derivate_functional_1_test}
J'_{1}(h,v^{1},v^{2})\cdot(\hat{v}^{1},0) = \alpha_1 \int_{\mathcal{O}_{1,d}\times(0,T)} (y-y_{1,d}) \omega^1 \ dxdt + \mu_1 \int_{\mathcal{O}_1\times (0,T)}  v^{1} \hat{v}^{1} \ dxdt = 0,
\end{equation}
and comparing the last two expression we get
\begin{equation*}
\int_Q \hat{v}^{1} \mathds{1}_{\mathcal{O}_{1}} p^1 dxdt = - \mu_1 \int_{\mathcal{O}_{1} \times (0,T)} \hat{v}^{1} v^{1} \ dxdt,
\end{equation*}
from which
\begin{equation*}
v^{1} = -\frac{1}{\mu_1} p^1 \mathds{1}_{\mathcal{O}_{1}}.
\end{equation*}
Analogously,  we can obtain 
\begin{equation*}
v^{2} = -\frac{1}{\mu_2} p^2 \mathds{1}_{\mathcal{O}_{2}}.
\end{equation*}
Then, we can formulate the optimality system:
\begin{equation}\label{eq:optimality_system}
\left\{\begin{aligned}
&y_t - \left(a(x)\ell\left(\int_0^1 y\, dx\right) y_x\right)_x = h \mathds{1}_{\mathcal{O}} - \frac{1}{\mu_1} p^1 \mathds{1}_{\mathcal{O}_{1}} - \frac{1}{\mu_2} p^2 \mathds{1}_{\mathcal{O}_{2}} &&\text{in}&& Q, \\
&-p_t^i - \left( a(x) \ell\left(\int_0^1 y\, dx\right) p_x^i \right)_x \\ 
&+\ell'\left(\int_0^1 y \,dx\right) \int_0^1 ( a(x) y_x \ p_x^i) dx = \alpha_i (y-y_{i,d}) \mathds{1}_{\mathcal{O}_{i,d}}  &&\text{in}&& Q,\\
&y(0,t)=y(1,t)=0, \ p^i(0,t) = p^i(1,t) = 0 &&\text{on}&& (0,T), \\
&y(\cdot,0) = y_{0}, \ \ p^i(\cdot,T) = 0  &&\text{in }&& (0,1). \\
\end{aligned}
\right.
\end{equation}
The existence and uniqueness of the system \eqref{eq:optimality_system} is obtained in the following proposition.

\begin{proposition}
Let $y_0 \in H^1_a(0,1)$,  $h \in L^2(0,T;L^2(\mathcal{O}))$ and  $y_{i,d} \in L^2(0,T;L^2(\mathcal{O}_{i,d}))$, with $i=\lbrace 1,2\rbrace$. Then, there exists a constant $\sigma > 0$ such that, if
$$
\|y_0\|_{H^1_a(0,1)} 
+ \|h\|_{L^2(0,T;L^2(\mathcal{O}))} + \sum_{i=1}^2 \|y_{i,d}\|_{L^2(0,T;L^2(\mathcal{O}_{i,d}))} \leq \sigma,
$$
the system \eqref{eq:optimality_system} has a unique solution $(y,p^1,p^2)$ satisfying
\begin{equation*}
\begin{split}
\|(y,p^1,p^2)\|_{L^2(0,T;L^2(0,1)) \cap H^1(0,T;H^2_a(0,1))} \leq C &\left(\|y_0\|_{H^1_a(0,1)} + \|h\|_{L^2(0,T;L^2(\mathcal{O}))} + \sum_{i=1}^2 \|y_{i,d}\|_{L^2(0,T;L^2(\mathcal{O}_{i,d}))} \right),
\end{split}    
\end{equation*}
where $C$ is a positive constant independent of $\mu_1$ and $\mu_2$. 
\end{proposition}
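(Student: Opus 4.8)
The plan is to solve the optimality system by a Banach fixed-point argument that decouples the nonlocal nonlinearity from the underlying linear degenerate dynamics. Set $\W := L^2(0,T;H_a^2(0,1)) \cap H^1(0,T;L^2(0,1))$, which embeds continuously in $C([0,T];H_a^1(0,1))$; in particular the map $t \mapsto \int_0^1 u(\cdot,t)\,dx$ is continuous and controlled by $\|u\|_\W$ for every $u\in\W$. Fix $R>0$ to be chosen and work on the closed ball $B_R \subset \W^3$. Given $\bar z = (\bar y, \bar p^1, \bar p^2) \in B_R$, freeze the nonlocal ingredients by setting $L(t) := \ell\!\left(\int_0^1 \bar y\,dx\right)$ and $M(t) := \ell'\!\left(\int_0^1\bar y\,dx\right)$, and define $\Phi(\bar z) = (y,p^1,p^2)$ as the solution of the linear system obtained from \eqref{eq:optimality_system} by replacing each nonlocal argument with $\bar y$: thus $y$ solves the forward equation with diffusion $a(x)L(t)$ and right-hand side $h\mathds{1}_{\mathcal{O}} - \sum_i \mu_i^{-1} p^i \mathds{1}_{\mathcal{O}_i}$, while each $p^i$ solves the backward equation with diffusion $a(x)L(t)$, zeroth-order nonlocal term $M(t)\int_0^1 a\,\bar y_x\, p^i_x\,dx$, and source $\alpha_i(y-y_{i,d})\mathds{1}_{\mathcal{O}_{i,d}}$. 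Note the couplings $-\mu_i^{-1}p^i$ and $\alpha_i(y-y_{i,d})$ are kept intact, so the inner problem is a \emph{linear coupled} forward–backward system, and a fixed point of $\Phi$ is precisely a solution of \eqref{eq:optimality_system}.

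First I would establish well-posedness of this frozen linear coupled system. Since $\ell(0)>0$ and $\int_0^1\bar y\,dx$ is small uniformly in $t$ once $R$ is small, continuity of $\ell$ gives $L(t) \ge \ell(0)/2 >0$ on $[0,T]$; hence $a(x)L(t)$ is a genuine weakly degenerate coefficient, uniformly in $t$, and the scalar theory in the spirit of \cite{Alabau_cannarsa_fragnelli-06, DemarqueLimacoViana_deg_sys2020} yields $\W$-maximal regularity for each scalar equation. To obtain the estimate with constant independent of $\mu_1,\mu_2$ I would run an energy identity on the coupled pair rather than iterate the scalar bounds: testing the forward equation with $y$ and the $i$-th backward equation with $(\mu_i\alpha_i)^{-1}p^i$ and adding, the boundary-in-time terms produce the nonnegative quantities $\tfrac12\|y(T)\|^2 + \sum_i (2\mu_i\alpha_i)^{-1}\|p^i(0)\|^2$ together with the dissipation $\int_Q aL\,y_x^2 + \sum_i (\mu_i\alpha_i)^{-1}\int_Q aL\,(p^i_x)^2$, while the coupling contributions $\mu_i^{-1}\int_Q p^i y$ are absorbed via Young's inequality and the Hardy–Poincaré inequality valid in $H_a^1(0,1)$ under \eqref{conditionunder-a(x)}–\eqref{K-estimate-a(x)}. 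This gives $\|(y,p^1,p^2)\|_\W \le C(\|y_0\|_{H_a^1}+\|h\|+\sum_i\|y_{i,d}\|)$ with $C$ depending on $\alpha_i$, $\|\ell\|_{C^1}$ and the degeneracy constant $K$ but not on $\mu_i$, because the $\mu_i$-weights appear with matched powers in the dissipation and in the cross terms.

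Next I would verify that $\Phi$ maps $B_R$ into itself and is a contraction for $R$ and the data bound $\sigma$ small. The only genuinely nonlinear term is the degenerate-weighted nonlocal coupling $M(t)\int_0^1 a\,\bar y_x\, p^i_x\,dx$; since $\|\ell'\|_\infty<\infty$ and, by Cauchy–Schwarz, $\left|\int_0^1 a\,\bar y_x\, p^i_x\,dx\right| \le \|\sqrt a\,\bar y_x\|_{L^2}\,\|\sqrt a\,p^i_x\|_{L^2} \le \|\bar y\|_{H_a^1}\|p^i\|_{H_a^1}$, this term is quadratic in the unknowns and bounded in $L^2(0,T;L^2)$ by $C\|\bar y\|_{\W}\|p^i\|_{\W} \le CR\,\|p^i\|_\W$, hence absorbable for $R$ small. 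Combining with the linear estimate yields $\|\Phi(\bar z)\|_\W \le C\sigma + CR^2$, so $B_R$ is invariant provided $R$ is small and $\sigma \le R/(2C)$. For the contraction I would subtract the systems associated with $\bar z,\tilde z\in B_R$; the frozen-coefficient differences obey $\|L-\tilde L\|_\infty + \|M-\tilde M\|_\infty \le C\|\bar y-\tilde y\|_\W$ by the $C^2$, bounded-derivative hypotheses on $\ell$, and every coefficient or coupling difference carries a factor $R$ (from the quadratic term) or a controllable small factor, giving $\|\Phi(\bar z)-\Phi(\tilde z)\|_\W \le CR\,\|\bar z-\tilde z\|_\W$ with $CR<1$. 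Uniqueness in the ball is then immediate, and the quantitative estimate in the statement is the self-map bound specialised to the fixed point.

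The main obstacle is the $\mu$-uniform a priori estimate for the coupled linear system. Iterating the two scalar solution operators naively produces the self-referential factor $C^2\sum_i \alpha_i\mu_i^{-1}$, which is not small in general and destroys uniformity in $\mu_i$; the estimate must instead be read off from a single energy identity for the full forward–backward pair, in which the $\mu_i^{-1}$ weights occur simultaneously in the dissipation and in the cross terms, so that Hardy–Poincaré absorbs the cross terms with matched $\mu_i$-powers. A secondary technical point is that the nonlocal coupling lives in the degenerate-weighted space and must be measured against $\|\cdot\|_{H_a^1}$ rather than $H^1$; tracking the weight $\sqrt a$ throughout is exactly what makes the quadratic term controllable and, ultimately, renders $\Phi$ a contraction.
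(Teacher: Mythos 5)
Your strategy (freeze the nonlocal coefficient $\ell\!\left(\int_0^1\bar y\,dx\right)$, solve a linear coupled system, then contract) is genuinely different from the paper's: the paper runs a Faedo--Galerkin scheme on the full nonlinear system using the eigenbasis of $u\mapsto -(a u_x)_x$, derives three energy estimates (testing with the solution, with its time derivative, and with $-(a u_x)_x$), keeps the approximants in the region $\sum_i\|\varphi^i_m(t)\|^2<\tfrac12$ by a continuation/contradiction argument --- which is exactly where the smallness $\sigma$ enters, since the cubic term $\ell'(\cdot)\bigl(\int_0^1 p^i\bigr)\int_0^1 a\,y_x p^i_x\,dx$ can only be absorbed when $\|p^i\|$ is small --- and concludes by Aubin--Lions compactness. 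Your route, if completed, would be cleaner, but as written it has two genuine gaps.

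First and most seriously: your frozen inner problem is still a \emph{coupled forward--backward} linear system --- $y$ runs forward from $y_0$, each $p^i$ runs backward from $p^i(\cdot,T)=0$, and they remain coupled through $-\mu_i^{-1}p^i\mathds{1}_{\mathcal{O}_i}$ and $\alpha_i y\mathds{1}_{\mathcal{O}_{i,d}}$. Scalar maximal regularity for each equation separately does not give existence for the coupled pair, and the energy identity you propose is only an a priori estimate: it yields uniqueness and the bound, not existence. Two-point-in-time linear systems of this kind need their own constructive mechanism (e.g.\ a Galerkin reduction to a linear two-point boundary value problem for ODEs, solvable because the finite-dimensional homogeneous problem has only the trivial solution by the same energy identity; or a secondary fixed point exploiting smallness of $1/\mu_i$ or of $T$). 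Since this well-posedness is invoked at every iteration of your outer contraction, its absence is a real hole, not a routine omission.

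Second, the claimed $\mu$-uniform cancellation does not quite close. Testing the backward equation with $(\mu_i\alpha_i)^{-1}p^i$ produces cross terms $\mu_i^{-1}\int p^i y$ over $\mathcal{O}_i\times(0,T)$ and over $\mathcal{O}_{i,d}\times(0,T)$; these sets differ, so the terms do not cancel and must each be absorbed. Young plus Hardy--Poincar\'e absorbs $\mu_i^{-1}\|p^i\|\,\|y\|$ into the dissipations $\kappa\|y\|^2$ and $(\mu_i\alpha_i)^{-1}\kappa\|p^i\|^2$ only when $\mu_i\geq c\,\alpha_i/\kappa^2$, so the constant is uniform for $\mu_i$ above a threshold rather than for all $\mu_i>0$. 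This is harmless for the intended application (Proposition \ref{thm:nash_equilibrium} takes $\mu_i$ large) but should be stated. Your treatment of the genuinely nonlinear term $M(t)\int_0^1 a\,\bar y_x p^i_x\,dx$ and the contraction estimates are sound in spirit, modulo the first gap.
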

The proof of this result is given in Appendix \ref{appendix C}.

\subsection{Equilibrium and quasi-equilibrium}
As already mentioned, in our case the convexity of the functionals $J_{i}$ are not guaranteed. Consequently, the existing definition of Nash equilibrium may not be appropriate for linear scenarios. A re-evaluation and redefinition of Nash optimality are therefore warranted.
\label{subsec:proof_prop_nash_eq}
\begin{proof}[Proof of Proposition \ref{thm:nash_equilibrium}]
We will prove that the pair $(v^{1}, v^{2})$ is a Nash equilibrium. For this, we will prove that this pair satisfies \eqref{minimousfunctional}. That is, having already obtained the critical point $(v^{1}, v^{2})$ for the functionals $J_1$ and $J_2$, we will now use the second derivative test to prove that it is a minimum point for $J_1$ and $J_2$.

Since $(v^{1},v^{2})\in\mathcal{V}$ is a Nash quasi-equilibrium, we have 
\begin{equation}\label{nas-def.q.e}
\left\{\begin{aligned}
&\alpha_i \int_{\mathcal{O}_{i,d}\times(0,T)} (y-y_{i,d})\omega^i \ dxdt + \mu_i \int_{\mathcal{O}_{i} \times (0,T)} \hat v^i v^i \ dxdt= 0\\
&\forall\ \hat{v}^{i}\in \mathcal{V}_{i},\ \ \ v^{i}\in \mathcal{V}_{i}, \ \ \ i=\lbrace1,2\rbrace,
\end{aligned}
\right.
\end{equation}
where $\omega^i$ satisfies
\begin{equation}\label{eqomega}
		\left\{\begin{aligned}
			&\omega_t^i - \left( a(x) \left[ \ell\left(\int_0^1 y\, dx\right) \omega_x^i   - \ell'\left(\int_0^1 y\, dx\right) \int_0^1 \omega^i \, dx \ y_{x} \right] \right)_x=\hat v^1 \mathds{1}_{\mathcal{O}_{i}} &&\text{in}&& Q,\\
			&\omega^{i}(0,t)=\omega^{i}(1,t)=0 &&\text{on} && (0,T), \\
			&\omega^{i}(\cdot,0)=0 &&\text{in} && (0,1).
		\end{aligned}
		\right.
\end{equation}
Given $\varepsilon\in\mathbb{R}$ and $\bar{v}^1,\tilde{v}^{1}\in\mathcal{V}_{1}$, consider a small perturbation of \eqref{nas-def.q.e} around $v^{1}$ and denote by $P(\varepsilon)=  J'_1(h,v^1 + \varepsilon \bar{v}^1,v^2)\cdot(\tilde{v}^{1},0)$ such that
$$
  J'_1(h,v^1 + \varepsilon \bar{v}^1,v^2)\cdot(\tilde{v}^{1},0) = \alpha_1 \int_{\mathcal{O}_{1,d}\times (0,T)} (y^\varepsilon - y_{1,d})q^\varepsilon\ dxdt + \mu_1 \int_{\mathcal{O}_{1} \times (0,T)} (v^1 +\varepsilon\bar{v}^1)\tilde{v}^{1}\ dxdt, 
$$
where $y^{\varepsilon}$ is solution to
\begin{equation}\label{equationylambda}
		\left\{\begin{aligned}
			&y^{\varepsilon}_t - \left(a(x)\ell\left(\int_0^1 y^{\varepsilon} dx\right) y^{\varepsilon}_x\right)_x = h \mathds{1}_{\mathcal{O}} + (v^1+\varepsilon\bar{v}^{1}) \mathds{1}_{\mathcal{O}_1} + v^2 \mathds{1}_{\mathcal{O}_2} &&\text{in}&& Q,\\
			&y^\varepsilon(0,t)=y^\varepsilon(1,t)=0 &&\text{on} && (0,T), \\
			&y^\varepsilon(\cdot,0)=0 &&\text{in} && (0,1),
		\end{aligned}
		\right.
\end{equation}
and $q^\varepsilon$ the derivative of the state $y^{\varepsilon}$ with respect to $v^{1}$ in the direction $\tilde{v}^{1}$, i.e. the solution to 
\begin{equation}\label{eq:hat_y_lambda_PDE}
		\left\{\begin{aligned}
			&q_t^\varepsilon - \left( a(x) \left[ \ell\left(\int_0^1 y^\varepsilon dx\right) q_x^\varepsilon + \ell'\left(\int_0^1 y^\varepsilon dx\right) \int_0^1 q^\varepsilon dx \ y_x^\varepsilon \right] \right)_x=\tilde{v}^{1} \mathds{1}_{\mathcal{O}_1} &&\text{in}&& Q,\\
			&q^\varepsilon(0,t)=q^\varepsilon(1,t)=0 &&\text{on} && (0,T), \\
			&q^\varepsilon(\cdot,0)=0 &&\text{in} && (0,1),
		\end{aligned}
		\right.
\end{equation}
where we denote $y = y^\varepsilon|_{\varepsilon=0}$ and $q = q^\varepsilon|_{\varepsilon=0}$. 

The adjoint system of \eqref{eq:hat_y_lambda_PDE} is
\begin{equation}
  \label{eq:adjoint_of_hat_y_lambda}
		\left\{\begin{array}{lll}
			-\phi_t^\varepsilon - \left( a(x) \ell\left(\displaystyle\int_0^1 y^\varepsilon dx\right) \phi_x^\varepsilon \right)_{x}\\ + \ell'\left(\displaystyle\int_0^1 y^\varepsilon dx\right) \displaystyle\int_0^1 \ ( a(x) \ y_{x}^\varepsilon \ \phi_{x}^\varepsilon ) dx = \alpha_1 (y^\varepsilon-y_{1,d}) \mathds{1}_{\mathcal{O}_{1,d}} &\text{in}& Q,\\
			\phi^\varepsilon(0,t)=\phi^\varepsilon(1,t)=0 &\text{on} & (0,T), \\
			\phi^\varepsilon(\cdot,T)=0 &\text{in} & (0,1),
		\end{array}
		\right.
\end{equation}
where we denoting $\phi^{\varepsilon} = \phi^\varepsilon|_{\varepsilon=0}$.
Multiplying \eqref{eq:hat_y_lambda_PDE} by $\phi^\varepsilon$ and integrating over $Q$ we get
$$
\int_Q \left( q_t^\varepsilon - \left( a(x) \left[ \ell\left(\int_0^1 y^\varepsilon dx\right) q_x^\varepsilon + \ell'\left(\int_0^1 y^\varepsilon dx\right) \int_0^1 q^\varepsilon dx \ y_x^\varepsilon \right] \right)_x \right) \phi^\varepsilon = \int_Q \tilde{v}^{1} \mathds{1}_{\mathcal{O}_{1}} \phi^\varepsilon.
$$
Integrating by parts, and using \eqref{eq:adjoint_of_hat_y_lambda} we obtain 
$$
\alpha_1 \int_Q (y^\varepsilon-y_{1,d}) \mathds{1}_{\mathcal{O}_{1,d}} q^\varepsilon \ dxdt = \int_Q \tilde{v}^{1} \mathds{1}_{\mathcal{O}_1} \phi^\varepsilon \ dxdt, 
$$
and for $\varepsilon=0$,
$$
\alpha_1 \int_Q (y-y_{1,d}) \mathds{1}_{\mathcal{O}_{1,d}} q \ dxdt = \int_Q \tilde{v}^{1} \mathds{1}_{\mathcal{O}_{1}} \phi \ dxdt.
$$
Using the systems above we can consider
\begin{eqnarray}
\label{eq:DJ1_lambda-DJ1}
 P(\varepsilon)-P(0)&=&\alpha_1 \int_{\mathcal{O}_{1,d}\times (0,T)} (y^\varepsilon - y_{1,d}) q^\varepsilon \ dxdt-\alpha_1 \int_{\mathcal{O}_{1,d}\times (0,T)} (y - y_{1,d}) q \ dxdt\nonumber\\ &\quad&+\varepsilon \mu_1 \int_{\mathcal{O}_{1}\times (0,T)} \bar{v}^{1}\tilde{v}^{1} \ dxdt.
\end{eqnarray}
Dividing by $\varepsilon$ in \eqref{eq:DJ1_lambda-DJ1} becomes 
\begin{equation}\label{p1-p0lambda}
\frac{P(\varepsilon)-P(0)}{\varepsilon}= \int_{\mathcal{O}_{1} \times (0,T)} \left(\frac{\phi^\varepsilon - \phi}{\varepsilon}\right)\tilde{v}^{1} \ dxdt + \mu_1 \int_{\mathcal{O}_{1} \times (0,T)} \bar{v}^{1} \ \tilde{v}^{1} \ dxdt.
\end{equation}
Notice that, using the system (\ref{eq:adjoint_of_hat_y_lambda})
\begin{eqnarray}\label{differencebetweensystem1}
&\quad&-(\phi^{\varepsilon}-\phi)_{t} - \left( a(x) \ell\left(\int_0^1 y^\varepsilon dx\right) \phi^\varepsilon_{x} - a(x) \ell\left(\int_0^1 y \,dx\right) \phi_x \right)_x\\
&\quad&+\ell'\left(\int_0^1 y^\varepsilon dx\right) \int_0^1 ( a(x) y_{x}^\varepsilon  \phi_{x}^\varepsilon) dx- \ell'\left(\int_0^1 y \,dx\right) \int_0^1 ( a(x) y_{x}  \phi_{x}) dx  = \alpha_1 (y^\varepsilon - y) \mathds{1}_{\mathcal{O}_{1,d}},\nonumber
\end{eqnarray}
and using the system (\ref{equationylambda}) we obtain
\begin{equation}\label{differencebetweensystem2}
(y^{\varepsilon}-y)_{t}-\left( a(x) \ell\left(\int_0^1 y^\varepsilon dx\right) y_x^\varepsilon - a(x) \ell\left(\int_0^1 y\, dx\right) y_x \right)_x = \varepsilon\bar{v}^{1} \mathds{1}_{\mathcal{O}_1}.
\end{equation}
Consider the limits
\begin{equation}
\eta=\lim_{\varepsilon\to 0} \frac{1}{\varepsilon}(\phi^\varepsilon - \phi) \ \text{and} \ \theta=\lim_{\varepsilon\to 0} \frac{1}{\varepsilon}(y^\varepsilon - y).
\end{equation}
Dividing (\ref{differencebetweensystem1}) and (\ref{differencebetweensystem2}) by $\varepsilon$ and making $\varepsilon \to 0$ we can deduce the following system 
\begin{equation}
\label{eq:accoplatesystem}
		\left\{\begin{aligned}
			&-\eta_t - \ell\left(\int_0^1 y \,dx\right) \left( a(x) \eta_x \right)_x - \ell'\left(\int_0^1 y\, dx\right) \left( \int_0^1 \theta\, dx \right) \ (a(x) \phi_{x})_{x}\\& + \ell'\left(\int_0^1 y \,dx\right) \left[ \int_0^1 a(x) y_{x} \eta_{x} \,dx + \int_0^1 a(x) \theta_{x} \phi_{x}\, dx \right]\\ & + \ell''\left(\int_0^1 y \,dx\right) \left( \int_0^1 \theta \,dx \right) \int_0^1 a(x)y_{x} \phi_{x} \,dx =\alpha_1 \theta \mathds{1}_{\mathcal{O}_{1,d}} &&\text{in}&& Q,\\
            &\theta_t - \left( a(x)\ell\left(\int_0^1 y\, dx\right) \theta_x + a(x) \ell'\left(\int_0^1 y\, dx\right) \left( \int_0^1 \theta \,dx \right) y_x \right )_x= \bar{v}^{1} \mathds{1}_{\mathcal{O}_1}&&\text{in}&& Q,\\ 
			&\eta(0,t)=\eta(1,t)=0,\ \theta(0,t)=\theta(1,t)=0 &&\text{on} && (0,T), \\
			&\eta(\cdot,T)=0, \ \theta(\cdot,0)=0 &&\text{in} && (0,1).
		\end{aligned}
		\right.
\end{equation}
Then, when $\varepsilon \to 0$ in (\ref{p1-p0lambda}), we obtain the second derivative
\begin{equation}
\label{eq:D2J1_w1_w2}
 J''_1(h,v^1,v^2)\cdot (\bar{v}^{1}, \tilde{v}^{1})  = \int_{\mathcal{O}_{1}\times (0,T)} \eta \tilde{v}^{1} \, dxdt + \mu_1 \int_{\mathcal{O}_{1}\times (0,T)} \bar{v}^{1}\tilde{v}^{1} \, dxdt, \ \forall\bar{v}^{1},\tilde{v}^{1} \in \mathcal{V}_{1}. 
\end{equation}
In particular, taking $\bar{v}^{1}=\tilde{v}^{1}$
\begin{equation}
 J''_1(h,v^1,v^2)\cdot(\bar{v}^{1}, \bar{v}^{1})= \int_{\mathcal{O}_{1} \times (0,T)} \eta \bar{v}^{1} \, dxdt + \mu_1 \int_{\mathcal{O}_{1} \times (0,T)} |\bar{v}^{1}|^{2} \, dxdt. 
\end{equation}
Let us prove that there exists a constant $C> 0$ such that
\begin{equation}
\label{eq:integral_eta_w1_bounded}
\left|\int_{\mathcal{O}_1 \times (0,T)} \eta \bar{v}^{1} \ dxdt\right|\leq C(\|h\|_{L^{2}(0,T;L^{2}(\mathcal{O}))}+\|y_{0}\|_{{H^{1}_{a}(0,1)}}+1)\|\bar{v}^{1}\|^{2}_{\mathcal{V}_{1}}.
\end{equation}
We also get the following 
\begin{eqnarray*}   
\int_{\mathcal{O}_{1}\times (0,T)} \eta \bar{v}^{1} \, dxdt 
&=&\int_{Q} \eta\left[\theta_t - a(x)\left(\ell\left(\int_0^1 y \,dx\right)\theta_x+ \ell'\left(\int_0^1 y\, dx\right)\left(\int_0^1 \theta\, dx\right)y_{x}\right)_{x} \right] dxdt\\
&=&\int_{Q} \theta\left[- \eta_t - \left(a(x)\ell\left(\int_0^1 y\, dx\right) \eta_x \right)_x + \ell'\left(\int_0^1 y dx\right) \left(\int_0^1 a(x)\eta_{x}y_{x} dx\right) \right] dxdt\\
&=&\int_{Q}\alpha_{1}|\theta|^{2}\mathds{1}_{\mathcal{O}_{1,d}}\, dxdt+\int_{Q} \theta\left(\ell'\left(\int_{0}^{1}y\,dx\right)\left(\int_{0}^{1}\theta\,  dx\right)(a(x)\phi_{x})_{x}\right) dxdt\\
&\quad&-\int_{Q} \theta\left(\ell'\left(\int_{0}^{1}y\,dx\right)\int_{0}^{1}a(x)\theta_{x}\phi_{x}\,dx\right) dxdt\\
&\quad&-\int_{Q} \theta\left(\ell''\left(\int_{0}^{1}y\,dx\right)\left(\int_{0}^{1}\theta \,dx\right)\int_{0}^{1}a(x)y_{x}\phi_{x} \, dx\right)  dxdt.    
\end{eqnarray*}
From (\ref{eq:adjoint_of_hat_y_lambda}) with $\varepsilon = 0$ and (\ref{eq:accoplatesystem}), using the energy estimates of Appendix \ref{appendix C},  considering that $\phi$ verifies the same equation as $p^1$, we have

\begin{equation*}
\|(a\phi_{x})_x\|^{2}_{L^{2}(Q)}+\|\sqrt{a} \phi_{x}\|^{2}_{L^{\infty}(0,T,L^{2}(0,1))}\leq C\left( \|h\|^{2}_{L^{2}((0,T),L^2(\mathcal{O}))}+\|y_{0}\|_{H^{1}_{a}(0,1)}+\sum_{i=1}^{2}\|y_{i,d}\|^{2}_{L^{2}((0,T),L^2(\mathcal{O}_{i,d}))} \right),
\end{equation*}
\begin{equation*}
\|(ay_{x})_x\|^{2}_{L^{2}(Q)}+\|\sqrt{a} y_{x}\|^{2}_{L^{\infty}(0,T,L^{2}(0,1))}\leq C\left( \|h\|^{2}_{L^{2}((0,T),L^2(\mathcal{O}))}+\|y_{0}\|_{H^{1}_{a}(0,1)}+\sum_{i=1}^{2}\|y_{i,d}\|^{2}_{L^{2}((0,T),L^2(\mathcal{O}_{i,d}))} \right),
\end{equation*}
and analogous energy estimates for $\theta$, 
\begin{equation*}
\|\sqrt{a} \theta_{x}\|^{2}_{L^{2}(Q)}+\|\theta\|^{2}_{L^{\infty}(0,T,L^{2}(0,1))}\leq C\left(\|\bar{v}^{1}\|^{2}_{L^{2}(\mathcal{O}_{1,d}\times(0,T))}\right).
\end{equation*}
Thus,
\begin{eqnarray*}
\left|\int_{\mathcal{O}_1 \times (0,T)} \eta \tilde{v}^{1} \, dxdt\right|&\leq& C\left(\int_{0}^{T}\|\sqrt{a}\phi_{x}(t)\|\ \|\sqrt{a} y_{x}(t)\| \|\theta(t)\|^{2} dt \|\sqrt{a} \phi_{x}(t)\| dt \right. \\
&\quad& \left.+\int_{0}^{T} \|\sqrt{a} \theta_{x}(t)\|\|\theta(t)\|+\int_{0}^{T}\|\theta(t)\|^{2}dt\right)\\
&\leq&C\left(\|\sqrt{a} \phi_{x}\|_{L^{\infty}(0,T,L^{2}(0,1))}+\|\sqrt{a} \phi_{x}\|_{L^{\infty}(0,T,L^{2}(0,1))} \|\sqrt{a} y_{x}\|_{L^{\infty}(0,T,L^{2}(0,1))}\right)\\ 
&\quad&\times\int_0^T \left(\|\theta\|^2_{L^{2}(0,1)}+\|\sqrt{a} \theta_{x}\|^2_{L^{2}(0,1)}\right)dt\\
&\leq&C\left(\|h\|_{L^{2}(0,T;L^{2}(\mathcal{O}))}+\|y_{0}\|_{H^{1}_{a}(0,1)}^{2} +\sum_{i=1}^{2}\|y_{i,d}\|^{2}_{L^{2}((0,T),L^2(\mathcal{O}_{i,d}))} +1 \right)\| \bar{v}^{1}\|^{2}_{\mathcal{V}_{1}}.
\end{eqnarray*}
To finish the proof, we have that \eqref{eq:integral_eta_w1_bounded} implies
$$
J''_1(h,v^1,v^2)\cdot(\bar{v}^{1}, \bar{v}^{1})\geq (\mu_1 - C) \int_{\mathcal{O}_1 \times (0,T)} |\bar{v}^{1}|^2 \ dxdt.
$$
Taking $\mu_1$ big enough, this shows that $J_1$ is convex. Analogously, by a similar computation, taking $\mu_2$ big enough, we have that $J_2$ is convex. Thus the Nash quasi-equilibrium par $(v^1,v^2)$ is in fact a Nash equilibrium.
\end{proof}

\section{Null controllability of the linearized system}
\label{Sec:null_linearized_system}
\subsection{Preliminaries}
\label{subsec:prelim}
As a first step we study controllability of the linearized system about the zero solution.

Recall the optimality system \eqref{eq:optimality_system}. We compute formally the derivative a zero of the map
$T(y,p^1,p^2) = (T_0(y,p^1,p^2),T_1(y,p^1,p^2),T_2(y,p^1,p^2))$, where
\begin{equation*}
T_0(y,p^1,p^2) = y_t - \left(a(x)\ell\left(\int_0^1 y \, dx\right) y_x\right)_x +\ \frac{1}{\mu_1} p^1 \mathds{1}_{\mathcal{O}_{1}} + \frac{1}{\mu_2} p^2 \mathds{1}_{\mathcal{O}_{2}},    
\end{equation*}
and, for $i=\lbrace1,2\rbrace$,
\begin{equation*}
T_i(y,p^1,p^2) = - p_t^i - \left( a(x) \ell\left(\int_0^1 y \, dx\right) p_x^i \right)_x  + \ \ell'\left(\int_0^1 y\, dx\right) \int_0^1 ( a(x) y_x \ p^i_x)\, dx - \alpha_i y \mathds{1}_{\mathcal{O}_{i,d}}.    
\end{equation*}
Thus, 
\begin{equation*}
DT(0,0,0)\cdot(\tilde y,\tilde p^1, \tilde p^2) = \lim_{\lambda \to 0} \frac{T(\lambda(\tilde y,\tilde p^1,\tilde p^2))-T(0,0,0)}{\lambda} = \lim_{\lambda \to 0} \frac{T(\lambda(\tilde y,\tilde p^1,\tilde p^2))}{\lambda}    
\end{equation*}
Forgetting the tilde in the notations, 
we get the linearized system, for $i=\lbrace 1,2\rbrace$,
\begin{equation}\label{eq:linearized_system}
\left\{\begin{aligned}
&y_t - (\ell(0) a(x) y_x)_x = h \mathds{1}_{\mathcal{O}} -\frac{1}{\mu_1} p^1 \mathds{1}_{\mathcal{O}_{1}} - \frac{1}{\mu_2}p^2 \mathds{1}_{\mathcal{O}_{2}} + H &&\text{in}&& Q, \\
&-p_t^i - \left(\ell(0) a(x) p_x^i \right)_x  = \alpha_i y \mathds{1}_{O_{i,d}} + H_i  &&\text{in}&& Q, \\
&y(0,t)=y(1,t)=0,\ \ p^i(0,t) = p^i(1,t) = 0 &&\text{on}&& (0,T), \\
&y(\cdot,0) = y_{0},\ \ p^i(\cdot,T) = 0  &&\text{in}&& (0,1),
\end{aligned}
\right.
\end{equation}
with associated adjoint system
\begin{equation}
\label{eq:adjoint_optimality_system}
\left\{\begin{aligned}
&-\phi_t - (\ell(0) a(x) \phi_x)_x = F + \alpha_1 \psi^1 \mathds{1}_{\mathcal{O}_{1,d}} + \alpha_2 \psi^2 \mathds{1}_{\mathcal{O}_{2,d}} && \text{in }&& Q, \\
&\psi^i_t - (\ell(0) a(x) \psi^i_x)_x = F_i -\frac{1}{\mu_i} \phi \mathds{1}_{\mathcal{O}_{i}}  && \text{in} && Q, \\
&\phi(0,t)=\phi(1,t)=0, \ \ \psi^i(0,t)=\psi^i(1,t)=0  && \text{on} && (0,T), \\
&\phi(\cdot,T) = \phi^T, \ \ \psi^i(\cdot,0) = 0 && \text{in} && (0,1), \\
\end{aligned}
\right.
\end{equation}
where $\phi^T \in L^2(0,1)$. To simplify the system, we assume the condition
\begin{equation}
\label{eq:condition_O1d_O2d}
\mathcal{O}_{1,d} = \mathcal{O}_{2,d} = \mathcal{O}_{d}.
\end{equation}
Introducing the new variable $\varrho = \alpha_1 \psi^1 + \alpha_2 \psi^2$, the adjoint system becomes 
\begin{equation}
\label{eq:adjoint_optimality_system2}
\left\{\begin{aligned}
&-\phi_t - (\ell(0) a(x) \phi_x)_x = \varrho \mathds{1}_{\mathcal{O}_d} + G_0 && \text{in} && Q, \\
&\varrho_t - (\ell(0) a(x) \varrho_x)_x = - \left( \frac{\alpha_1}{\mu_1}\mathds{1}_{\mathcal{O}_1} + \frac{\alpha_2}{\mu_2} \mathds{1}_{\mathcal{O}_2} \right)\phi + \tilde{G}  && \text{in } && Q, \\
&\phi(0,t)=\phi(1,t)=0, \ \ \varrho(0,t)=\varrho(1,t)=0 && \text{on} && (0,T), \\
&\phi(\cdot,T)=\phi^T \ \ \varrho(\cdot,0) = 0 && \text{in} && (0,1), 
\end{aligned}
\right.
\end{equation}
where $\phi^T \in L^2(0,1)$ and $\tilde{G}=\alpha_{1}F_{1}+\alpha_{2}F_{2}$.

The following result presents a  well-posedness for the linear system \eqref{eq:linearized_system}. This will be used later in the definition of our admissible state space $\mathcal{Y}$, defined in Section \ref{control for nonlinear system}.

\begin{proposition}\label{Regularidade para linear system}
For all $H, H_{1}, H_{2}\in L^{2}(Q)$ and $y_{0}\in L^{2}(0,1)$, there exists a unique weak solution $y, p^{1}, p^{2}\in C^{0}([0,T];L^{2}(0,1))\cap L^{2}(0,T;H^{1}_{a}(0,1))$ of \eqref{eq:linearized_system} and there exists a positive constant $C=C(T)$ such that 
\begin{equation*}
    \begin{array}{l}
\displaystyle\sup_{t\in[0,T]}\left(\|y(t)\|^{2}_{L^{2}(0,1)} + \|p^{1}(t)\|^{2}_{L^{2}(0,1)} + \|p^{2}(t)\|^{2}_{L^{2}(0,1)}\right) \vspace{0.1cm}\\
        + \displaystyle\int_{0}^{T}\left(\|\sqrt{a}y_{x}\|^{2}_{L^{2}(0,1)} + \|\sqrt{a}p^{1}_{x}\|^{2}_{L^{2}(0,1)} + \|\sqrt{a}p^{2}_{x}\|^{2}_{L^{2}(0,1)} \right) dt\\
        \leq C\left(\|y_{0}\|^{2}_{L^{2}(0,1)} + \|H\|_{L^{2}(Q)} + \|H_{1}\|_{L^{2}(Q)} + \|H_{2}\|_{L^{2}(Q)}  \right).
    \end{array}
\end{equation*}
Moreover, if $y_{0}\in H^{1}_{a}(0,1)$, then 
\begin{equation*}
    y, p^{1}, p^{2}\in H^{1}(0,T; L^{2}(0,1))\cap L^{2}(0,T;H^{2}_{a}(0,1))\cap C^{0}([0,T];H^{1}_{a}(0,1)),
\end{equation*}
and there exists a positive constant $C=C(T)$ such that 
\begin{equation*}
    \begin{array}{l}
\displaystyle\sup_{[0,T]}\left( \|y(t)\|^{2}_{H^{1}_{a}(0,1)} + \|p^{1}(t)\|^{2}_{H^{1}_{a}(0,1)} + \|p^{2}(t)\|^{2}_{H^{1}_{a}(0,1)}\right)\\
+ \displaystyle\int_{0}^{T}\left( \|y_{t}\|^{2}_{L^{2}(0,1)} + \|p^{1}_{t}\|^{2}_{L^{2}(0,1)}  + \|p^{2}_{t}\|^{2}_{L^{2}(0,1)} \right)dt\\
+ \displaystyle\int_{0}^{T}\left(\|(ay_{x})_{x}\|^{2}_{L^{2}(0,1)} + \|(ap^{1}_{x})_{x}\|^{2}_{L^{2}(0,1)}+\|(ap^{2}_{x})_{x}\|^{2}_{L^{2}(0,1)}  \right)dt\\
\leq C\left( \|y_{0}\|^{2}_{H^{1}_{a}(0,1)} + \|H\|_{L^{2}(Q)} + \|H_{1}\|_{L^{2}(Q)}+\|H_{2}\|_{L^{2}(Q)}  \right).
    \end{array}
\end{equation*}
\end{proposition}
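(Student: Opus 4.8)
The plan is to reduce the coupled forward--backward system \eqref{eq:linearized_system} to a family of \emph{scalar} degenerate parabolic Cauchy problems, for which well-posedness and both families of energy estimates are classical, and then to recouple the components by a fixed-point/energy argument. As a preliminary I would record the scalar theory for the operator $Au=-(\ell(0)a(x)u_x)_x$ with homogeneous Dirichlet conditions. Under \eqref{conditionunder-a(x)}--\eqref{K-estimate-a(x)} the bilinear form $u\mapsto\ell(0)\|\sqrt a\,u_x\|_{L^2(0,1)}^2$ is coercive on $H^1_a(0,1)$ (weighted Poincaré/Hardy inequality, valid for $K\in[0,1)$), so $A$ is non-negative self-adjoint with domain $H^2_a(0,1)$ and generates an analytic semigroup. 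Hence the scalar problem $u_t+Au=f$, $u(0)=u_0$, is well posed: for $f\in L^2(Q)$ and $u_0\in L^2(0,1)$ there is a unique $u\in C^0([0,T];L^2)\cap L^2(0,T;H^1_a)$ satisfying the first-type estimate, and for $u_0\in H^1_a$ this improves to $H^1(0,T;L^2)\cap L^2(0,T;H^2_a)\cap C^0([0,T];H^1_a)$ with the second-type estimate. After the change $t\mapsto T-t$ the backward equations for $p^1,p^2$ (with $p^i(\cdot,T)=0$) fall into exactly this framework. The leader term $h\,\mathds 1_{\mathcal O}$ may be absorbed into the source $H$, so I take $h=0$ below.

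For existence and uniqueness I would decouple the system. Define a map $\Lambda$ on $L^2(Q)$ by: given $z$, first solve the two backward scalar problems
\[
-p^i_t-(\ell(0)a\,p^i_x)_x=\alpha_i\,z\,\mathds 1_{\mathcal O_{i,d}}+H_i,\qquad p^i(\cdot,T)=0,
\]
obtaining $p^i=p^i[z]$, and then solve the forward scalar problem
\[
y_t-(\ell(0)a\,y_x)_x=-\tfrac{1}{\mu_1}p^1[z]\,\mathds 1_{\mathcal O_1}-\tfrac{1}{\mu_2}p^2[z]\,\mathds 1_{\mathcal O_2}+H,\qquad y(\cdot,0)=y_0,
\]
setting $\Lambda(z)=y$. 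By the scalar theory $\Lambda$ is a well-defined affine map, and the compact embedding $H^1_a(0,1)\hookrightarrow\hookrightarrow L^2(0,1)$ makes its linear part compact; a fixed point $y=\Lambda(y)$ is precisely a solution of \eqref{eq:linearized_system}. Existence then follows from a Leray--Schauder argument once a uniform a priori bound on solutions is available, and uniqueness follows by applying that same bound to the difference of two solutions (homogeneous data).

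The a priori bound is obtained by the energy method. I would test the $y$-equation with $y$ and each $p^i$-equation with $p^i$, integrate in space, and use coercivity to produce the dissipation terms $\ell(0)\|\sqrt a\,y_x\|^2$ and $\ell(0)\|\sqrt a\,p^i_x\|^2$; the boundary contributions at $x=0$ generated by the integrations by parts vanish thanks to \eqref{K-estimate-a(x)}. Integrating the (forward) $y$-identity on $(0,t)$ and the (backward) $p^i$-identities on $(t,T)$, bounding the coupling integrals by Young's inequality and the weighted Poincaré inequality, and closing with Gronwall's lemma yields the first displayed estimate, with $C=C(T)$. The second, higher-regularity estimate follows by a bootstrap: once $y,p^1,p^2\in L^2(0,T;H^1_a)$, the right-hand sides of all three equations belong to $L^2(Q)$, so, for $y_0\in H^1_a$, the second-type scalar estimate upgrades each component to $H^1(0,T;L^2)\cap L^2(0,T;H^2_a)\cap C^0([0,T];H^1_a)$; summing the scalar bounds and absorbing the coupling via Gronwall gives the second displayed inequality.

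The main obstacle is the interaction of two features: the \emph{forward--backward coupling}, which precludes a single monotone-in-time energy identity and forces either the decoupling map above or a careful two-sided Gronwall argument, and the \emph{degeneracy} at $x=0$, which confines every estimate to the weighted spaces $H^1_a,H^2_a$. Coercivity is available only for $\|\sqrt a\,u_x\|_{L^2}$, so all lower-order and coupling terms must be controlled through the weighted Poincaré/Hardy inequality, and the boundary terms at the degenerate endpoint must be shown to vanish using the weak-degeneracy hypothesis $K\in[0,1)$; the fact that the a priori bound closes for every $T$ ultimately rests on the favorable sign of the coupling terms $-\tfrac{1}{\mu_i}p^i$ and $+\alpha_i y$ inherited from the underlying convex tracking structure.
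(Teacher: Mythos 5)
The paper does not actually prove this proposition: it cites Proposition~2.1 of \cite{FadiliManiar}. Your attempt at a self-contained proof is therefore welcome in principle, and the preliminary scalar theory (coercivity of $u\mapsto\ell(0)\|\sqrt a\,u_x\|^2$ on $H^1_a$, analytic semigroup, the two regularity levels, the time reversal for $p^i$) is fine. The gap is in the a priori estimate, which is also the step on which your uniqueness and your Leray--Schauder/Fredholm existence both rest. Testing the $y$-equation with $y$ and each $p^i$-equation with $p^i$ gives a \emph{forward} differential inequality for $\|y(t)\|^2$ (initial datum at $t=0$) and a \emph{backward} one for $\|p^i(t)\|^2$ (terminal datum at $t=T$). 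Integrating the first over $(0,t)$, the second over $(t,T)$, and bounding the coupling terms by Young's inequality, the best you obtain for $E(t)=\|y(t)\|^2+\sum_i\|p^i(t)\|^2$ is
\[
\sup_{[0,T]}E \;\le\; \|y_0\|^2 + \text{(data)} + C\,T\,\sup_{[0,T]}E,
\]
because the two integrals together cover the whole of $[0,T]$ rather than only $[0,t]$. This absorbs only when $CT<1$; Gronwall does not apply to a quantity whose two pieces evolve in opposite time directions, and the usual continuation-on-subintervals trick fails for forward--backward systems since $p^i$ is not known at intermediate times. So your argument proves the estimate (and hence uniqueness, and hence existence via the Fredholm alternative) only for small $T$, whereas the proposition is claimed for every $T$.

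The "favorable sign of the coupling" you invoke does not rescue this as stated: the cross terms are $-\tfrac{1}{\mu_i}\int_{\mathcal O_i}p^iy$ in the $y$-identity and $+\alpha_i\int_{\mathcal O_{i,d}}yp^i$ in the $p^i$-identity, which neither cancel (the coefficients $1/\mu_i$ versus $\alpha_i$ and the supports $\mathcal O_i$ versus $\mathcal O_{i,d}$ differ) nor have a definite sign; and even if they matched, adding the identities produces $\tfrac{d}{dt}\bigl(\|y\|^2-\sum_i\|p^i\|^2\bigr)$, which is not a usable energy. The structure that actually makes the result true is variational: \eqref{eq:linearized_system} is the optimality system of a linear--quadratic Nash game, and solvability for all $T$ comes from showing that the associated affine operator on $\mathcal V_1\times\mathcal V_2$ (identity plus a compact perturbation built from the solution operators $L_i$ and their adjoints) is invertible --- an argument that exploits the quadratic functionals, e.g.\ the identity $\tfrac{d}{dt}\langle y,p^i\rangle=-\sum_j\tfrac1{\mu_j}\int_{\mathcal O_j}p^jp^i-\alpha_i\int_{\mathcal O_{i,d}}|y|^2+\dots$, and in general requires either $\mathcal O_{1,d}=\mathcal O_{2,d}$, equal $\alpha_i$, or largeness of the $\mu_i$. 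You need either to carry out that argument or, as the authors do, to invoke the reference; the naive energy-plus-Gronwall route does not close.
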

\begin{proof}
    See Proposition 2.1 of \cite{FadiliManiar}. 
\end{proof}
We observe that, by taking $\ell(0) = 1$, \eqref{eq:adjoint_optimality_system2} coincides with the linearized system studied in \cite{djomegne2022hierarchical} (where a semilinear term is also included), together with the corresponding Carleman inequalities and observability result. As a consequence, Proposition 6.1 in \cite{djomegne2022hierarchical} ensures that the linearized system \eqref{eq:linearized_system} is null controllable. However, in order to apply the Liusternik inverse mapping theorem to obtain the controllability of the nonlinear problem, we follow \cite{DemarqueLimacoViana_deg_sys2020} for the choice of Carleman weights and for additional estimates required for the controllability of the linearized system.

\subsection{Carleman estimates}
Following \cite{DemarqueLimacoViana_deg_eq2018, DemarqueLimacoViana_deg_sys2020}, let $\mathcal{O}'=(\alpha',\beta') \subset\subset \mathcal{O}$ and let $\Psi : [0,1] \to \R$ be a $C^2$ function such that
\begin{equation*}
\Psi(x) = 
\begin{cases}
\displaystyle\int_0^x \frac{s}{a(s)} ds, \quad x \in [0,\alpha'), \\
-\displaystyle\int_{\beta'}^x \frac{s}{a(s)} ds, \quad x \in [\beta',1].
\end{cases}    
\end{equation*}
For $\lambda \geq \lambda_0$ define the functions
\begin{equation*}
\left\{\begin{aligned}
&\Theta(t) = \frac{1}{(t(T-t))^4}, \qquad \vartheta(x) = e^{\lambda(|\Psi|_\infty + \Psi)}, \qquad \xi(x,t) = \Theta(t) \vartheta(x) \\ &\varphi(x,t) = \Theta(t) (e^{\lambda(|\Psi|_\infty + \Psi)}-e^{3\lambda|\Psi|_\infty}).    
\end{aligned}
\right.
\end{equation*}

We introduce the notation
\begin{equation*}
I(\varsigma) = \int_Q e^{2s\varphi}((s\lambda)\xi a \varsigma_x^2 + (s\lambda)^2 \xi^2 \varsigma^2)dxdt.
\end{equation*}

The following proposition was proved in \cite{DemarqueLimacoViana_deg_sys2020} for an adjoint system of two retrograde equations. After making the substitution $t \mapsto T-t$ to our second equation in \eqref{eq:adjoint_optimality_system2}, we are in the case treated in \cite{DemarqueLimacoViana_deg_sys2020} and the conclusion remains true after coming back to the original variable.

\begin{proposition} 
There exist positive constants $C$, $\lambda_0$ and $s_0$ such that, for any $s \geq s_0$, $\lambda \geq \lambda_0$ and any $\phi^T \in L^2(Q)$, the corresponding solution $(\phi,\varrho)$ of \eqref{eq:adjoint_optimality_system2} satisfies
$$
I(\phi) + I(\varrho) \leq C \left( \int_Q e^{2s\varphi} s^4 \lambda^4 \xi^4 (|G_0|^2+|\tilde{G}|^2)dxdt + \int_{\mathcal{O}\times(0,T)} e^{2s\varphi} s^8 \lambda^8 \xi^8 \phi^2dxdt \right).
$$
\end{proposition}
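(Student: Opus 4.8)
The plan is to reduce the statement to the coupled degenerate Carleman inequality already established in \cite{DemarqueLimacoViana_deg_sys2020}, which is stated for a system of two \emph{backward} (retrograde) degenerate parabolic equations, by exploiting the forward–backward structure of \eqref{eq:adjoint_optimality_system2} together with the time symmetry of the weights. The first equation for $\phi$ is already retrograde, whereas the second equation for $\varrho$ is forward with $\varrho(\cdot,0)=0$. Setting $\hat\varrho(x,t):=\varrho(x,T-t)$ turns the latter into the retrograde equation $-\hat\varrho_t-(\ell(0)a(x)\hat\varrho_x)_x=-\big(\tfrac{\alpha_1}{\mu_1}\mathds{1}_{\mathcal{O}_1}+\tfrac{\alpha_2}{\mu_2}\mathds{1}_{\mathcal{O}_2}\big)\phi(x,T-t)+\tilde G(x,T-t)$, with zero terminal datum $\hat\varrho(\cdot,T)=0$, homogeneous Dirichlet boundary conditions, and bounded (time-independent) coupling coefficients. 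Thus $(\phi,\hat\varrho)$ fits the structural setting of \cite{DemarqueLimacoViana_deg_sys2020}: the same degenerate principal part $-\partial_t-(\ell(0)a(x)\partial_x)_x$ (here $\ell(0)>0$ is a fixed constant, so the weakly degenerate conditions \eqref{conditionunder-a(x)}--\eqref{K-estimate-a(x)} are untouched), zeroth-order couplings, and $L^2(Q)$ source terms.

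Next I would check that the Carleman data are invariant under the reflection $t\mapsto T-t$. Since $\Theta(t)=1/(t(T-t))^4$ is symmetric about $t=T/2$ and the spatial weights $\vartheta,\xi$ as well as $\varphi$ are built from the $t$-independent function $\Psi$ through $\Theta$, all of $\Theta,\xi,\vartheta,\varphi$ are unchanged by the substitution. Consequently $I(\hat\varrho)=I(\varrho)$, the observation integral over $\mathcal{O}\times(0,T)$ is preserved, and—by the change of variables $t\mapsto T-t$—the weighted $L^2$ norms of the reflected coupling and source terms $M\phi(x,T-t)$ and $\tilde G(x,T-t)$ coincide with those of $M\phi$ and $\tilde G$ (so the fact that the reflected coupling involves $\phi$ at the reflected time is harmless, since only weighted $L^2$ control of the right-hand sides is used). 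Applying the estimate of \cite{DemarqueLimacoViana_deg_sys2020} to $(\phi,\hat\varrho)$ and undoing the reflection then bounds $I(\phi)+I(\varrho)$ by the stated right-hand side, in which only $\phi$ is observed on $\mathcal{O}\times(0,T)$ with the weight $s^8\lambda^8\xi^8$.

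The essential content therefore lies inside the cited result, and the genuine obstacle is the coupled-system mechanism producing an observation of a \emph{single} component with the elevated power $s^8\lambda^8\xi^8$. Carried out directly, one first applies the single-equation degenerate Carleman inequality of \cite{DemarqueLimacoViana_deg_eq2018} to each equation; this leaves a local observation of $\varrho$ that must be removed using the $\phi$-equation to write $\varrho\,\mathds{1}_{\mathcal{O}_d}=-\phi_t-(\ell(0)a(x)\phi_x)_x-G_0$ on a subdomain of $\mathcal{O}_d\cap\mathcal{O}$ (nonempty by hypothesis). Controlling the resulting local integrals of $\phi_t$ and $(\ell(0)a(x)\phi_x)_x$ by $I(\phi)$ requires integration by parts against cutoff functions and the weight $e^{2s\varphi}$, and it is precisely this second-order local estimate that forces the doubling of the powers of $s$, $\lambda$ and $\xi$; the remaining cross terms are then absorbed into the left-hand side by taking $s$ and $\lambda$ large. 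The delicate point is performing these manipulations in the presence of the degeneracy at $x=0$, where the boundary terms generated by the integrations by parts must be shown to vanish thanks to the choice of $\Psi$ and the weakly degenerate structure of $a$; this is exactly what \cite{DemarqueLimacoViana_deg_sys2020} supplies.
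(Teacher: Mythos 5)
Your proposal is correct and follows essentially the same route as the paper: the paper's proof consists precisely of invoking Proposition 3 of \cite{DemarqueLimacoViana_deg_sys2020} after performing the time reflection $t\mapsto T-t$ on the forward equation for $\varrho$ and noting that the weights, being built from the symmetric function $\Theta(t)=1/(t(T-t))^4$, are invariant under this substitution. Your additional sketch of the internal mechanism of the cited coupled Carleman estimate (elimination of the local observation of $\varrho$ via the $\phi$-equation, whence the power $s^8\lambda^8\xi^8$) goes beyond what the paper records but is consistent with it.
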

\begin{proof}
    See Proposition 3 of \cite{DemarqueLimacoViana_deg_sys2020}.
\end{proof}

In order to get the global null controllability of the linearized system, we need a Carleman inequality with weights which do not vanish at $t=0$. Consider the function $m \in C^\infty([0,T])$ satisfying $m(0)>0$,
\begin{equation}
\label{eq:def_m}
m(t) \geq t^4(T-t)^4, \quad t \in (0,T/2], \qquad\qquad m(t) = t^4(T-t)^4, \quad t \in [T/2,T],    
\end{equation}
and define
$$
\tau(t) = \frac{1}{m(t)}, \qquad \zeta(x,t) = \tau(t) \vartheta(x), \qquad A(x,t)  = \tau(t) (e^{\lambda(|\Psi|_\infty + \Psi)}-e^{3\lambda|\Psi|_\infty}).
$$
Define
$$
\Gamma_0(\phi,\varrho) = \int_Q e^{2s A}((s\lambda) \zeta a (|\phi_x|^2 + |\varrho_x|^2) + (s\lambda)^2 {\zeta}^2 (|\phi|^2 + |\varrho|^2))dxdt,
$$
$$
\Gamma(\phi,\varrho,\tilde \varrho) = \int_Q e^{2s A}((s\lambda) \zeta a (|\phi_x|^2+|\varrho_x|^2+|\tilde \varrho_x|^2) + (s\lambda)^2 {\zeta}^2 (|\phi|^2 + |\varrho|^2 + |\tilde \varrho|^2))dxdt.
$$

\begin{proposition} 
There exist positive constants $C$, $\lambda_0$ and $s_0$ such that, for any $s \geq s_0$, $\lambda \geq \lambda_0$ and any $\phi^T\in L^2(Q)$, the corresponding solution $(\phi,\varrho)$ of \eqref{eq:adjoint_optimality_system2} satisfies
$$
\Gamma_0(\phi,\varrho) \leq C \left( \int_Q e^{2s A} s^4 \lambda^4 \zeta^4 (|G_0|^2+|\tilde{G}|^2)dxdt + \int_{\mathcal{O}\times(0,T)} e^{2s A} s^8 \lambda^8 \zeta^8 |\phi|^2 dxdt\right).
$$
\end{proposition}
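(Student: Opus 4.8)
The plan is to bootstrap the basic Carleman estimate of the previous Proposition, whose weight $e^{2s\varphi}$ vanishes as $t\to 0^+$, into one governed by $e^{2sA}$, which stays bounded below near $t=0$. Two structural facts drive everything. First, since $m(t)=t^4(T-t)^4$ on $[T/2,T]$, the two weight families coincide there: $\tau=\Theta$, $A=\varphi$ and $\zeta=\xi$ on $[T/2,T]$. Second, since $m(0)>0$ and $m\ge t^4(T-t)^4$, the functions $\tau,A,\zeta$ are smooth on $[0,3T/4]$, with $e^{2sA}\zeta^k$ bounded below there by a positive constant. I would split $\Gamma_0(\phi,\varrho)=\Gamma_0|_{(0,T/2)}+\Gamma_0|_{(T/2,T)}$ and estimate each piece separately, reducing both to the right-hand side of the previous Proposition.

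On $(T/2,T)$ the coincidence of the weights gives $\Gamma_0|_{(T/2,T)}=(I(\phi)+I(\varrho))|_{(T/2,T)}\le I(\phi)+I(\varrho)$, so the previous Proposition bounds it by the old right-hand side. It then remains to dominate the old weights by the new ones pointwise on $Q$, i.e. $e^{2s\varphi}\xi^k\le C\,e^{2sA}\zeta^k$ with $k=4$ (sources) and $k=8$ (the $\mathcal{O}$-observation term). This is an equality on $(T/2,T)$; on $(0,T/2)$ the factor $e^{2s\varphi}\xi^k\to 0$ as $t\to 0^+$ while $e^{2sA}\zeta^k$ stays bounded below, and the pointwise ratio is decreasing in $s$ and $\lambda$, so its supremum over $s\ge s_0,\lambda\ge\lambda_0$ is finite. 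This converts the old bound into the new one and settles the $(T/2,T)$ piece.

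The heart of the proof is the piece over $(0,T/2)$, which I would control by a weighted energy (Caccioppoli) estimate. Choose a temporal cut-off $\chi\in C^\infty([0,T])$ with $\chi\equiv 1$ on $[0,T/2]$ and $\mathrm{supp}\,\chi\subset[0,3T/4)$, so that $\mathrm{supp}\,\chi'\subset(T/2,3T/4)$ lies in the region where $A=\varphi$. Multiply the $\phi$- and $\varrho$-equations of \eqref{eq:adjoint_optimality_system2} by $\chi^2 e^{2sA}\phi$ and $\chi^2 e^{2sA}\varrho$, integrate over $Q$, and integrate by parts in $x$; the Dirichlet conditions kill the spatial boundary terms, the conditions $\varrho(\cdot,0)=0$ and $\chi(T)=0$ kill the temporal ones, and the surviving term $\tfrac12\|\phi(\cdot,0)\|_{L^2}^2$ has a favorable sign. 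One is left with the weighted gradient terms $\int\chi^2 e^{2sA}a(|\phi_x|^2+|\varrho_x|^2)$ on the left-hand side, from which the degenerate Hardy--Poincar\'e inequality in $H^1_a(0,1)$ recovers the zeroth-order terms, so that the $(0,T/2)$-part of $\Gamma_0$ is dominated; the sources $G_0,\tilde G$, which carry $e^{2sA}$ and are absorbed into the new right-hand side since $e^{2sA}\zeta^4$ is bounded below on $[0,3T/4]$; the multiplicative coupling terms from $\varrho\mathds{1}_{\mathcal{O}_d}$ and from $(\tfrac{\alpha_1}{\mu_1}\mathds{1}_{\mathcal{O}_1}+\tfrac{\alpha_2}{\mu_2}\mathds{1}_{\mathcal{O}_2})\phi$, absorbed by Young's inequality for $s,\lambda$ large (and $\mu_i$ favorable); and the localized terms produced by $\chi'$, which are supported in $(T/2,3T/4)$ where $A=\varphi$ and hence are controlled by $I(\phi)+I(\varrho)$, and thus by the right-hand side. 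Combining the two pieces yields the stated inequality.

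The main obstacle is keeping every constant uniform in $s\ge s_0$ and $\lambda\ge\lambda_0$. This is exactly why the energy estimate must be performed with the Carleman weight $e^{2sA}$ built into the multiplier, and why the cut-off is made to transition inside the region $\{t\ge T/2\}$ where $A=\varphi$: only there do the terms generated by $\partial_t(\chi^2 e^{2sA})$ carry the same weight as $I(\phi)+I(\varrho)$, so that the extra powers of $s,\lambda$ they produce can be measured against the dominant $(s\lambda)^2\xi^2$-weight and shown to decay as $s,\lambda\to\infty$. The second difficulty is the coupled, mixed forward--backward structure of \eqref{eq:adjoint_optimality_system2} ($\phi$ runs backward from $\phi^T$, $\varrho$ forward from $0$): the two multipliers must be handled simultaneously and the cross-terms absorbed together, using the smallness afforded by large $s,\lambda$. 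Up to this bookkeeping the argument follows the template of \cite{DemarqueLimacoViana_deg_sys2020}.
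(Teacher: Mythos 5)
The paper offers no proof of this proposition: it simply cites Proposition 4 of \cite{DemarqueLimacoViana_deg_sys2020}. Your reconstruction follows the standard template for passing from a Carleman weight vanishing at both endpoints to one bounded near $t=0$ — restrict the first Carleman inequality to $(T/2,T)$ where $A=\varphi$ and $\zeta=\xi$, dominate the old weights by the new ones pointwise on all of $Q$ to convert the right-hand side, and close the estimate on $(0,T/2)$ by a cut-off energy argument whose commutator terms are supported in $(T/2,3T/4)$, where the first inequality applies — and this is indeed the architecture of the cited proof. The weight comparison $e^{2s\varphi}\xi^k\le C\,e^{2sA}\zeta^k$ is justified as you say, and your handling of the temporal boundary terms and of the source and coupling terms is sound.

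There is, however, a concrete gap in the step where you claim the weighted energy estimate ``dominates the $(0,T/2)$-part of $\Gamma_0$.'' Multiplying by $\chi^2e^{2sA}\phi$ and invoking Hardy--Poincar\'e controls $\int_0^{T/2}\!\!\int e^{2sA}\bigl(a|\phi_x|^2+|\phi|^2\bigr)$, but $\Gamma_0$ carries the additional factors $(s\lambda)\zeta$ and $(s\lambda)^2\zeta^2$, which on $[0,T/2]$ are of order $s\lambda e^{2\lambda|\Psi|_\infty}$ and hence unbounded over $s\ge s_0$, $\lambda\ge\lambda_0$; moreover the term $\int\chi^2\,2sA_t\,e^{2sA}|\phi|^2$ produced by $\partial_t(\chi^2e^{2sA})$ lives on all of $[0,3T/4)$, not only on $\mathrm{supp}\,\chi'$, and carries the unbounded prefactor $s|A_t|\sim s\tau^2|\nu|$, where $\nu=e^{\lambda(|\Psi|_\infty+\Psi)}-e^{3\lambda|\Psi|_\infty}<0$. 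Both defects are repaired by one missing observation: on $[0,3T/4]$ the function $\tau$ is bounded above and below, $\lambda\vartheta\le C|\nu|$ for $\lambda\ge\lambda_0$, and therefore
\begin{equation*}
(s\lambda\zeta)^k e^{2sA}\;=\;(s\lambda\tau\vartheta)^k e^{-2s\tau|\nu|}\;\le\;C\,(s\tau|\nu|)^k e^{-2s\tau|\nu|}\;\le\;C_k\,e^{-s\tau|\nu|}\;=\;C_k\,e^{sA},
\end{equation*}
so every polynomial factor in $s,\lambda,\zeta$ can be absorbed into a harmless halving of the exponent, uniformly in $s\ge s_0$, $\lambda\ge\lambda_0$. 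Running the energy estimate with the multiplier $\chi^2e^{2sA^*}(s\lambda)^2(\zeta^*)^2\phi$ (the $t$-only weights avoid commutators of the weight with $a\partial_x$) then yields the stated bound with a uniform constant. Alternatively, note that the proposition is only ever invoked downstream for a single fixed pair $(s,\lambda)$ — see the definitions in \eqref{eq:weights_rhos} — for which your unrefined argument, with constants depending on $s$ and $\lambda$, already suffices.
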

\begin{proof}
    See Proposition 4 of \cite{DemarqueLimacoViana_deg_sys2020}.
\end{proof}

Since $\varrho = \alpha_1 \psi^1 + \alpha_2 \psi^2$ and $\tilde{G}=\alpha_{1}F_{1}+\alpha_{2}F_{2}$ we get directly the following Carleman inequality for the original adjoint system \eqref{eq:adjoint_optimality_system}.

\begin{proposition}
\label{prop:carleman}
There exist positive constants $C$, $\lambda_0$ and $s_0$ such that, for any $s \geq s_0$, $\lambda \geq \lambda_0$ and any $\phi^T \in L^2(Q)$, the corresponding solution $(\phi,\psi^1,\psi^2)$ of \eqref{eq:adjoint_optimality_system} satisfies
\begin{equation}
\label{Carleman for eq:adjoint_optimality_system}
\Gamma(\phi,\psi^1,\psi^2) \leq C \left( \int_Q e^{2s A} s^4 \lambda^4 \zeta^4 (|F|^2+|F_1|^2+|F_2|^2)dxdt + \int_{\mathcal{O}\times(0,T)} e^{2s A} s^8 \lambda^8 \zeta^8 |\phi|^2 dxdt\right).
\end{equation}
\end{proposition}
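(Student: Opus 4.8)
The plan is to obtain the target bound from the preceding $(\phi,\varrho)$-Carleman estimate (the $\Gamma_0$-estimate) together with the cascade structure of \eqref{eq:adjoint_optimality_system}, proceeding in two stages: transport the right-hand side, then recover $\psi^1,\psi^2$ individually. Comparing \eqref{eq:adjoint_optimality_system} with \eqref{eq:adjoint_optimality_system2} under \eqref{eq:condition_O1d_O2d}, and using $\varrho=\alpha_1\psi^1+\alpha_2\psi^2$, gives $G_0=F$ and $\tilde G=\alpha_1F_1+\alpha_2F_2$. Since $|\tilde G|^2\le 2\alpha_1^2|F_1|^2+2\alpha_2^2|F_2|^2$, the right-hand side of the preceding proposition is dominated by that of \eqref{Carleman for eq:adjoint_optimality_system}, so the $\Gamma_0$-estimate already bounds $\Gamma_0(\phi,\varrho)$ by the right-hand side of \eqref{Carleman for eq:adjoint_optimality_system}. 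This controls the $\phi$-part of $\Gamma$ and the Carleman norm of the single combination $\varrho$; the real work is to also control $\psi^1$ and $\psi^2$ separately, since $\Gamma(\phi,\psi^1,\psi^2)$ sees them individually whereas $\Gamma_0$ only sees $\varrho$.

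For the second stage I would exploit that the followers' adjoints are mutually decoupled: each $\psi^i$ solves the forward degenerate equation $\psi^i_t-(\ell(0)a\psi^i_x)_x=F_i-\tfrac1{\mu_i}\phi\mathds{1}_{\mathcal{O}_i}$ with $\psi^i(\cdot,0)=0$, forced only by the already-controlled $\phi$ and by $F_i$. Applying the scalar degenerate Carleman estimate of \cite{DemarqueLimacoViana_deg_eq2018} to $\psi^i$ (with the same weights $A,\zeta$, after the change $t\mapsto T-t$) bounds
$$
\int_Q e^{2sA}\big((s\lambda)\zeta a|\psi^i_x|^2+(s\lambda)^2\zeta^2|\psi^i|^2\big)\,dxdt
$$
by $C\int_Q e^{2sA}\big(|F_i|^2+\tfrac{1}{\mu_i^2}|\phi|^2\mathds{1}_{\mathcal{O}_i}\big)\,dxdt$ plus a local observation $C\int_{\mathcal{O}'\times(0,T)}e^{2sA}s^8\lambda^8\zeta^8|\psi^i|^2\,dxdt$. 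The key point is that the scalar source weight is of order $e^{2sA}$, i.e.\ several powers of $s\lambda\zeta$ below the zeroth-order weight of $\Gamma_0$; since $\zeta\ge c_0>0$, the term $\tfrac1{\mu_i^2}\int_Q e^{2sA}|\phi|^2$ is dominated by $\tfrac{C}{\mu_i^2}\Gamma_0(\phi)$ and absorbed by the first stage, while $\int_Q e^{2sA}|F_i|^2\le\int_Q e^{2sA}s^4\lambda^4\zeta^4|F_i|^2$ fits the target right-hand side.

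It then remains to remove the local observation of $\psi^i$, which does not appear in \eqref{Carleman for eq:adjoint_optimality_system}. For this I would again use the equation of $\psi^i$ on $\mathcal{O}'\subset\subset\mathcal{O}$: a local Caccioppoli-type argument, testing the equation against a cut-off weighted multiplier and integrating by parts, absorbs $\int_{\mathcal{O}'}(\cdots)|\psi^i|^2$ into a small fraction of the left-hand side together with the admissible observation $\int_{\mathcal{O}\times(0,T)}e^{2sA}s^8\lambda^8\zeta^8|\phi|^2$ and an $F_i$-source term. Adding the two bounds for $\psi^1,\psi^2$ to the first-stage bound for $(\phi,\varrho)$ then produces $\Gamma(\phi,\psi^1,\psi^2)\le C(\cdots)$, which is the claim.

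The main obstacle is precisely this elimination of the observation term on $\psi^i$. The scalar Carleman estimate unavoidably produces an interior observation of $\psi^i$ itself, and trading it for the admissible observation of $\phi$ forces one to use the coupling (that $\psi^i$ is driven by $\phi$) through a delicate local estimate, made harder by the degeneracy of $a$ at $x=0$ and the blow-up of $\partial_t A$ as $t\to T$. The weight built from $\Psi$ together with \eqref{conditionunder-a(x)}--\eqref{K-estimate-a(x)} (which keep $a\Psi_x^2$ bounded), and the fast decay of $e^{2sA}$ near $t=T$, are what make the cross and boundary terms in this local argument controllable for $s,\lambda$ large.
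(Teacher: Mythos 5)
Your first stage is exactly right and matches the paper: under \eqref{eq:condition_O1d_O2d} the pair $(\phi,\varrho)$ with $\varrho=\alpha_1\psi^1+\alpha_2\psi^2$, $G_0=F$ and $\tilde G=\alpha_1F_1+\alpha_2F_2$ falls under the $\Gamma_0$-estimate, whose right-hand side is dominated by that of \eqref{Carleman for eq:adjoint_optimality_system}; and you correctly identify that the only nontrivial point is recovering $\psi^1$ and $\psi^2$ \emph{individually} (the paper glosses over this with ``we get directly''). The gap is in your second stage. Running a scalar Carleman estimate on each $\psi^i$ produces a local observation $\int_{\mathcal{O}'\times(0,T)}e^{2sA}s^8\lambda^8\zeta^8|\psi^i|^2\,dxdt$, and neither of your proposed ways of eliminating it can work. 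It cannot be absorbed into the left-hand side, because it carries the weight $s^8\lambda^8\zeta^8$ while the left-hand side only carries $(s\lambda)^2\zeta^2$: for $s,\lambda$ large the observation is the \emph{larger} quantity. And it cannot be traded for an observation of $\phi$, because the standard mechanism for such a trade requires the observed component to reappear as a source in another equation of the system; here $\psi^1,\psi^2$ enter the $\phi$-equation only through the combination $\varrho\,\mathds{1}_{\mathcal{O}_d}$, so at best one can eliminate an observation of $\varrho$, never of $\psi^i$ separately. A Caccioppoli-type estimate on the $\psi^i$-equation also goes the wrong way: it bounds local gradients of $\psi^i$ by local $L^2$ norms of $\psi^i$, not $\psi^i$ by $\phi$.

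The ingredient you are missing is the zero initial condition $\psi^i(\cdot,0)=0$. Each $\psi^i$ solves a \emph{decoupled forward} degenerate parabolic equation whose source $F_i-\tfrac{1}{\mu_i}\phi\,\mathds{1}_{\mathcal{O}_i}$ is already fully controlled by your first stage. For such a problem no Carleman estimate (hence no observation term) is needed: a weighted energy/Gronwall estimate with multiplier $e^{2sA}(s\lambda)^2\zeta^2\psi^i$, started from zero data (and using that $A_t\le 0$ near $t=T$ and that the degeneracy conditions \eqref{conditionunder-a(x)}--\eqref{K-estimate-a(x)} keep $a\,|\Psi_x|^2$ bounded), gives $\int_Q e^{2sA}\bigl[(s\lambda)\zeta a|\psi^i_x|^2+(s\lambda)^2\zeta^2|\psi^i|^2\bigr]dxdt\le C\int_Q e^{2sA}(s\lambda)^2\zeta^2\bigl(|F_i|^2+|\phi|^2\bigr)dxdt$. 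The $F_i$-term fits the target right-hand side since $(s\lambda)^2\zeta^2\le s^4\lambda^4\zeta^4$ for $s\lambda\zeta\ge 1$, and the $\phi$-term is dominated by $\Gamma_0(\phi,\varrho)$, already bounded in stage one. Adding these bounds for $i=1,2$ to the $\Gamma_0$-estimate yields \eqref{Carleman for eq:adjoint_optimality_system}; the Carleman-plus-observation-removal detour is both unnecessary and, as proposed, unworkable.
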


As a corollary we get the observability inequality.
\begin{corollary} 
\label{cor:observability}
There exist positive constants $C$, $\lambda_0$ and $s_0$ such that, for any $s \geq s_0$, $\lambda \geq \lambda_0$ and any $\phi^T\in L^2(Q)$, the corresponding solution $(\phi,\varrho)$ of \eqref{eq:adjoint_optimality_system2} with $F_0 = F_1 = 0$, satisfies
\begin{equation}\label{Observability}
\|\phi(0)\|^2_{L^2(0,1)} + \|\varrho(T)\|^2_{L^2(0,1)} \leq C \int_{\mathcal{O}\times(0,T)} e^{2s A} s^8 \lambda^8 \zeta^8 |\phi|^2dxdt.
\end{equation}
\end{corollary}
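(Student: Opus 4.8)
The plan is to read off the observability inequality \eqref{Observability} from the Carleman estimate for $\Gamma_0(\phi,\varrho)$ (the proposition immediately preceding Proposition~\ref{prop:carleman}) by setting the source terms to zero and combining the resulting bound with parabolic energy (dissipation) estimates for the coupled pair $(\phi,\varrho)$. With $G_0=\tilde G=0$ that estimate reads
\[
\Gamma_0(\phi,\varrho)\le C\int_{\mathcal O\times(0,T)}e^{2sA}s^8\lambda^8\zeta^8|\phi|^2\,dxdt,
\]
while by definition $\Gamma_0(\phi,\varrho)\ge \int_Q e^{2sA}(s\lambda)^2\zeta^2(|\phi|^2+|\varrho|^2)\,dxdt$. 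From now on I fix $s=s_0$ and $\lambda=\lambda_0$, so that all weight factors become genuine functions on $Q$.

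The first step is to localize in time. Because $m$ is continuous and strictly positive on $[0,T)$ with $m(0)>0$, the weight $e^{2sA}(s\lambda)^2\zeta^2$ is bounded below by a positive constant on every strip $(0,1)\times[0,T-\epsilon]$, in particular on $(0,1)\times[T/4,3T/4]$. Hence the central-time integrals $\int_{T/4}^{3T/4}\big(\|\phi(t)\|_{L^2(0,1)}^2+\|\varrho(t)\|_{L^2(0,1)}^2\big)\,dt$, and more generally the integral of $\|\varrho\|_{L^2(0,1)}^2$ over all of $[0,3T/4]$, are bounded by $C\,\Gamma_0(\phi,\varrho)$, hence by the observation term on the right-hand side above; the weight only degenerates at $t=T$.

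Next I recover $\|\phi(0)\|^2$. Testing the first equation of \eqref{eq:adjoint_optimality_system2} (with $G_0=0$) against $\phi$ and using $a(0)=0$ and the boundary conditions gives $-\tfrac12\tfrac{d}{dt}\|\phi\|^2+\ell(0)\|\sqrt a\,\phi_x\|^2=\int_0^1\varrho\,\mathds 1_{\mathcal O_d}\phi\,dx$, whence $\tfrac{d}{dt}\|\phi\|^2\ge -C(\|\phi\|^2+\|\varrho\|^2)$. A Gronwall argument then yields, for each $t\in[T/4,3T/4]$,
\[
\|\phi(0)\|^2\le C\Big(\|\phi(t)\|^2+\int_0^{t}\big(\|\phi\|^2+\|\varrho\|^2\big)\,d\sigma\Big);
\]
averaging in $t$ over $[T/4,3T/4]$ and inserting the localized bounds of the previous step gives $\|\phi(0)\|^2\le C\int_{\mathcal O\times(0,T)}e^{2sA}s^8\lambda^8\zeta^8|\phi|^2\,dxdt$.

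The delicate point — and the step I expect to be the main obstacle — is the term $\|\varrho(T)\|^2$, precisely because the Carleman weight degenerates at $t=T$, so it cannot be controlled by merely averaging a central-time integral. My plan is to exploit the retrograde reduction already used to obtain the Carleman estimate: under the substitution $t\mapsto T-t$ that turns the forward equation for $\varrho$ into a backward one with \emph{zero terminal data} (the two--retrograde--equations setting of \cite{DemarqueLimacoViana_deg_sys2020}), the quantity $\|\varrho(T)\|^2$ becomes an initial-type value $\|\tilde\varrho(0)\|^2$. Since the weight is built, through $m$ with $m(0)>0$, so as not to degenerate at the initial time, the energy–dissipation argument used for $\|\phi(0)\|^2$ transfers verbatim: testing the backward $\tilde\varrho$-equation against $\tilde\varrho$, one bounds $\|\tilde\varrho(0)\|^2$ by a central-time integral of $\|\tilde\varrho\|^2$ plus a coupling integral of $\|\phi\|^2$, both taken over the region where the weight stays bounded below, and closes with the localized Carleman bounds. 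Equivalently, working directly in the original variables one uses the forward energy identity together with the initial condition $\varrho(0)=0$ and the Carleman control of $\phi$ and $\varrho$ away from $t=T$. The crux is to arrange the coupling so that only integrals over regions where the weight is bounded below appear; once $\|\varrho(T)\|^2$ is estimated this way, adding it to the bound for $\|\phi(0)\|^2$ yields \eqref{Observability}.
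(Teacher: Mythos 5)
The paper does not actually prove this corollary: its ``proof'' is a one-line citation to Corollary~1 of \cite{DemarqueLimacoViana_deg_sys2020}, so there is no in-paper argument to compare against. Judged on its own terms, your treatment of the $\|\phi(0)\|^2$ term is fine and standard (Carleman bound with $G_0=\tilde G=0$, lower bound on the weight on $[0,3T/4]$ because $m$ is bounded away from zero there, backward energy inequality for $\phi$ plus Gronwall, averaging over a central time interval). The problem is the $\|\varrho(T)\|^2$ term, which you yourself flag as ``the crux'' but then dispose of by assertion rather than by proof.

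The gap is concrete. Writing the forward equation for $\varrho$ in Duhamel form with $\varrho(0)=0$, one has
$\varrho(T)=-\int_0^T S(T-\sigma)\bigl(\tfrac{\alpha_1}{\mu_1}\mathds 1_{\mathcal O_1}+\tfrac{\alpha_2}{\mu_2}\mathds 1_{\mathcal O_2}\bigr)\phi(\sigma)\,d\sigma$,
so $\varrho(T)$ depends irreducibly on $\phi$ restricted to any final layer $[T-\epsilon,T]$. Every energy/Gronwall estimate for $\|\varrho(T)\|^2$ therefore produces a term of the form $\int_{t_0}^{T}\|\phi(\sigma)\|^2\,d\sigma$ whose upper limit is $T$; it cannot be replaced by an integral ``over regions where the weight is bounded below,'' because the dependence on $\phi$ near $t=T$ is genuine, not an artifact of the estimate. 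The time reflection $t\mapsto T-t$ does not help: the Carleman weight $e^{2sA}\zeta^k$ transforms along with the equations, and in the reflected variables it degenerates at the new initial time, which is exactly where you would need it to be bounded below for the ``verbatim transfer'' of the $\|\phi(0)\|^2$ argument. Since $A\to-\infty$ like $-c/(T-t)^4$ as $t\to T$, the observation term $\int_{\mathcal O\times(0,T)}e^{2sA}s^8\lambda^8\zeta^8|\phi|^2$ gives no control whatsoever on $\int_{T-\epsilon}^T\|\phi\|^2\,dt$ (for highly oscillatory $\phi^T$ the solution concentrates near $t=T$, where the left-hand contribution to $\varrho(T)$ decays only polynomially in the frequency while the weighted observation decays like $e^{-C\lambda_n^{4/5}}$). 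So the step you describe cannot be carried out by energy methods plus this Carleman estimate; whatever argument establishes Corollary~1 of \cite{DemarqueLimacoViana_deg_sys2020} must use additional structure, and your proposal does not supply it. Note also that the paper only ever uses the $\|\phi(0)\|^2$ part of this corollary (via \eqref{eq:carleman_simples}), which is the part your argument does establish.
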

\begin{proof}
    See Corollary 1 of \cite{DemarqueLimacoViana_deg_sys2020}.
\end{proof}
\color{black}

In the following sections we will need weights which depend only on $t$. Let us define
\begin{equation*}
\begin{cases}
A^*(t) = \displaystyle\max_{x \in (0,1)} A(x,t), \qquad \hat A(t) = \displaystyle\min_{x \in (0,1)} A(x,t), \\
\zeta^*(t) = \displaystyle\max_{x \in (0,1)} \zeta(x,t), \qquad \hat \zeta(t) = \min_{x \in (0,1)} \zeta(x,t),
\end{cases}
\end{equation*}
and we observe that $A^*(t) < 0$, $\hat A(t) < 0$ and that $\zeta^*(t) / \hat \zeta(t)$ does not depend on $t$ and is equal to some constant $\zeta_0 \in \R$. Moreover, if $\lambda$ is sufficiently large we can suppose
\begin{equation}
\label{eq:comp_pesos}
3 A^*(t) < 2 \hat A(t) < 0.
\end{equation}
Let us define 
$$
\hat \Gamma(\phi,\varrho,\tilde \varrho) = \int_Q e^{2s \hat A}[(s\lambda) \hat \zeta a (|\phi_x|^2+|\varrho_x|^2+|\tilde \varrho_x|^2) + (s\lambda)^2 {\hat \zeta}^2 (|\phi|^2 + |\varrho|^2 + |\tilde \varrho|^2)]dxdt.
$$
Thus, Proposition \ref{prop:carleman} and Corollary \ref{cor:observability}  imply directly the following corollary where the weights depend only on $t$.

\begin{corollary} 
\label{cor:carleman_pesos_t}
There exist positive constants $C$, $\lambda_0$ and $s_0$ such that, for any $s \geq s_0$, $\lambda \geq \lambda_0$ and any $\phi^T \in L^2(Q)$, the corresponding solution $(\phi,\psi^1,\psi^2)$ of \eqref{eq:adjoint_optimality_system} satisfies
\begin{equation}\label{pesos_t_Carleman for eq:adjoint_optimality_system}
\begin{split}
\|\phi(0)\|^2_{L^2(0,1)} + 
\hat \Gamma(\phi,\psi^1,\psi^2) \leq & C \left( \int_Q e^{2s A^*} (\zeta^*)^4 (|F|^2+|F_1|^2+|F_2|^2)dxdt \right. \\
& \left. + \int_{\mathcal{O}\times(0,T)} e^{2s A^*} (\zeta^*)^8 |\phi|^2 dxdt\right).
\end{split}
\end{equation}
\end{corollary}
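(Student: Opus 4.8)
The statement follows from Proposition \ref{prop:carleman} and Corollary \ref{cor:observability} by replacing the space--time weights with their extremal values in $x$; the only genuine work is the appearance of $\|\phi(0)\|^2_{L^2(0,1)}$ on the left. The plan is to argue in three steps. First I would reduce the left-hand side: from the definitions of $\hat A$ and $\hat\zeta$ as pointwise minima over $x$, together with $\tau(t)>0$, one has $\hat A(t)\le A(x,t)$ and $\hat\zeta(t)\le\zeta(x,t)$ for every $(x,t)\in Q$, and since $s>0$ this gives $e^{2s\hat A}\le e^{2sA}$. Comparing the two integrands term by term yields $\hat\Gamma(\phi,\psi^1,\psi^2)\le\Gamma(\phi,\psi^1,\psi^2)$, so Proposition \ref{prop:carleman} already bounds $\hat\Gamma(\phi,\psi^1,\psi^2)$ by the right-hand side of its Carleman estimate.

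Second, I would push that right-hand side up to the $t$-dependent weights. Since $A(x,t)\le A^*(t)$ and $\zeta(x,t)\le\zeta^*(t)$, one has $e^{2sA}\zeta^4\le e^{2sA^*}(\zeta^*)^4$ and $e^{2sA}\zeta^8\le e^{2sA^*}(\zeta^*)^8$; absorbing the fixed powers $s^4\lambda^4$ and $s^8\lambda^8$ into the constant (recall $s\ge s_0$ and $\lambda\ge\lambda_0$ are fixed) bounds the right-hand side of Proposition \ref{prop:carleman} by that of \eqref{pesos_t_Carleman for eq:adjoint_optimality_system}. Combined with the first step, this already gives the $\hat\Gamma$-part of the inequality.

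The remaining and only delicate point is recovering $\|\phi(0)\|^2_{L^2(0,1)}$, which I would obtain by a standard dissipation estimate exploiting $m(0)>0$. Fix $\chi\in C^\infty([0,T])$ with $\chi\equiv 1$ on $[0,T/4]$ and $\chi\equiv 0$ on $[T/2,T]$, multiply the $\phi$-equation of \eqref{eq:adjoint_optimality_system} by $\chi^2\phi$, and integrate over $Q$. As $\phi$ solves a backward equation, the time-derivative term produces $\tfrac12\|\phi(0)\|^2_{L^2(0,1)}$, the diffusion term is nonnegative, and all remaining contributions are supported in $[0,T/2]$ and controlled, via Cauchy--Schwarz, by $\int_0^{T/2}\!\int_0^1(|\phi|^2+|\psi^1|^2+|\psi^2|^2+|F|^2)\,dxdt$. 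Because $m$ is smooth and strictly positive with $m(0)>0$, the weights $e^{2s\hat A}\hat\zeta^2$ and $e^{2sA^*}(\zeta^*)^4$ are bounded below by positive constants on $[0,T/2]$; these integrals are therefore absorbed into $\hat\Gamma(\phi,\psi^1,\psi^2)$ and into $\int_Q e^{2sA^*}(\zeta^*)^4|F|^2\,dxdt$, both already estimated. Summing the three steps yields the corollary. I expect this last step to be the main obstacle: one must track that the modified time function $m$ keeps the Carleman weights finite and bounded away from zero near $t=0$, so the unweighted energy $\|\phi(0)\|^2$ can be traded against the weighted local and source terms. Note that the comparison \eqref{eq:comp_pesos} is not needed here, since the two sides already carry matching weights $A^*$ and $\hat A$; it is reserved for the subsequent controllability argument.
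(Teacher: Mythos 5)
Your proof is correct and follows essentially the route the paper intends: the paper simply asserts that Proposition \ref{prop:carleman} and Corollary \ref{cor:observability} ``imply directly'' the result, which is exactly your two weight comparisons ($\hat A\le A\le A^*$, $\hat\zeta\le\zeta\le\zeta^*$, with $s,\lambda$ fixed to absorb the powers) plus an observability-type energy argument for $\|\phi(0)\|^2_{L^2(0,1)}$. You rightly observe that Corollary \ref{cor:observability} is stated only for vanishing sources, and your cutoff/dissipation estimate near $t=0$ (using $m(0)>0$ to bound the weights away from zero on $[0,T/2]$) correctly supplies the source contributions, so your write-up is, if anything, more complete than the paper's one-line justification.
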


\subsection{Global null controllability of the linearized system}

Lets us define the weight funtions: 
\begin{equation}\label{eq:weights_rhos}
   \left\{ \begin{array}{l}
 \rho_0 = e^{-sA^*}  (\zeta^*)^{-2}, \qquad   \rho_1 = e^{-sA^*}  (\zeta^*)^{-4},\\  \rho_2 = e^{-3sA^*/2}  \hat \zeta^{-1},  \qquad \hat{\rho} = e^{-sA^*} (\zeta^*)^{-3},      
    \end{array}\right.
\end{equation}
satisfying
\begin{equation}\label{eq:compara_rhos}
 \rho_{1}\leq C \hat{\rho}\leq C\rho_{0}\leq C\rho_{2} \qquad \text{and} \qquad \hat{\rho}^{2}=\rho_{1}\rho_{0}.   
\end{equation}
In particular, Corollary \ref{cor:carleman_pesos_t} and \eqref{eq:comp_pesos} imply 
\begin{equation}
\label{eq:carleman_simples}
\begin{split}
\|\phi(0)\|^2_{L^2(0,1)} +
\int_Q \rho_2^{-2} (|\phi|^2 + |\psi^{1}|^2 + |\psi^{2}|^2)dxdt
\leq & \ C \left( \int_Q \rho_0^{-2} (|F|^2+|F_1|^2+|F_2|^2)dxdt \right. \\
& \left. + \int_{\mathcal{O}\times(0,T)} \rho_1^{-2} |\phi|^2 dxdt\right).
\end{split}
\end{equation}

In the following theorem we show the global null controllability of the linearized system \eqref{eq:linearized_system}. In particular, since the weight $\rho_0$ blows up at $t=T$, \eqref{estimate for solution} shows that $y(\cdot,T)=0$ in $[0,1]$. The same vanishing conclusion holds for $p^i$, $i=1,2$ (which define the follower's controls $v^i$) and the leader control $h$.
Furthermore, estimates \eqref{des Proposition 5} and \eqref{des Proposition 6} give additional estimates verified by the derivatives of the states $(y,p^1,p^2)$ that are needed for the local null controllability of the nonlinear system
in Section \ref{control for nonlinear system}.

\begin{theorem}\label{theorem case linear}
If $y_{0}\in L^{2}(0,1)$, $\rho_2 H$, $\rho_2 H_1$, $\rho_2 H_2 \in L^2(Q)$, then there exists a control $h\in L^{2}(\mathcal{O}\times (0,T))$ with associated states $y, p^{1}, p^{2}\in C^{0}([0,T];L^{2}(0,1))\cap L^{2}(0,T;H^{1}_{a}(0,1))$  from \eqref{eq:linearized_system} such that
\begin{equation}\label{estimate for solution}
\int_Q \rho_0^2 |y|^2 dxdt+ \int_Q \rho_0^2 (|p^1|^2 +  |p^2|^2)dxdt + \int_{\mathcal{O} \times (0,T)} \rho_1^2 |h|^2dxdt \leq
C \kappa_{0}(H,H_{1},H_{2},y_{0}),   
\end{equation}
where $\kappa_{0}(H,H_{1},H_{2},y_{0})= \|\rho_2 H\|^2_{L^2(Q)} + \|\rho_2 H_1\|^2_{L^2(Q)} + \|\rho_2 H_2\|^2_{L^2(Q)} + \|y_{0}\|^2_{L_2(0,1)}$. In particular, $y(x,T)=0$, for all $x\in [0,1]$. 

Furthermore, 
\begin{equation}\label{des Proposition 5}
    \begin{array}{c}
\displaystyle\sup_{[0,T]}(\hat{\rho}^{2}\|y\|^{2}_{L^{2}(0,1)}) + \displaystyle\sup_{[0,T]}(\hat{\rho}^{2}\|p^{1}\|^{2}_{L^{2}(0,1)}) + \displaystyle\sup_{[0,T]}(\hat{\rho}^{2}\|p^{2}\|^{2}_{L^{2}(0,1)})\vspace{0.1cm}\\
+\displaystyle\int_{Q}\hat{\rho}^{2}\ell(0)a(x)(|y_{x}|^{2} + |p^{1}_{x}|^{2}+|p^{2}_{x}|^{2})dxdt\ \leq C \kappa_{0}(H,H_{1},H_{2},y_{0})  
    \end{array}
\end{equation}
and, if $y_{0}\in H^{1}_{a}(0,1)$ 
\begin{equation}\label{des Proposition 6}
    \begin{array}{c}
\displaystyle\sup_{[0,T]}(\rho_{1}^{2}\|\sqrt{a}y_{x} \|^{2}_{L^{2}(0,1)})+ \displaystyle\sup_{[0,T]}(\rho_{1}^{2}\|\sqrt{a}p^{1}_{x} \|^{2}_{L^{2}(0,1)}) + \displaystyle\sup_{[0,T]}(\rho_{1}^{2}\|\sqrt{a}p^{2}_{x} \|^{2}_{L^{2}(0,1)})\\
+ \displaystyle\int_{Q}\rho_{1}^{2}(|y_{t}|^{2}+|p^{1}_{t}|^{2}+|p^{2}_{t}|^{2}+|(\ell(0)a(x)y_{x})_{x}|^{2} + |(\ell(0)a(x)p^{1}_{x})_{x}|^{2}+ |(\ell(0)a(x)p^{2}_{x})_{x}|^{2})dxdt\\
\leq C \kappa_{1}(H,H_{1},H_{2},y_{0}),  
    \end{array}
\end{equation}
where $\kappa_{1}(H,H_{1},H_{1},y_{0})= \|\rho_2 H\|^2_{L^2(Q)} + \|\rho_2 H_1\|^2_{L^2(Q)} + \|\rho_2 H_2\|^2_{L^2(Q)} + \|y_{0}\|^2_{H^{1}_{a}(0,1)}$. 
\end{theorem}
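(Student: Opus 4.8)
The plan is to establish \eqref{estimate for solution} by a Fursikov--Imanuvilov duality argument built on the observability estimate \eqref{eq:carleman_simples}, and then to derive the weighted bounds \eqref{des Proposition 5} and \eqref{des Proposition 6} by weighted energy estimates on the controlled trajectory. Once a control $h$ with $\rho_1 h\in L^2(\mathcal{O}\times(0,T))$ is produced, Proposition \ref{Regularidade para linear system} supplies the asserted functional setting $y,p^1,p^2\in C^0([0,T];L^2(0,1))\cap L^2(0,T;H^1_a(0,1))$ (and the $H^1$--$H^2_a$ regularity when $y_0\in H^1_a(0,1)$), so it remains to construct $h$ and to verify the three weighted inequalities.

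For the construction I would work on the adjoint system \eqref{eq:adjoint_optimality_system}. On the space $P_0$ of smooth triples $(\phi,\psi^1,\psi^2)$ meeting the boundary conditions and the end conditions $\psi^i(\cdot,0)=0$, associate the adjoint images
\[
F=-\phi_t-(\ell(0)a\phi_x)_x-\alpha_1\psi^1\mathds{1}_{\mathcal{O}_{1,d}}-\alpha_2\psi^2\mathds{1}_{\mathcal{O}_{2,d}},\qquad F_i=\psi^i_t-(\ell(0)a\psi^i_x)_x+\tfrac{1}{\mu_i}\phi\mathds{1}_{\mathcal{O}_i},
\]
and introduce the bilinear form
\[
\mathfrak{a}\big((\phi,\psi^1,\psi^2),(\bar\phi,\bar\psi^1,\bar\psi^2)\big)=\int_Q\rho_0^{-2}\big(F\bar F+F_1\bar F_1+F_2\bar F_2\big)\,dxdt+\int_{\mathcal{O}\times(0,T)}\rho_1^{-2}\phi\bar\phi\,dxdt.
\]
By \eqref{eq:carleman_simples} this is an inner product, whose associated completion I call $P$; moreover the linear functional $\langle G,(\phi,\psi^1,\psi^2)\rangle=\int_0^1 y_0\,\phi(0)\,dx+\int_Q H\phi\,dxdt+\sum_{i=1}^2\int_Q H_i\psi^i\,dxdt$ is bounded on $P$, since \eqref{eq:carleman_simples} controls $\|\phi(0)\|_{L^2}$ and $\int_Q\rho_2^{-2}(|\phi|^2+|\psi^i|^2)$ by the $\mathfrak{a}$-norm and $\big|\int_Q H\phi\big|\le\|\rho_2 H\|_{L^2(Q)}\|\rho_2^{-1}\phi\|_{L^2(Q)}$ (likewise for the $H_i\psi^i$ terms). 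Lax--Milgram then yields a unique $\widehat\Phi=(\hat\phi,\hat\psi^1,\hat\psi^2)\in P$ with $\mathfrak{a}(\widehat\Phi,\cdot)=\langle G,\cdot\rangle$.

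Next I would set $y:=\rho_0^{-2}\hat F$, $p^i:=\rho_0^{-2}\hat F_i$ and $h:=-\rho_1^{-2}\hat\phi\,\mathds{1}_{\mathcal{O}}$, where $\hat F,\hat F_i$ are the adjoint images of the minimizer. Testing \eqref{eq:linearized_system} against adjoint triples and integrating by parts in $x$ and $t$ produces a duality identity whose only boundary contribution is $\int_0^1 y(\cdot,T)\phi^T\,dx$; the Euler--Lagrange equation $\mathfrak{a}(\widehat\Phi,\cdot)=\langle G,\cdot\rangle$ is exactly the weak form of \eqref{eq:linearized_system} for this $h$, so a density argument identifies $(y,p^1,p^2)$ as the associated state and forces $\int_0^1 y(\cdot,T)\phi^T\,dx=0$ for every $\phi^T$, i.e. $y(\cdot,T)=0$. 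Then \eqref{estimate for solution} is immediate: its left-hand side equals $\mathfrak{a}(\widehat\Phi,\widehat\Phi)=\langle G,\widehat\Phi\rangle\le\|G\|_{P'}\,\mathfrak{a}(\widehat\Phi,\widehat\Phi)^{1/2}$, whence $\mathfrak{a}(\widehat\Phi,\widehat\Phi)\le C\kappa_0(H,H_1,H_2,y_0)$; the vanishing of $p^i$ and $h$ at $t=T$ follows from the blow-up of $\rho_0,\rho_1$ there.

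Finally, for \eqref{des Proposition 5} and \eqref{des Proposition 6} I would run weighted energy estimates on \eqref{eq:linearized_system} with the control already fixed. For \eqref{des Proposition 5}, multiply the $y$-equation by $\hat{\rho}^2 y$ and each $p^i$-equation by $\hat{\rho}^2 p^i$, integrate over $(0,1)$ and by parts; the terms generated by $\partial_t(\hat{\rho}^2)$ and by the right-hand sides ($H$, $h\mathds{1}_{\mathcal{O}}$, $\mu_i^{-1}p^i\mathds{1}_{\mathcal{O}_i}$, $\alpha_i y\mathds{1}_{\mathcal{O}_{i,d}}$) are absorbed through the comparisons \eqref{eq:compara_rhos}, in particular $\rho_1\le C\hat{\rho}\le C\rho_0\le C\rho_2$ and $\hat{\rho}^2=\rho_1\rho_0$, together with the already-proven \eqref{estimate for solution}. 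For \eqref{des Proposition 6}, assuming $y_0\in H^1_a(0,1)$, multiply instead by $\rho_1^2 y_t$ (resp.\ $\rho_1^2 p^i_t$) to bound $\int_Q\rho_1^2|y_t|^2$ and $\sup_t\rho_1^2\|\sqrt a\,y_x\|_{L^2}^2$, then recover $\int_Q\rho_1^2|(\ell(0)a y_x)_x|^2$ directly from the equation, the lower-order and coupling contributions being again controlled by \eqref{estimate for solution}, \eqref{des Proposition 5} and \eqref{eq:compara_rhos}, with Proposition \ref{Regularidade para linear system} justifying these manipulations. The main obstacle is precisely this last bookkeeping: matching the time-derivatives of the weights and the non-diagonal coupling terms (the follower feedback $-\mu_i^{-1}p^i\mathds{1}_{\mathcal{O}_i}$ and the observation coupling $\alpha_i y\mathds{1}_{\mathcal{O}_{i,d}}$) against the available weighted norms so that every term closes, while keeping track, because of the degeneracy $a(0)=0$, of the weighted spaces $H^1_a$, $H^2_a$ and of the absence of a boundary contribution at $x=0$ in the integrations by parts.
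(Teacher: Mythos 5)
Your proposal is correct and follows essentially the same route as the paper: the same Fursikov--Imanuvilov/Lax--Milgram construction on the completion of $\mathcal{P}_0$ with the bilinear form built from $\rho_0^{-2}$-weighted adjoint residuals plus the $\rho_1^{-2}$-weighted observation term, the same definitions $y=\rho_0^{-2}\hat F$, $p^i=\rho_0^{-2}\hat F_i$, $h=-\rho_1^{-2}\hat\phi\,\mathds{1}_{\mathcal{O}}$, and the same weighted multipliers ($\hat\rho^2 y$, $\hat\rho^2 p^i$, then $\rho_1^2 y_t$, $\rho_1^2 p^i_t$) for \eqref{des Proposition 5}--\eqref{des Proposition 6}, relying on $\hat\rho^2=\rho_0\rho_1$ and the comparisons \eqref{eq:compara_rhos} exactly as the paper does. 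The only cosmetic differences are that the paper reverses time in the $p^i$-equations before running the energy estimates and obtains the last second-order bound by multiplying by $-\rho_1^2(\ell(0)a y_x)_x$ rather than reading it off the equation, neither of which changes the argument.
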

\begin{proof}
Let us denote by $L\varphi =  \varphi_{t} - (\ell(0)a(x)\varphi_{x})_{x}$ and $L^{\ast}\varphi=-\varphi_{t} - (\ell(0)a(x)\varphi_{x})_{x}$. Then, we define 
\begin{equation*}
\begin{array}{l}
    \mathcal{P}_{0} = \{ (\phi,\psi^{1},\psi^{2})\in C^{2}(\overline{Q})^{3}; \phi(0,t)=\phi(1,t) = 0\ \text{a.e in}\ (0,T),\ \psi^{i}(0,t)=\psi^{i}(1,t)=0\, \text{a.e in}\, (0,T),\\ \hspace{1.2cm}\psi^{i}(\cdot,0)=0\ \text{in}\ (0,1), \ i=\lbrace 1, 2\rbrace\}
\end{array}
\end{equation*}
and the application $b:\mathcal{P}_{0}\times \mathcal{P}_{0}\rightarrow \mathbb{R}$ given by
\begin{eqnarray}\label{eq:def_b}
b((\tilde{\phi},\tilde{\psi^{1}},\tilde{\psi^{2}}),(\phi,\psi^{1},\psi^{2}))&=&\int_{Q}\rho_{0}^{-2}(L^{\ast}\tilde{\phi}-\alpha_{1}\tilde{\psi^{1}}\mathds{1}_{\mathcal{O}_{d}}-\alpha_{2}\tilde{\psi^{2}}\mathds{1}_{\mathcal{O}_{d}})(L^{\ast}\phi-\alpha_{1}\psi^{1}\mathds{1}_{\mathcal{O}_{d}}-\alpha_{2}\psi^{2}\mathds{1}_{\mathcal{O}_{d}}) dx dt\nonumber\\
&\quad&+\sum_{i=1}^{2}\displaystyle\int_{Q}\rho_{0}^{-2}(L\tilde{\psi^{i}} + \frac{1}{\mu_{i}}\tilde{\phi}\mathds{1}_{\mathcal{O}_{i}})(L\psi^{i} + \frac{1}{\mu_{i}}\phi\mathds{1}_{\mathcal{O}_{i}}) dx dt\nonumber\\
&\quad&+\int_{\mathcal{O}\times (0,T)}\rho_{1}^{-2}\tilde{\phi}\phi\ dx dt,\,\, \forall (\phi,\psi^{1},\psi^{2}),(\tilde{\phi},\tilde{\psi^{1}},\tilde{\psi^{2}})\in \mathcal{P}_{0},
\end{eqnarray}
which is bilinear on $\mathcal{P}_{0}$ and defines an inner product. Indeed, taking $(\tilde{\phi},\tilde{\psi^{1}},\tilde{\psi^{2}})=(\phi,\psi^{1},\psi^{2})$ in \eqref{eq:def_b}, we have that, by \eqref{eq:carleman_simples}  $b(\cdot,\cdot)$ is positive definite. The other properties are straightforwardly verified.

Let us consider the space $\mathcal{P}$ the completion of $\mathcal{P}_{0}$ for the norm associated to $b(\cdot,\cdot)$ (which we denote by $\|.\|_{\mathcal{P}}$). Then, $b(\cdot,\cdot)$ is symmetric, continuous and coercive bilinear form on $\mathcal{P}$.

Now, let us define the functional linear $\mathcal{S} :\mathcal{P}\rightarrow\mathbb{R}$ as 
\begin{equation*}
    \langle\mathcal{S}, (\phi,\psi^{1},\psi^{2})\rangle = \displaystyle\int_{0}^{1}y_{0}\phi(0)dx + \displaystyle\int_{Q}(H\phi + H_{1}\psi^{1} + H_{2}\psi^{2})dx dt.
\end{equation*}
Note that $\mathcal{S}$ is a bounded linear form on $\mathcal{P}$. Indeed, applying the classical Cauchy-Schwartz inequality 
in $\mathbb{R}^{4}$ and using \eqref{eq:carleman_simples}, we get 
\begin{equation}\label{l limitado}
    \begin{array}{l}
       |\langle\mathcal{S}, (\phi,\psi^{1},\psi^{2})\rangle|\leq  \|y_{0}\|_{L^{2}(0,1)}\|\phi(0)\|_{L^{2}(0,1)} + \|\rho_{2}H\|_{L^{2}(Q)}\|\rho_{2}^{-1}\phi\|_{L^{2}(Q)} \\
       \hspace{3cm} + \|\rho_{2}H_{1}\|_{L^{2}(Q)}\|\rho_{2}^{-1}\psi^{1}\|_{L^{2}(Q)} + \|\rho_{2}H_{2}\|_{L^{2}(Q)}\|\rho_{2}^{-1}\psi^{2}\|_{L^{2}(Q)}\\
       \leq C\left(\|y_{0}\|^{2}_{L^{2}(0,1)} + \|\rho_{2}H\|^{2}_{L^{2}(Q)} + \|\rho_{2}H_{1}\|^{2}_{L^{2}(Q)}  + \|\rho_{2}H_{2}\|^{2}_{L^{2}(Q)}\right)^{1/2}\left(b((\phi,\psi^{1},\psi^{2}),(\phi,\psi^{1},\psi^{2}))\right)^{1/2}\\
     \leq C\left(\|y_{0}\|^{2}_{L^{2}(0,1)} + \|\rho_{2}H\|^{2}_{L^{2}(Q)} + \|\rho_{2}H_{1}\|^{2}_{L^{2}(Q)}  + \|\rho_{2}H_{2}\|^{2}_{L^{2}(Q)}\right)^{1/2}\|(\phi,\psi^{1},\psi^{2})\|_{\mathcal{P}},
    \end{array}
\end{equation}
for all $(\phi,\psi^{1},\psi^{2})\in\mathcal{P}$. Consequently, in view of Lax-Milgram's lemma, there is only one $(\hat{\phi},\hat{\psi^{1}},\hat{\psi^{2}})\in\mathcal{P}$ satisfying
\begin{equation}\label{eq: por Lax-M.}    b((\hat{\phi},\hat{\psi^{1}},\hat{\psi^{2}}),(\phi,\psi^{1},\psi^{2})) = \langle\mathcal{S}, (\phi,\psi^{1},\psi^{2})\rangle,\,\, \forall (\phi,\psi^{1},\psi^{2})\in\mathcal{P}.
\end{equation}
Let us set
\begin{equation}\label{definição de y, pi, h}
    \left\{\begin{aligned}
      &y = \rho_{0}^{-2}(L^{\ast}\hat{\phi}-\alpha_{1}\hat{\psi^{1}}\mathds{1}_{\mathcal{O}_{d}}-\alpha_{2}\hat{\psi^{2}}\mathds{1}_{\mathcal{O}_{d}})&&\text{in}&& Q,\\
      &p^{i} = \rho_{0}^{-2}(L\hat{\psi^{i}} + \dfrac{1}{\mu_{i}}\hat{\phi}\mathds{1}_{\mathcal{O}_{i}}),\, i=\{1,2\} &&\text{in}&& Q,\\
      &h = -\rho_{1}^{-2}\hat{\phi}\mathds{1}_{\mathcal{O}} &&\text{in}&& Q.
\end{aligned}\right.
\end{equation}
Then, replacing \eqref{definição de y, pi, h} in \eqref{eq: por Lax-M.} we have
\begin{eqnarray*}
         \int_{Q}y B\,dxdt +\int_{Q}(p^{1}B_{1} + p^{2}B_{2})\,dxdt
         &=& \int_{0}^{1}y_{0}\phi(0)dx + \int_{\mathcal{O}\times(0,T)} h \phi\,dxdt\\ &\quad&+\displaystyle\int_{Q}(H\phi + H_{1}\psi^{1} + H_{2}\psi^{2})dxdt,
\end{eqnarray*}
where $(\phi,\psi^{1},\psi^{2})$ is a solution of the system
\begin{equation*}
\left\{\begin{aligned}
       &L^{\ast}\phi = B +\alpha_{1}{\psi^{1}}\mathds{1}_{\mathcal{O}_{d}} + \alpha_{2}{\psi^{2}}\mathds{1}_{\mathcal{O}_{d}}  &&\text{in}&& Q,  \\
       &L\psi^{i} = B_{i} - \dfrac{1}{\mu_{i}}{\phi}\mathds{1}_{\mathcal{O}_{i}}  \  &&\text{in}&& Q, \\
       &\phi(0,t)=\phi(1,t)=0, \ \ \psi^i(0,t)=\psi^i(1,t)=0 &&\text{on}&& (0,T),\\
       &\phi(\cdot,T)=0,\, \psi^{i}(\cdot,0)=0 &&\text{in}&& (0,1),
   \end{aligned}\right.
\end{equation*}
for $i=\lbrace 1, 2\rbrace$. Therefore, $(y,p^{1},p^{2})$ is a solution by transposition of \eqref{eq:linearized_system}. Also, as $(\hat{\phi},\hat{\psi^{1}},\hat{\psi^{2}})\in\mathcal{P}$ and 
$H, H_{1}, H_{2}\in L^{2}(Q)$,  
using the Proposition \ref{Regularidade para linear system}, we obtain
$$y,p^{1},p^{2}\in C^{0}([0,T];L^{2}(0,1))\cap L^{2}(0,T;H^{1}_{a}(0,1)).$$
Moreover, from \eqref{l limitado} and \eqref{eq: por Lax-M.}
\begin{equation*}
\left(b((\hat{\phi},\hat{\psi^{1}},\hat{\psi^{2}}),(\hat{\phi},\hat{\psi^{1}},\hat{\psi^{2}}))\right)^{1/2}  \leq C\left(\|y_{0}\|^{2}_{L^{2}(0,1)} + \|\rho_{2}H\|^{2}_{L^{2}(Q)} + \|\rho_{2}H_{1}\|^{2}_{L^{2}(Q)}  + \|\rho_{2}H_{2}\|^{2}_{L^{2}(Q)}\right)^{1/2}
\end{equation*}
that is,
\begin{equation*}
\begin{array}{l}
\displaystyle\int_{Q}\rho_{0}^{2}|y|^{2}\ dxdt       + \displaystyle\sum_{i=1}^{2}\displaystyle\int_{Q}\rho_{0}^{2}|p^{i}|^{2}\ dxdt +\displaystyle\int_{\mathcal{O}\times (0,T)}\rho_{1}^{2}|h|^{2} dx dt\\
\leq C\left(\|y_{0}\|^{2}_{L^{2}(0,1)} + \|\rho_{2}H\|^{2}_{L^{2}(Q)} + \|\rho_{2}H_{1}\|^{2}_{L^{2}(Q)}  + \|\rho_{2}H_{2}\|^{2}_{L^{2}(Q)}\right),
\end{array}
\end{equation*}
proving \eqref{estimate for solution}.

Before continuing with the demonstration of this result, we note that the system \eqref{eq:linearized_system} has its first equation acting forward in time and its other two backward in time. Therefore, to establish estimates \eqref{des Proposition 5} and \eqref{des Proposition 6} we need to change variables in the form $\tilde{p}^{i}(x,t)=p^{i}(x,T-t)$ and $ \tilde{H}_{i}(x,t)=H_{i}(x,T-t)$, with $i=\lbrace 1,2\rbrace$, so that we obtain a system with new variables $(\tilde {y},\tilde{p}^{i},\tilde{H}_{i})$ in which all its equations are forward in time. In this way, the new system have initial conditions $(\tilde{y}(\cdot,0),\tilde{p}^{1}(\cdot,0),\tilde{p}^{2}(\cdot, 0 ))=(y_{0},0,0)$ and the estimates with weights for the solution, as well as regularity results, will be equivalent to those of the system \eqref{eq:linearized_system}. Keeping this in mind, for simplicity, we will maintain the \eqref{eq:linearized_system} notation for the then system
\begin{equation}
\label{eq:linearized_system new}
\left\{\begin{aligned}
&y_t - (\ell(0) a(x) y_x)_x = h \mathds{1}_{\mathcal{O}} -\frac{1}{\mu_1} p^1 \mathds{1}_{\mathcal{O}_{1}} - \frac{1}{\mu_2} p^2 \mathds{1}_{\mathcal{O}_{2}} + H &&\text{in}&& Q, \\
&p_t^i - \left(\ell(0) a(x) p_x^i \right)_x  = \alpha_i y \mathds{1}_{\mathcal{O}_{i,d}} + H_i  &&\text{in}&& Q,\\
&y(0,t)=y(1,t)=0, \ \ p^i(0,t) = p^i(1,t) = 0 &&\text{on}&&\ (0,T), \\
&y(\cdot,0) = y_{0}, \ \ p^i(\cdot,0) = 0 &&\text{in}&& (0,1),
\end{aligned}
\right.
\end{equation}
for $i=\lbrace 1,2\rbrace.$

Let us multiply $\eqref{eq:linearized_system new}_{1}$ by $\hat{\rho}^{2} y$, $\eqref{eq:linearized_system new}_{2}$ by $\hat{\rho}^{2} p^{i}$ and integrate over $[0,1]$. Hence, using that $\hat{\rho}^{2} = \rho_{0}\rho_{1}$, and $\rho_{1}\leq C\rho_{2}$, we compute
{
\begin{equation}\label{second estimate}
    \begin{array}{l}
     \dfrac{1}{2}\dfrac{d}{dt}\displaystyle\int_{0}^{1}\hat{\rho}^{2}|y|^{2}dx +\sum_{i=1}^{2}\dfrac{1}{2}\dfrac{d}{dt}\displaystyle\int_{0}^{1}\hat{\rho}^{2} | p^{i}|^{2}dx + \displaystyle\int_{0}^{1}\hat{\rho}^{2}\ell(0)a(x)|y_{x}|^{2} dx+ \sum_{i=1}^{2}\displaystyle\int_{0}^{1}\hat{\rho}^{2}\ell(0)a(x)|p_{x}^{i}|^{2}dx\hspace{0.1cm}\\
          \leq C\left(\displaystyle\int_{0}^{1}\rho_{0}^{2}|y|^{2}dx + \sum_{i=1}^{2}\displaystyle\int_{0}^{1}\rho_{0}^{2}|p^{i}|^{2}dx + \displaystyle\int_{\mathcal{O}}\rho_{1}^{2}|h|^{2}dx + \displaystyle\int_{0}^{1}\rho_{2}^{2}|H|^{2}dx+
          \sum_{i=1}^{2}\displaystyle\int_{0}^{1}\rho_{2}^{2}|H_{i}|^{2}dx\right)\hspace{0.1cm}\\
          \,\,\, + \ \mathcal{M},
    \end{array}
\end{equation}}
where $\mathcal{M} = \int_{0}^{1}\hat{\rho}(\hat{\rho})_{t}|y|^{2}dx +\sum_{i=1}^{2}\int_{0}^{1}\hat{\rho}(\hat{\rho})_{t}|p^{i}|^{2}dx$, and $(\cdot)_{t}=\frac{d}{dt}(\cdot)$. To facilitate notation, we will omit the sum. Recall that
$A^*(t) = C_1 \tau(t)$, and $\zeta^*(t) = C_2 \tau(t)$, then we have that $A^*_{t}=\bar C (\zeta^*)_{t}$ and consequently
\begin{equation*}
    \begin{split}
      \hat{\rho}(\hat{\rho})_{t} &= e^{-sA^*}(\zeta^*)^{-3}\left(-se^{-sA^*} A^*_{t}(\zeta^*)^{-3} -3 e^{-sA^*}(\zeta^*)^{-4}(\zeta^*)_{t}\right)    \\
       &= -e^{-2sA^*}(\zeta^*)^{-4}(\zeta^*)_{t}\left(s(\zeta^*)^{-2}\bar{C} + 3(\zeta^*)^{-3} \right)  \\
        &=-\rho_{0}^{2}(\zeta^*)_{t}\left(s(\zeta^*)^{-2}\bar{C} + 3(\zeta^*)^{-3} \right).
    \end{split}
\end{equation*}
Thus, for any $t\in [0,T)$,
\begin{equation*}
    \begin{array}{l}
        |\hat{\rho}(\hat{\rho})_{t}|\leq C\rho_{0}^{2}\tau^{2}|s(\zeta^*)^{-2}\bar{C} + 3(\zeta^*)^{-3}|  
         \leq C\rho_{0}^{2}|s\bar{C}+3(\zeta^*)^{-1}| \leq C\rho_{0}^{2},
    \end{array}
\end{equation*}
and we obtain
\begin{equation*}
    \mathcal{M}\leq C \displaystyle\int_{0}^{1}{\rho_{0}^{2}}(|y|^{2} +  |p^{i}|^{2})dx.
\end{equation*}
Therefore, \eqref{second estimate} becomes
\begin{equation*}
    \begin{array}{l}
       \dfrac{1}{2}\dfrac{d}{dt}\displaystyle\int_{0}^{1}\hat{\rho}^{2}(|y|^{2}+| p^{i}|^{2})dx + \dfrac{1}{2}\displaystyle\int_{0}^{1}\hat{\rho}^{2}\ell(0)a(x)(|y_{x}|^{2}+|p^{i}_{x}|^{2})dx\vspace{0.1cm}\\
          \leq C\left(\displaystyle\int_{0}^{1}\rho_{0}^{2}(|y|^{2} + |p^{i}|^{2})dx + \displaystyle\int_{\mathcal{O}}\rho_{1}^{2}|h|^{2}dx + \displaystyle\int_{0}^{1}\rho_{2}^{2}(|H|^{2}+|H_{i}|^{2})dx\right)  
    \end{array}
\end{equation*}
and, integrating over time, we conclude \eqref{des Proposition 5}.

Now, to prove \eqref{des Proposition 6}, multiply $\eqref{eq:linearized_system new}_{1}$ by $\rho_{1}^{2} y_{t}$ and $\eqref{eq:linearized_system new}_{2}$ by $\rho_{1}^{2} p^{i}_{t}$ and integrate over $[0,1]$. Thus, we get

\begin{equation}\label{third estimate}
    \begin{array}{l}
\displaystyle\int_{0}^{1}\rho_{1}^{2}(|y_{t}|^{2}+|p^{i}_{t}|^{2})dx + \dfrac{1}{2}\dfrac{d}{dt}\displaystyle\int_{0}^{1}\rho_{1}^{2}\ell(0)a(x)(|y_{x}|^{2}+|p^{i}_{x}|^{2})dx  \vspace{0.1cm}\\
         \leq C\left(\displaystyle\int_{0}^{1}\rho_{0}^{2}(|y|^{2} + |p^{i}|^{2})dx + \displaystyle\int_{\mathcal{O}}\rho_{1}^{2}|h|^{2}dx + \displaystyle\int_{0}^{1}\rho_{2}^{2}(|H|^{2}+|H_{i}|^{2})dx\right) \vspace{0.1cm}\\
        + \ \dfrac{1}{2}\displaystyle\int_{0}^{1}\rho_{1}^{2}(|y_{t}|^{2}+|p^{i}_{t}|^{2})dx + |\widetilde{\mathcal{M}}|,
    \end{array}
\end{equation}
where $\widetilde{\mathcal{M}}= \frac{1}{2}\int_{0}^{1}(\rho_{1}^{2})_{t}\,\ell(0)a(x)(|y_{x}|^{2}+|p^{i}_{x}|^{2})dx$. Since $|\zeta_{t}|\leq C\zeta^{2}$, we have that $|(\rho^{2}_{1})_{t}|\leq C\hat{\rho}^{2}$. Hence, $|\widetilde{\mathcal{M}}| \leq C\int_{0}^{1}\hat{\rho}^{2}\,\ell(0)a(x)(|y_{x}|^{2}+|p^{i}_{x}|^{2})dx$. 
 
Thus, \eqref{third estimate} gives 
\begin{equation*}
    \begin{array}{l}
\dfrac{1}{2}\displaystyle\int_{0}^{1}\rho_{1}^{2}(|y_{t}|^{2}+|p^{i}_{t}|^{2})dx + \dfrac{1}{2}\dfrac{d}{dt}\displaystyle\int_{0}^{1}\rho_{1}^{2}\ell(0)a(x)(|y_{x}|^{2}+|p^{i}_{x}|^{2})dx  \vspace{0.1cm}\\
         \leq C\left(\displaystyle\int_{0}^{1}\rho_{0}^{2}(|y|^{2} + |p^{i}|^{2})dx + \displaystyle\int_{\mathcal{O}}\rho_{1}^{2}|h|^{2}dx + \displaystyle\int_{0}^{1}\rho_{2}^{2}(|H|^{2}+|H_{i}|^{2})dx\right) \vspace{0.1cm}\\
        \, + \, C\displaystyle\int_{0}^{1}\hat{\rho}^{2}\,\ell(0)a(x)(|y_{x}|^{2}+|p^{i}_{x}|^{2})dx.
    \end{array}
\end{equation*}
 Integrating the previous inequality from $0$ to $t$ and using the estimate \eqref{des Proposition 5} we arrive at
 \begin{equation}\label{primeira da des proposição 6}
     \begin{array}{c}
\displaystyle\sup_{[0,T]}(\rho_{1}^{2}\|\sqrt{a}y_{x} \|^{2}_{L^{2}(0,1)})+ \displaystyle\sup_{[0,T]}(\rho_{1}^{2}\|\sqrt{a}p^{i}_{x} \|^{2}_{L^{2}(0,1)}) + \displaystyle\int_{Q}\rho_{1}^{2}(|y_{t}|^{2}+|p^{i}_{t}|^{2})dxdt\\
\leq C \kappa_{1}(H,H_{1},H_{2},y_{0}).
     \end{array}
 \end{equation}

Finally, to conclude \eqref{des Proposition 6}, it remains to estimate $\int_{Q}\rho_{1}^{2}(|(\ell(0)a(x)y_{x})_{x}|^{2} + |(\ell(0)a(x)p^{i}_{x})_{x}|^{2})dxdt$. To do this, it is enough to multiply $\eqref{eq:linearized_system new}_{1}$ by $-\rho_{1}^{2}(\ell(0)a(x)y_{x})_{x}$ and $\eqref{eq:linearized_system new}_{2}$ by $-\rho_{1}^{2}(\ell(0)a(x)p^{i}_{x})_{x}$, integrate over $[0,1]$ and continue with the same previous reasoning. Indeed, after multiplications and integration we deduce 
\begin{equation*}
    \begin{array}{l}
         \dfrac{1}{2}\dfrac{d}{dt}\displaystyle\int_{0}^{1}\rho_{1}^{2}\ell(0)\,a(x)(|y_{x}|^{2}+|p^{i}_{x}|^{2})dx + \dfrac{1}{2}\displaystyle\int_{0}^{1}\rho_{1}^{2}(|(\ell(0)a(x)y_{x})_{x}|^{2} + |(\ell(0)a(x)p^{i}_{x})_{x}|^{2} )dx\vspace{0.1cm}\\
         \leq C\left(\displaystyle\int_{0}^{1}\rho_{0}^{2}(|y|^{2} + |p^{i}|^{2})dx + \displaystyle\int_{\mathcal{O}}\rho_{1}^{2}|h|^{2}dx + \displaystyle\int_{0}^{1}\rho_{2}^{2}(|H|^{2}+|H_{i}|^{2})dx\right) \vspace{0.1cm}\\
         +\, C\displaystyle\int_{0}^{1}\hat{\rho}^{2}\,\ell(0)a(x)(|y_{x}|^{2}+|p^{i}_{x}|^{2})dx + C\displaystyle\int_{0}^{1}\rho_{1}^{2}(|y_{t}|^{2} + |p^{i}_{t}|^{2})dx.
    \end{array}
\end{equation*}
Therefore, integrating over time and using estimates \eqref{des Proposition 5} and \eqref{primeira da des proposição 6}  we obtain
\begin{equation}\label{segunda da des proposição 6}
    \begin{array}{l}
\displaystyle\int_{Q}\rho_{1}^{2}(|(\ell(0)a(x)y_{x})_{x}|^{2} + |(\ell(0)a(x)p^{i}_{x})_{x}|^{2})dxdt  \leq C \kappa_{1}(H,H_{1},H_{2},y_{0}).     \end{array}
\end{equation}
From \eqref{primeira da des proposição 6} and \eqref{segunda da des proposição 6} we infer \eqref{des Proposition 6}.

This ends the proof.
\end{proof}

\section{Local null controllability of the nonlinear system }\label{control for nonlinear system}
We are interested in using Liusternik's inverse function theorem (on the right) to obtain our result. This theorem can be found in \cite{Alekseev} and is given below, where $B_{r}(0)$ and $B_{\delta}(\zeta_{0})$ are open ball, respectively of radius $r$ and $\delta$. All along this section we use the weights defined in \eqref{eq:weights_rhos}.
\begin{theorem}[Liusternik]\label{Liusternik}
Let $ \mathcal{Y}$ and $ \mathcal{Z}$ be Banach spaces and let $\mathcal{A}:B_{r}(0)\subset  \mathcal{Y}\rightarrow  \mathcal{Z}$ be a $C^{1}$ mapping. Let as assume that $\mathcal{A}^{\prime}(0)$ is onto and let us set $\mathcal{A}(0)=\zeta_{0}$. Then, there exist $\delta >0$, a mapping $W: B_{\delta}(\zeta_{0})\subset  \mathcal{Z}\rightarrow  \mathcal{Y}$ and a constant $K>0$ such that
\begin{equation*}
    W(z)\in B_{r}(0),\,\, \mathcal{A}(W(z))=z\,\, \text{and}\,\, \Vert W(z)\Vert_{ \mathcal{Y}}\leq K\Vert z-\zeta_{0}\Vert_{ \mathcal{Z}}\, \, \forall\, z\in B_{\delta}(\zeta_{0}).
\end{equation*}
In particular, $W$ is a local inverse-to-the-right of $\mathcal{A}$.
\end{theorem}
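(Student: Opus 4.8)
The plan is to prove this by the classical Graves successive-approximation scheme: I would construct $W(z)$ as the limit of a Newton-type iteration and read off the Lipschitz estimate from the geometric decay of the increments. First I would record the linear input. Set $\Lambda := \mathcal{A}'(0)$, a bounded surjective linear operator between the Banach spaces $\mathcal{Y}$ and $\mathcal{Z}$. By the open mapping theorem $\Lambda$ is open at the origin, so there exist a constant $c>0$ and a right-inverse selection $M:\mathcal{Z}\to\mathcal{Y}$ (not assumed linear or even continuous) with $\Lambda M w = w$ and $\|M w\|_{\mathcal{Y}} \le c^{-1}\|w\|_{\mathcal{Z}}$ for every $w\in\mathcal{Z}$; this is exactly the quantitative content of surjectivity combined with the open mapping theorem.

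Next I would isolate the nonlinear remainder by writing $\mathcal{A}(y) = \zeta_0 + \Lambda y + R(y)$, so that $R(0)=0$ and $R'(0)=0$. Since $\mathcal{A}$ is $C^1$, continuity of $y \mapsto \mathcal{A}'(y)$ at the origin lets me fix some $\epsilon \in (0,c)$ and a radius $r' \le r$ with $\|\mathcal{A}'(y) - \Lambda\| \le \epsilon$ on $B_{r'}(0)$; the mean value inequality then yields the Lipschitz bound $\|R(y_1)-R(y_2)\|_{\mathcal{Z}} \le \epsilon\,\|y_1 - y_2\|_{\mathcal{Y}}$ for all $y_1, y_2 \in B_{r'}(0)$.

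The heart of the argument is the iteration. Given a target $z \in B_\delta(\zeta_0)$, set $y_0 = 0$ and $y_{n+1} = y_n + M(z - \mathcal{A}(y_n))$, so that each increment $u_n := M(z-\mathcal{A}(y_n))$ satisfies $\Lambda u_n = z - \mathcal{A}(y_n)$. Substituting this into the splitting $\mathcal{A}(y_{n+1}) = \mathcal{A}(y_n) + \Lambda u_n + (R(y_{n+1}) - R(y_n))$ produces the exact cancellation $z - \mathcal{A}(y_{n+1}) = -(R(y_{n+1})-R(y_n))$, whence $\|z - \mathcal{A}(y_{n+1})\| \le \epsilon \|u_n\|$ and therefore $\|u_{n+1}\| \le c^{-1}\epsilon\,\|u_n\| =: q\,\|u_n\|$ with $q = \epsilon/c < 1$. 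Since $\|u_0\| \le c^{-1}\|z-\zeta_0\|$, the increments decay geometrically, giving $\sum_n \|u_n\| \le \tfrac{1}{c(1-q)}\|z-\zeta_0\|$ and residuals tending to zero. Choosing $\delta$ small enough that $\tfrac{1}{c(1-q)}\delta < r'$ keeps every $y_n$ inside $B_{r'}(0)$ (so the Lipschitz bound is valid at each step) and forces $\{y_n\}$ to be Cauchy; defining $W(z) := \lim_n y_n$ then gives $W(z)\in B_r(0)$, $\mathcal{A}(W(z))=z$ by continuity, and $\|W(z)\|_{\mathcal{Y}} \le \sum_n\|u_n\| \le K\|z-\zeta_0\|$ with $K = \tfrac{1}{c(1-q)}$.

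The main obstacle is that $M$ is in general only a bounded \emph{nonlinear} selection of a right inverse of $\Lambda$ (a bounded linear right inverse exists only when $\ker\Lambda$ is complemented), so the naive contraction-mapping shortcut applied to the increment is unavailable. The Graves scheme circumvents this precisely because $M$ is applied to the current residual to produce the increment directly: the identity $\Lambda u_n = z - \mathcal{A}(y_n)$ controls the new residual through the remainder $R$ alone, and linearity of $M$ is never invoked. The only points requiring care are verifying that the chosen $\delta$ keeps the whole orbit in the region where the Lipschitz estimate holds, and that $\mathcal{A}(y_n)\to z$ passes to the limit by continuity of $\mathcal{A}$.
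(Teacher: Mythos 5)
Your proof is correct. Note first that the paper itself does not prove this statement: Theorem \ref{Liusternik} is quoted as a classical result with a citation to \cite{Alekseev}, so there is no in-paper argument to compare against. What you have written is the standard Lyusternik--Graves successive-approximation proof, and all the steps check out: the open mapping theorem gives the bounded (generally nonlinear) right-inverse selection $M$ with $\Lambda M w = w$ and $\|Mw\|_{\mathcal Y}\le c^{-1}\|w\|_{\mathcal Z}$; the $C^1$ hypothesis plus the mean value inequality on the convex ball $B_{r'}(0)$ gives the small Lipschitz constant $\epsilon$ for the remainder $R$; and the identity $z-\mathcal{A}(y_{n+1})=-(R(y_{n+1})-R(y_n))$ is an exact consequence of $\Lambda u_n=z-\mathcal{A}(y_n)$, which drives the geometric decay $\|u_{n+1}\|\le (\epsilon/c)\|u_n\|$ without ever using linearity of $M$. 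The induction keeping the whole orbit inside $B_{r'}(0)$ (so that the Lipschitz bound remains applicable at every step) is handled correctly via $\sum_n\|u_n\|\le \tfrac{1}{c(1-q)}\|z-\zeta_0\|<r'$, and the limit passage $\mathcal{A}(y_n)\to z=\mathcal{A}(W(z))$ follows from continuity. Your closing remark correctly identifies why the Graves scheme, rather than a naive contraction on a linear right inverse, is the right tool in general Banach spaces where $\ker\Lambda$ need not be complemented; in the paper's actual application $\mathcal Y$ is a Hilbert space, so this subtlety is not strictly needed there, but your argument proves the theorem in the generality in which it is stated.
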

In order to do so, we define a map $\mathcal{A} : \mathcal{Y} \to \mathcal{Z}$ between suitable Banach spaces $\mathcal{Y}$ and $\mathcal{Z}$ whose definition and properties came from the controllability result of the linearized system and the additional estimates shown in Theorem \ref{theorem case linear}.  

Denoting 
\begin{equation}
    \left\{\begin{aligned}
      &H=y_t - (\ell(0) a(x) y_x)_x - h \mathds{1}_{\mathcal{O}} + \frac{1}{\mu_1} p^1 \mathds{1}_{\mathcal{O}_{1}} + \frac{1}{\mu_2} p^2 \mathds{1}_{\mathcal{O}_{2}},\\
      &H_{i}= -p_t^i - \left(\ell(0) a(x) p_x^i \right)_x  - \alpha_i y \mathds{1}_{\mathcal{O}_{i,d}}, \quad i=\lbrace1,2\rbrace,\\
\end{aligned}\right.
\end{equation}
let us define the space
\begin{eqnarray}\label{eq:espaceY}
\mathcal{Y}&=&\{(y,p^1,p^2,h)\in [L^{2}(Q)]^{3}\times L^{2}(\mathcal{O}\times (0,T)); \ y(\cdot,t), p^{1}(\cdot,t), p^{2}(\cdot,t) \nonumber\\ 
&\quad&\text{are absolutely continuous in}\ [0, 1],\ \text{a.e. in}\ [0, T], \ \rho_{1}h\in L^{2}( \mathcal{O} \times (0,T)),\nonumber \\
 &\quad&\rho_{0}y, \rho_{0}p^{i}, \rho_2 H \in L^2(Q),
 \rho_2H_{i} \in L^2(Q),\, y(1, t)\equiv y(0,t)\equiv0, \, \nonumber\\
 &\quad&  p^{i}(1,t)\equiv p^{i}(0,t)\,\equiv 0 \ \text{a.e in}\ [0, T], \forall i\in\lbrace1,2\rbrace,\,\, y(\cdot,0) \in H_a^1(0,1) \}.
\end{eqnarray}
Thus, $\mathcal{Y}$ is a Hilbert space for the norm $\Vert \cdot\Vert_{\mathcal{Y}}$, where
\begin{eqnarray*}
          \Vert (y,p^1,p^2,h)\Vert^{2}_{\mathcal{Y}} &=& \Vert \rho_{0}y\Vert^{2}_{L^{2}(Q)} + \Vert \rho_{0}p^{i}\Vert^{2}_{L^{2}(Q)} + \Vert\rho_{1}h\Vert^{2}_{L^{2}(\mathcal{O}\times (0,T))} + \Vert\rho_{2} H\Vert^{2}_{L^{2}(Q)}\\
          &\quad &+\, \Vert\rho_{2} H_{i}\Vert^{2}_{L^{2}(Q)} + \Vert y(\cdot,0)\Vert^{2}_{H^{1}_{a}(0,1)}.  
\end{eqnarray*}

Now, let us introduce the Banach space $\mathcal{Z} = \mathcal{F} \times \mathcal{F} \times \mathcal{F} \times H_a^1(0,1)$ such that 
$$\mathcal{F}=\{ z \in L^2(Q) \ ; \ \rho_2 z \in L^2(Q) \}.$$
Finally, consider the map $\mathcal{A} : \mathcal{Y} \to \mathcal{Z}$ such that
$(y,p^1,p^2,h) \mapsto \mathcal{A}(y,p^1,p^2,h) = (\mathcal{A}_0, \mathcal{A}_1, \mathcal{A}_2, \mathcal{A}_3)$ where the components are given by
\begin{equation}\label{aplicação A}
\left\{\begin{array}{l}
\mathcal{A}_0(y,p^1,p^2,h) = y_t - \left( a(x)\ell\left(\displaystyle\int_0^1 y dx\right) y_x \right)_x - h \mathds{1}_{\mathcal{O}} + \dfrac{1}{\mu_1} p^1 \mathds{1}_{\mathcal{O}_{1}} + \dfrac{1}{\mu_2} p^2 \mathds{1}_{\mathcal{O}_2},\\
\mathcal{A}_i(y,p^1,p^2,h) = -p_t^i - \left( a(x) \ell\left(\displaystyle\int_0^1 y dx\right) p_x^i \right)_x  +  \ell'\left(\displaystyle\int_0^1 y dx\right) \displaystyle\int_0^1 ( a(x) y_x \ p^i_x) dx\\
\hspace{3.1cm}- \alpha_i (y-y_{i,d}) \mathds{1}_{\mathcal{O}_{i,d}},\,\,\text{for}\,\, i=\lbrace1, 2\rbrace, \\
\mathcal{A}_3(y,p^1,p^2,h) = y(\cdot,0).
\end{array}\right.
\end{equation}

\color{black}
\begin{remark}
    Observe that, based on the definition of the set $\mathcal{Y}$, it is clear that estimates \eqref{estimate for solution}, \eqref{des Proposition 5} and \eqref{des Proposition 6} in Theorem \ref{theorem case linear} are bounded by the norms of the elements in $\mathcal{Y}$.
\end{remark}

Our goal is to prove that $\mathcal{A}:\mathcal{Y}\to\mathcal{Z}$ satisfies the conditions of Theorem \ref{Liusternik}, namely, that $\mathcal{A}$ is well defined and continuous, continuously differentiable, and that $\mathcal{A}^{\prime}(0,0,0,0)$ is onto. To this end, we first present a fundamental result that will be crucial for our analysis.

\begin{proposition}\label{Corollary 2}
There exists $C>0$ such that 
    \begin{equation}\label{eq:coro_2_est1}
        \begin{array}{l}
\displaystyle\int_{Q}\rho_{2}^{2}\left(\displaystyle\int_{0}^{1}\bar y dx\right)^{2}|(a(x)y_{x})_{x}|^{2}dxdt    +    \displaystyle\int_{Q}\rho_{2}^{2}\left(\displaystyle\int_{0}^{1} \bar p^{1} dx\right)^{2}|(a(x)p^{1}_{x})_{x}|^{2}dxdt\\
+ \displaystyle\int_{Q}\rho_{2}^{2}\left(\displaystyle\int_{0}^{1}\bar p^{2} dx\right)^{2}|(a(x)p^{2}_{x})_{x}|^{2}dxdt\leq C\|(y,p^{1},p^{2},h)\|^{2}_{\mathcal{Y}} \ \|(\bar y, \bar p^{1},\bar p^{2}, \bar h)\|^{2}_{\mathcal{Y}},
        \end{array}
    \end{equation}

and    
\begin{equation}\label{eq:coro_2_est2}
        \begin{array}{l}
\displaystyle\int_{Q}\rho_{2}^{2}\left(\displaystyle\int_{0}^{1}\bar y dx\right)^{2} \Vert \sqrt{a(x)}y_{x}\Vert_{L^2(0,1)}^{2} dxdt    +    \displaystyle\int_{Q}\rho_{2}^{2}\left(\displaystyle\int_{0}^{1}\bar y dx\right)^{2} \Vert \sqrt{a(x)}p^1_{x}\Vert_{L^2(0,1)}^{2} dxdt\\ + \displaystyle\int_{Q}\rho_{2}^{2}\left(\displaystyle\int_{0}^{1}\bar y dx\right)^{2} \Vert \sqrt{a(x)}p^2_{x}\Vert_{L^2(0,1)}^{2} dxdt \leq C\|(y,p^{1},p^{2},h)\|^{2}_{\mathcal{Y}} \ \|(\bar y,\bar p^{1}, \bar p^{2},\bar h)\|^{2}_{\mathcal{Y}},
        \end{array}
\end{equation}    
for any $(y,p^{1},p^{2},h), (\bar y,\bar p^{1}, \bar p^{2}, \bar h)\in \mathcal{Y}$.   
\end{proposition}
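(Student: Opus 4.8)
The plan is to exploit that the nonlocal factors $\int_0^1 \bar y\,dx$, $\int_0^1 \bar p^i\,dx$ and the weights $\rho_0,\rho_1,\rho_2,\hat\rho$ all depend only on $t$, so that both inequalities reduce to one–dimensional (in time) estimates after separating the $x$–integration. First I would apply the Cauchy--Schwarz inequality in $x$ to bound $\left(\int_0^1 \bar y\,dx\right)^2 \le \|\bar y(t)\|_{L^2(0,1)}^2$ (and likewise for $\bar p^i$), so that, for a representative summand, the left–hand side of \eqref{eq:coro_2_est1} is controlled by
\[
\int_0^T \rho_2^2\,\|\bar y(t)\|_{L^2(0,1)}^2\,\|(a y_x)_x(t)\|_{L^2(0,1)}^2\,dt,
\]
and analogously for \eqref{eq:coro_2_est2} with $\|(a y_x)_x\|^2$ replaced by $\|\sqrt a\, y_x\|^2$.

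The core idea is to combine the two \emph{different} pieces of information furnished by Theorem \ref{theorem case linear} and the preceding Remark. On one hand, the pointwise–in–time bound coming from \eqref{des Proposition 5},
\[
\|\bar y(t)\|_{L^2(0,1)}^2 \le C\,\hat\rho^{-2}(t)\,\|(\bar y,\bar p^1,\bar p^2,\bar h)\|_{\mathcal Y}^2
\]
(valid also for $\bar p^1,\bar p^2$), which I would pull out of the time integral. On the other hand, the integrated bounds $\int_Q \rho_1^2|(a y_x)_x|^2\,dxdt \le C\|(y,p^1,p^2,h)\|_{\mathcal Y}^2$ from \eqref{des Proposition 6} and $\int_Q \hat\rho^2\, a|y_x|^2\,dxdt \le C\|(y,p^1,p^2,h)\|_{\mathcal Y}^2$ from \eqref{des Proposition 5}. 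After extracting the factor $\hat\rho^{-2}$ this leaves the time integrals $\int_0^T \rho_2^2\hat\rho^{-2}\|(a y_x)_x(t)\|^2\,dt$ and $\int_0^T \rho_2^2\hat\rho^{-2}\|\sqrt a\,y_x(t)\|^2\,dt$, which I would close by the weight comparisons
\[
\rho_2^2\,\hat\rho^{-2} \le C\,\rho_1^2 \qquad\text{and}\qquad \rho_2^2\,\hat\rho^{-2}\le C\,\hat\rho^2.
\]

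These two weight inequalities are the real heart of the argument, and I expect them to be the only genuine obstacle. Using $\hat\rho^2=\rho_1\rho_0$ from \eqref{eq:compara_rhos}, the first is equivalent to $\rho_2^2\le C\rho_1^3\rho_0$ and the second to $\rho_2^2\le C\hat\rho^4$; substituting the explicit expressions from \eqref{eq:weights_rhos} and using $\zeta^*=\zeta_0\hat\zeta$, both reduce to checking that $e^{sA^*(t)}\,(\zeta^*(t))^{k}$ is bounded on $[0,T]$ for a suitable power $k$. Since $A^*<0$ with $A^*(t),\zeta^*(t)\to\infty$ in modulus like $1/m(t)$ as $t\to T$, the exponential factor $e^{sA^*}$ decays to zero faster than any power $(\zeta^*)^k$ grows, so the product tends to $0$ at $t=T$; away from $t=T$ it is continuous and, thanks to $m(0)>0$, finite at $t=0$, hence bounded on all of $[0,T]$. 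It is worth emphasizing that \eqref{eq:compara_rhos} alone does \emph{not} suffice here, since $\rho_2$ is the largest of the four weights, so the explicit form of the weights must genuinely be used.

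Finally I would assemble the pieces: combining the pulled–out factor $C\|(\bar y,\bar p^1,\bar p^2,\bar h)\|_{\mathcal Y}^2$ with the time integrals bounded by $C\|(y,p^1,p^2,h)\|_{\mathcal Y}^2$ yields \eqref{eq:coro_2_est1} via the first weight comparison together with \eqref{des Proposition 6}, and \eqref{eq:coro_2_est2} via the second weight comparison together with \eqref{des Proposition 5}. The three summands in each inequality are handled in exactly the same way, using the corresponding components of \eqref{des Proposition 5}--\eqref{des Proposition 6} for $p^1$ and $p^2$; note that in \eqref{eq:coro_2_est2} all three summands carry the same factor $\int_0^1\bar y\,dx$, so the only difference between them is which of $\|\sqrt a\,y_x\|$, $\|\sqrt a\,p^1_x\|$, $\|\sqrt a\,p^2_x\|$ appears, each controlled by the same $\hat\rho^2$–weighted estimate.
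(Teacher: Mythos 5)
Your proof is correct, but it follows a genuinely different route from the paper's. The paper first establishes a separate auxiliary result (Lemma \ref{lema para proposition}): it sets $q_0(t)=e^{-M/m(t)}\int_0^1 \bar y\,dx$ (and likewise for $\bar p^i$), shows $\|q_0\|_{H^1(0,T)}\le C\|(\bar y,\bar p^1,\bar p^2,\bar h)\|_{\mathcal Y}$ by estimating $q_0'$ with the $\rho_1^2|\bar y_t|^2$ bound from \eqref{des Proposition 6}, and then invokes the embedding $H^1(0,T)\subset L^\infty(0,T)$ to get $\sup_t e^{-2M/m(t)}\bigl(\int_0^1\bar y\,dx\bigr)^2\le C\|\cdot\|^2_{\mathcal Y}$; the proof of the Proposition then writes $\rho_2^2=\rho_1^2\cdot(\rho_2^2\rho_1^{-2})$, checks $\rho_2^2\rho_1^{-2}\le Ce^{-2M/m(t)}$ by the same exponential-versus-polynomial comparison you use, pulls out the sup, and closes with \eqref{des Proposition 6} (resp.\ \eqref{des Proposition 5} for the second inequality). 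You bypass Lemma \ref{lema para proposition} entirely: Cauchy--Schwarz gives $\bigl(\int_0^1\bar y\,dx\bigr)^2\le\|\bar y(t)\|^2_{L^2(0,1)}$, the $L^\infty$-in-time energy bound $\sup_t\hat\rho^2\|\bar y(t)\|^2_{L^2}\le C\|\cdot\|^2_{\mathcal Y}$ from \eqref{des Proposition 5} replaces the $H^1\hookrightarrow L^\infty$ step, and the weight comparisons $\rho_2^2\hat\rho^{-2}\le C\rho_1^2$ and $\rho_2^2\hat\rho^{-2}\le C\hat\rho^2$ (both of which I have checked reduce to the boundedness of $e^{sA^*}(\zeta^*)^{12}$ and $e^{sA^*}(\zeta^*)^{10}$ on $[0,T]$, and do hold) close the argument exactly as you describe. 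Both routes rest on the same two pillars --- the additional estimates \eqref{des Proposition 5}--\eqref{des Proposition 6} applied to arbitrary elements of $\mathcal Y$ (as asserted in the Remark following \eqref{aplicação A}) and the explicit exponential form of the weights, which, as you rightly stress, cannot be replaced by \eqref{eq:compara_rhos} alone. Your version is somewhat more economical for this Proposition; the paper's Lemma \ref{lema para proposition} earns its keep because it is reused verbatim several times in Appendix \ref{appendix B} to control factors of the form $\int_0^1(y_n-y)\,dx$, so if you adopted your route globally you would need to repeat the Cauchy--Schwarz/$\hat\rho$ device there as well.
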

\begin{proof}
    See Appendix \ref{appendix A}.
\end{proof}

Thus, we are in a position to prove that Theorem \ref{Liusternik} can be applied to the mapping $\mathcal{A}$ defined in \eqref{aplicação A}. We will verify this through the following three lemmas:

\begin{lemma}\label{A bem definido}
    Let $\mathcal{A}: \mathcal{Y}\rightarrow  \mathcal{Z}$ be given by \eqref{aplicação A}. Then, $\mathcal{A}$ is well defined and continuous. 
\end{lemma}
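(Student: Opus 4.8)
The plan is to decompose each component of $\mathcal{A}$ into a linear piece that is directly controlled by the $\mathcal{Y}$-norm through the definition \eqref{eq:espaceY}, plus genuinely nonlinear corrections that I estimate using Proposition \ref{Corollary 2} and the weighted a priori bounds of Theorem \ref{theorem case linear}. The last component $\mathcal{A}_3(y,p^1,p^2,h)=y(\cdot,0)$ needs nothing: membership $y(\cdot,0)\in H^1_a(0,1)$ and the bound $\|y(\cdot,0)\|_{H^1_a(0,1)}\le\|(y,p^1,p^2,h)\|_{\mathcal{Y}}$ are built into the definition of $\mathcal{Y}$. Throughout I write $s:=\int_0^1 y\,dx$ and repeatedly use that $\ell(s)$ depends on $t$ alone, so that $(a\ell(s)u_x)_x=\ell(s)(au_x)_x$ for any $u$.

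For the first component I would write $\mathcal{A}_0 = H - (\ell(s)-\ell(0))(ay_x)_x$, with $H$ the linear expression from \eqref{eq:espaceY}. Then $\rho_2 H\in L^2(Q)$ by definition of $\mathcal{Y}$, while for the correction I use $|\ell(s)-\ell(0)|\le\|\ell'\|_\infty|s|$ together with estimate \eqref{eq:coro_2_est1} of Proposition \ref{Corollary 2} evaluated on the diagonal $(\bar y,\bar p^1,\bar p^2,\bar h)=(y,p^1,p^2,h)$, which controls $\int_Q\rho_2^2 s^2|(ay_x)_x|^2\,dxdt$ by $C\|(y,p^1,p^2,h)\|_{\mathcal{Y}}^4$. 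Hence $\rho_2\mathcal{A}_0\in L^2(Q)$, i.e. $\mathcal{A}_0\in\mathcal{F}$.

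For the adjoint components I would decompose $\mathcal{A}_i = H_i - (\ell(s)-\ell(0))(ap^i_x)_x + \ell'(s)\int_0^1 a\,y_x p^i_x\,dx + \alpha_i y_{i,d}\mathds{1}_{\mathcal{O}_{i,d}}$. Here $\rho_2 H_i\in L^2(Q)$ by definition of $\mathcal{Y}$, the diffusion correction is treated exactly as for $\mathcal{A}_0$ via \eqref{eq:coro_2_est1}, and $\rho_2 y_{i,d}\in L^2$ is the standing hypothesis of Theorem \ref{thm:local_null_controllability}. The delicate term is the nonlocal coupling $M_i:=\ell'(s)\int_0^1 a\,y_x p^i_x\,dx$, which depends on $t$ only; boundedness of $\ell'$ and Cauchy--Schwarz in $x$ give $|M_i|\le\|\ell'\|_\infty\|\sqrt a\,y_x\|_{L^2(0,1)}\|\sqrt a\,p^i_x\|_{L^2(0,1)}$. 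Since, as recorded in the remark preceding this lemma, the weighted estimate \eqref{des Proposition 6} is available for every element of $\mathcal{Y}$, I obtain $\sup_{[0,T]}\rho_1^2\|\sqrt a\,y_x\|^2_{L^2(0,1)}+\sup_{[0,T]}\rho_1^2\|\sqrt a\,p^i_x\|^2_{L^2(0,1)}\le C\|(y,p^1,p^2,h)\|_{\mathcal{Y}}^2$; combining this with the comparison $\rho_1\le C\rho_2$ from \eqref{eq:compara_rhos} and checking $\int_0^T\rho_2^2\rho_1^{-4}\,dt<\infty$ (the exponential decay of $e^{sA^*}$ near $t=T$ dominates the polynomial growth of the $\zeta$-weights) yields $\rho_2 M_i\in L^2(Q)$, finishing the proof that $\mathcal{A}$ is well defined.

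For continuity I would bound $\|\mathcal{A}(y,p^1,p^2,h)-\mathcal{A}(\bar y,\bar p^1,\bar p^2,\bar h)\|_{\mathcal{Z}}$. The linear pieces and $\mathcal{A}_3$ are handled by linearity; in each nonlinear correction I add and subtract intermediate terms so that every difference carries either a factor $\int_0^1(y-\bar y)\,dx$, arising from $\ell(s)-\ell(\bar s)=\ell'(\xi)(s-\bar s)$, or a difference of gradients, and estimate these using the genuinely bilinear inequalities \eqref{eq:coro_2_est1}--\eqref{eq:coro_2_est2} --- whose formulation with two distinct arguments is tailored precisely for such difference quotients --- together with \eqref{des Proposition 6}. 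This produces a bound of the form $C\big(\|(y,\dots)\|_{\mathcal{Y}}+\|(\bar y,\dots)\|_{\mathcal{Y}}\big)\,\|(y,\dots)-(\bar y,\dots)\|_{\mathcal{Y}}$, i.e. local Lipschitz continuity on bounded sets. I expect the main obstacle to be the coupling term $M_i$: being quadratic in the degenerate gradients it is not covered by Proposition \ref{Corollary 2}, so one must route through the weighted a priori estimate \eqref{des Proposition 6} and verify integrability of the weight ratio $\rho_2^2\rho_1^{-4}$; keeping the correct powers of the Carleman weights $\rho_0,\rho_1,\rho_2$ straight is the main bookkeeping difficulty throughout.
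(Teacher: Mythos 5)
Your proposal is correct and follows essentially the same route as the paper: the same decomposition of each component into the linear piece $H$, $H_i$ (controlled by the definition of $\mathcal{Y}$) plus the nonlinear corrections, with the quadratic diffusion terms handled by estimate \eqref{eq:coro_2_est1} of Proposition \ref{Corollary 2} on the diagonal and the target term by the hypothesis $\rho_2\, y_{i,d}\in L^2(Q)$. Your treatment of the coupling term $M_i$ via \eqref{des Proposition 6} and the integrability of $\rho_2^2\rho_1^{-4}$ is a slightly more explicit piece of weight bookkeeping than the paper's one-line Cauchy--Schwarz bound for the corresponding term $I_5$, and the paper's own proof of this lemma in fact only establishes well-definedness, deferring continuity to the $C^1$ statement of Lemma \ref{DA continuo}, whereas you sketch it directly.
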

\begin{proof}
    We want to show that $\mathcal{A}(y,p^{1},p^{2},h)$ belongs to $ \mathcal{Z}$, for every $(y,p^{1},p^{2},h)\in  \mathcal{Y}$. We will therefore show that each $\mathcal{A}_{i}(y,p^{1},p^{2},h)$, with $i=\lbrace 0,1, 2, 3\rbrace$, defined in \eqref{aplicação A} belongs to its respective space. Note that, 
\begin{eqnarray*}
\|\mathcal{A}_{0}(y,p^{1},p^{2},h)\|^{2}_{\mathcal{F}}
&\leq& 2\int_{Q}\rho^{2}_{2}|H|^{2}dxdt  2\int_{Q}\rho^{2}_{2}\left|\left(\ell\left(\int_{0}^{1}ydx\right)a(x)y_{x}\right)_{x} -\left(\ell(0)a(x)y_{x}\right)_{x}\right|^{2}dxdt\\
             & = &  2 I_{1} + 2 I_{2}.    
\end{eqnarray*}
It is immediate by definition of the space $\mathcal{Y}$ that $I_{1}\leq C \|(y,p^{1},p^{2},h)\|^{2}_{\mathcal{Y}}$. Furthermore, since $\ell$ is Lipschitiz-continuous and applying Proposition \ref{Corollary 2}, inequality \eqref{eq:coro_2_est1}, we obtain
\begin{eqnarray*}
I_{2}=\int_{Q}\rho^{2}_{2}\left|\left[\ell\left(\int_{0}^{1}ydx\right) - \ell(0)\right](a(x)y_{x})_{x}\right|^{2}dxdt&\leq& C\int_{Q}\rho_{2}^{2}\left(\int_{0}^{1}ydx\right)^{2}|(a(x)y_{x})_{x}|^{2}dxdt\\
&\leq& C\|(y,p^{1},p^{2},h)\|^{4}_{\mathcal{Y}}.    
\end{eqnarray*}
Thus, $\mathcal{A}_{0}(y,p^{1},p^{2},h)\in \mathcal{F}.$

Now let us analyze $\mathcal{A}_{i}(y,p^{1},p^{2},h)$, with $i=\lbrace 1,2\rbrace$. Note the following:
\begin{eqnarray*}
\Vert\mathcal{A}_{i}(y,p^{1},p^{2},h)\Vert^{2}_{\mathcal{F}}
&\leq& 2\int_{Q}\rho^{2}_{2}|H_i|^{2}dxdt
+ 2\int_{Q}\rho^{2}_{2}\left| \left[ \ell\left(\int_{0}^{1}y\,dx\right) - \ell(0) \right] \left( a(x)p^i_{x}\right)_{x} \right|^{2}dxdt \\
&\quad& + \ 2\int_{Q}\rho^{2}_{2}\left| \ell'\left(\int_{0}^{1}y\,dx\right) \int_0^1 (a(x) y_x p^i_{x})dx \right|^{2}dxdt\\
&\quad&+
2\int_{Q}\rho^{2}_{2}\left| \alpha_i y_{i,d} \mathds{1}_{\mathcal{O}_{i,d}}\right|^{2}dxdt   =2I_{3} + 2I_{4}+2I_5+2I_6.   
\end{eqnarray*}
By definition of the space $\mathcal{Y}$ we have that $I_{3}\leq C \|(y,p^{1},p^{2},h)\|^{2}_{\mathcal{Y}}$. 
Since $\ell$ is Lipschitiz-continuous and applying inequality \eqref{eq:coro_2_est1} as above, we obtain $I_{4}\leq C \|(y,p^{1},p^{2},h)\|^{4}_{\mathcal{Y}}$. 
Moreover, by Proposition \ref{Corollary 2}, we obtain
\begin{eqnarray*}
I_5\leq C \int_Q \rho_2^2 \left| \int_0^1 (a(x) y_x p^i_x)dx \right|^2dxdt &\leq&  C\int_Q \rho_2^2  \left( \|\sqrt{a}y_x\|_{L^2(0,1)}^2 +\|\sqrt{a}p^i_x\|_{L^2(0,1)}^2\right)dxdt \\
&\leq& C \|(y,p^{1},p^{2},h)\|^{2}_{\mathcal{Y}}.    
\end{eqnarray*}
Finally, by the hypothesis $\rho^2 y_{i,d} \in L^2(Q)$, we have that $I_6$ is bounded. Thus $\mathcal{A}$ is well-defined.

\end{proof}
\begin{lemma}\label{DA continuo}
    The mapping $\mathcal{A}: \mathcal{Y}\longrightarrow  \mathcal{Z}$ is continuously differentiable.
\end{lemma}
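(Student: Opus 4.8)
The plan is to establish that $\mathcal{A}$ is $C^1$ in three moves: compute the Gâteaux derivative $D\mathcal{A}(y,p^1,p^2,h)$ componentwise, show it is a bounded linear operator from $\mathcal{Y}$ into $\mathcal{Z}$, and then prove that the assignment $(y,p^1,p^2,h)\mapsto D\mathcal{A}(y,p^1,p^2,h)$ is continuous in the operator norm of $\mathcal{L}(\mathcal{Y},\mathcal{Z})$. By the standard criterion, continuity of the Gâteaux derivative upgrades it to a Fréchet derivative and yields $\mathcal{A}\in C^1$.

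First I would differentiate formally. The terms in $\mathcal{A}$ that are linear in $(y,p^1,p^2,h)$ (the time derivatives $y_t,p^i_t$, the control terms $h\mathds{1}_{\mathcal{O}}$ and $\tfrac{1}{\mu_i}p^i\mathds{1}_{\mathcal{O}_i}$, and the observation terms $\alpha_i y\mathds{1}_{\mathcal{O}_{i,d}}$) reproduce themselves in the derivative, while the data $\alpha_i y_{i,d}\mathds{1}_{\mathcal{O}_{i,d}}$ drops out; all of these are obviously bounded by $\|\cdot\|_{\mathcal{Y}}$. Since $y\mapsto\int_0^1 y\,dx$ is a bounded linear functional and $\ell\in C^2$, the chain and product rules apply to the nonlocal coefficient. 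For $\mathcal{A}_0$, differentiating $(a\,\ell(\int_0^1 y\,dx)\,y_x)_x$ in the direction $\bar y$ gives
$$\Bigl(a\bigl[\ell(\textstyle\int_0^1 y\,dx)\,\bar y_x + \ell'(\int_0^1 y\,dx)\,(\int_0^1\bar y\,dx)\,y_x\bigr]\Bigr)_x.$$
For $\mathcal{A}_i$ the same rules applied to the terms $(a\,\ell(\int_0^1 y\,dx)\,p^i_x)_x$ and $\ell'(\int_0^1 y\,dx)\int_0^1 a\,y_x p^i_x\,dx$ produce contributions carrying $\ell,\ell',\ell''$ evaluated at $\int_0^1 y\,dx$, each multiplied either by $\int_0^1\bar y\,dx$ or by a linearization of the bilinear form $\int_0^1 a\,y_x p^i_x\,dx$ in $(\bar y,\bar p^i)$.

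Second, I would verify that $D\mathcal{A}(y,p^1,p^2,h)$ maps into $\mathcal{Z}$ with bounded norm, by an argument that mirrors Lemma \ref{A bem definido}: every term is measured in the $\rho_2$-weighted $L^2$ norm, the linear terms by the definition of $\mathcal{Y}$, and the genuinely nonlinear pieces by Proposition \ref{Corollary 2}. Specifically, factors of the form $(\int_0^1\bar y\,dx)(a\,\cdot_x)_x$ are controlled by \eqref{eq:coro_2_est1}, while the nonlocal quadratic factors of the form $(\int_0^1\bar y\,dx)\|\sqrt{a}\,\cdot_x\|_{L^2(0,1)}$ are controlled by \eqref{eq:coro_2_est2}. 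Together with the boundedness of $\ell,\ell',\ell''$ coming from $\ell\in C^2$ with bounded derivative, this bounds $\|D\mathcal{A}(y,p^1,p^2,h)\|_{\mathcal{L}(\mathcal{Y},\mathcal{Z})}$ by a constant times a polynomial in $\|(y,p^1,p^2,h)\|_{\mathcal{Y}}$.

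The main obstacle is the third step: operator-norm continuity of the derivative in the base point. Given two base points, their derivatives differ through scalar factors such as $\ell(\int_0^1 y^1\,dx)-\ell(\int_0^1 y^2\,dx)$, $\ell'(\int_0^1 y^1\,dx)-\ell'(\int_0^1 y^2\,dx)$ and the $\ell''$ analogue, and through differences of the coefficients $y^1_x-y^2_x$ and $p^{1,i}_x-p^{2,i}_x$ sitting inside the nonlocal terms. The scalar differences are handled by the Lipschitz continuity of $\ell,\ell',\ell''$ on bounded sets together with $|\int_0^1(y^1-y^2)\,dx|\le\|y^1-y^2\|_{L^2(0,1)}$; the coefficient differences require a bilinear (polarized) version of Proposition \ref{Corollary 2}, so that a difference of products is dominated by one fixed factor times the $\mathcal{Y}$-norm of the difference. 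Because all the resulting bounds are multilinear in the $\mathcal{Y}$-norms, they vanish as the two base points coalesce, giving continuity of $(y,p^1,p^2,h)\mapsto D\mathcal{A}(y,p^1,p^2,h)$. This identifies the Gâteaux derivative with the Fréchet derivative and shows $\mathcal{A}\in C^1$, completing the proof; the technical estimates underpinning this last step are collected in Appendix \ref{appendix B}.
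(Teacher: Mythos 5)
Your proposal follows essentially the same route as the paper: compute the candidate derivative $D\mathcal{A}$ termwise, control the nonlocal products via Proposition \ref{Corollary 2} (estimates \eqref{eq:coro_2_est1} and \eqref{eq:coro_2_est2}) together with the boundedness of $\ell,\ell',\ell''$, prove continuity of $(y,p^1,p^2,h)\mapsto D\mathcal{A}(y,p^1,p^2,h)$ into $\mathcal{L}(\mathcal{Y},\mathcal{Z})$ by polarized versions of the same estimates (the content of Appendix \ref{appendix B}), and conclude $C^1$ from the classical Gâteaux-to-Fréchet criterion. The only place you compress what the paper works out in detail is the verification that the difference quotients actually converge to $D\mathcal{A}$ in the $\rho_2$-weighted norm — the paper does this via the mean value theorem for $\ell$, Proposition \ref{Corollary 2}, and dominated convergence — but your plan contains all the needed ingredients, so this is a matter of detail rather than a gap.
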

\begin{proof}
First we prove that $\mathcal{A}$ is Gateaux differentiable at any $(y,p^1,p^2,h) \in \mathcal{Y}$ and let us compute the
$\textit{G-derivative}$ ${\mathcal{A}}^{\prime}(y, p^{1}, p{^2}, h)$.
Consider the linear mapping $D \mathcal{A}: \mathcal{Y} \to \mathcal{Z}$ given by
$$
D\mathcal{A}(y,p^1,p^2,h) = (D\mathcal{A}_0,D\mathcal{A}_1,D\mathcal{A}_2,D\mathcal{A}_3),
$$
where, for $i=\lbrace 1, 2 \rbrace$ and  $(\bar y, \bar p^1,\bar p^2,\bar h) \in \mathcal{Y}$,
\begin{equation}
\left\{\begin{aligned}
D\mathcal{A}_0(\bar y, \bar p^1,\bar p^2,\bar h) = &  \, \bar y_t - \ell'\left(\int_0^1 y \,dx\right) \int_0^1 \bar y \,dx (a(x)y_x)_x - \ell\left(\int_0^1 y\, dx\right) (a(x) \bar y_x)_x\\
& - \bar h \mathds{1}_\mathcal{O} + \frac{1}{\mu_1} \bar p^1 \mathds{1}_{\mathcal{O}_{1}} + \frac{1}{\mu_2} \bar p^2 \mathds{1}_{\mathcal{O}_{2}}, \\
D\mathcal{A}_i(\bar y, \bar p^1,\bar p^2,\bar h) =& - \bar p_t^i - \ell'\left(\int_0^1 y\, dx\right) \int_0^1 \bar y \,dx (a(x)p_x^i)_x - \ell\left(\int_0^1 y\, dx\right) (a(x) \bar p_x^i)_x \\
&+ \ell''\left(\int_0^1 y \,dx\right) \int_0^1 \bar y \,dx \left( \int_0^1 a(x) y_x p_x^i \,dx \right) \\
&+ \ell'\left(\int_0^1 y \,dx\right)\left(\int_0^1 a(x) \bar y_x p_x^i\, dx  + \int_0^1 a(x) y_x \bar p_x^i \,dx - \alpha_i \bar y \mathds{1}_{\mathcal{O}_{i,d}}\right), \\
D\mathcal{A}_3(\bar y, \bar p^1,\bar p^2,\bar h) =& \, \bar y(0).
\end{aligned}
\right.
\end{equation}
We have to show that, for $i=\lbrace 0, 1, 2, 3\rbrace$, 
$$
\frac{1}{\lambda}\left[ \mathcal{A}_i ((y,p^1,p^2,h)+\lambda(\bar y, \bar p^1,\bar p^2,\bar h)) - \mathcal{A}_i (y,p^1,p^2,h) \right] \rightarrow D\mathcal{A}_i(\bar y, \bar p^1,\bar p^2,\bar h),
$$
strongly in the corresponding factor of $\mathcal{Z}$ as $\lambda \to 0$.

Indeed, we have 
\begin{equation*}
\begin{split}
&\left\| \frac{1}{\lambda}\left[ \mathcal{A}_0 ((y,p^1,p^2,h)+\lambda(\bar y, \bar p^1,\bar p^2,\bar h)) - \mathcal{A}_0(\bar y, \bar p^1,\bar p^2,\bar h) \right] - D\mathcal{A}_0(\bar y, \bar p^1,\bar p^2,\bar h) \right\|^{2}_{\mathcal{F}} \\
&=\left\|
\bar y_t - \frac{1}{\lambda} \left(a(x) \ell\left(\int_0^1(y+\lambda\bar y)dx\right) (y_x + \lambda\bar y_x) - a(x)\ell\left(\int_0^1 y\,dx\right) y_x ) \right)_x \right. \\
&\left.\,\, -\bar h \mathds{1}_\mathcal{O} + \frac{1}{\mu_1} \bar p^1 \mathds{1}_{\mathcal{O}_{1}} + \frac{1}{\mu_2} \bar p^2 \mathds{1}_{\mathcal{O}_{2}} - D\mathcal{A}_0(\bar y, \bar p^1,\bar p^2,\bar h) \right\|^{2}_{\mathcal{F}} \\
&= \left\| - \frac{1}{\lambda} \left(a(x) \ell\left(\int_0^1(y+\lambda\bar y)dx\right) (y_x + \lambda\bar y_x) - a(x)\ell\left(\int_0^1 ydx\right) y_x  \right)_x\right. \\
& \ + \ell'\left(\int_0^1 y \,dx\right) \int_0^1 \bar y dx (a(x)y_x)_x ;
\left.\,\,  + \,\ell\left(\int_0^1 y \,dx\right) (a(x) \bar y_x)_x \right\|^{2}_{\mathcal{F}}
\end{split}
\end{equation*}
\begin{equation*}
\begin{split}
&\leq \left\| 
\left( a(x) \frac{1}{\lambda} \left( \ell\left(\int_0^1 (y+\lambda\bar y)dx\right) - \ell\left(\int_0^1 y\,dx\right)\right) y_x \right)_x - \left(a(x)\ell'\left(\int_0^1 y\,dx\right) \left( \int_0^1 \bar y\,dx\right) y_x \right)_x
\right\|^{2}_{\mathcal{F}} \\
&\,\,+ \left\| \ell\left(\int_0^1 (y+\lambda\bar y)dx\right) (a(x)\bar y_x)_x - \ell\left(\int_0^1 y\,dx\right)(a(x)\bar y_x)_x \right\|^{2}_{\mathcal{F}} \\
& \leq \int_Q \rho_2^2 \left| \frac{1}{\lambda} \left( \ell\left(\int_0^1 (y+\lambda\bar y)dx\right) - \ell\left(\int_0^1 y\,dx\right) \right) - \ell'\left(\int_0^1 y\,dx\right)\int_0^1 \bar y\, dx \right|^2 |(a(x)y_x)_x|^2 dxdt\\
&\,\, + \int_Q \rho_2^2 \left| \ell\left(\int_0^1 (y+\lambda\bar y)dx\right) - \ell\left(\int_0^1 y\,dx\right) \right|^2 |(a(x)\bar y_x)_x|^2 dxdt = J_1 + J_2.
\end{split}
\end{equation*}
By the mean value theorem, there exists $c(t) \in [\bar{a}(t),\bar{b}(t)]$, where $\bar{a}(t)=\min \{ \int_0^1 ydx , \int_0^1 (y +\lambda \bar y)dx \}$ and $\bar{b}(t) = \max \{ \int_0^1 ydx , \int_0^1 (y +\lambda \bar y)dx \}$, such that
$$J_1 = \int_Q \rho_2^2 \left| \ell'(c) \int_0^1 \bar y dx- \ell'\left(\int_0^1 y\,dx\right)\int_0^1 \bar y\, dx\right|^2 |(a(x)y_x)_x|^2 dxdt.$$
Thus, since $\ell$ has bounded derivatives, $J_1 \leq C \int_Q \rho_2^2 |\int_0^1 \bar y dx|^2 |c-\int_0^1 y dx|^2 |(a(x)y_x)_x|^2dxdt$. By Proposition \ref{Corollary 2} and by the dominated convergence theorem, we get that $J_1$ converges to zero as $\lambda \to 0$. 

Moreover, $J_2 \leq C \int_Q \rho_2^2 | \lambda \int_0^1 \bar y dx|^2 |(a(x)y_x)_x|^2dxdt$. Again by Proposition \ref{Corollary 2} and the dominated convergence theorem, we get that $J_2$ converges to zero as $\lambda \to 0$. 

The case $i=3$ is immediate. Now, for $i=\lbrace 1, 2\rbrace$, we have
\begin{equation}
\begin{split}
&\left\| \frac{1}{\lambda}\left[ \mathcal{A}_i ((y,p^1,p^2,h)+\lambda(\bar y, \bar p^1,\bar p^2,\bar h)) - \mathcal{A}_i (y,p^1,p^2,h) \right] - D\mathcal{A}_i(\bar y, \bar p^1,\bar p^2,\bar h) \right\|^{2}_{\mathcal{F}}\\
&=\left\|
\bar y_t - \frac{1}{\lambda} \left[ \left( a(x) \ell\left(\int_0^1(y+\lambda\bar y)dx\right) (p^i_x + \lambda\bar p^i_x) - a(x)\ell\left(\int_0^1 y\,dx\right) p^i_x  \right)_x \right. \right. \\
&\left. \,+ \ell'\left(\int_0^1 (y +\lambda \bar y)dx\right)\left(\int_0^1 a(x)(y_x+\lambda\bar y_x)(p^i_x+\lambda \bar p^i_x)dx \right) - \ell'\left(\int_0^1 y\,dx\right)\left(\int_0^1 a(x) y_x p^i_x dx \right)  \right.
\\
&\left.\, -\alpha_i \lambda \bar y \mathds{1}_{\mathcal{O}_{i,d}} \right] - \left. D\mathcal{A}_i(\bar{y}, \bar{p}^1,\bar{p}^2,\bar{h}) \right \|^{2}_{\mathcal{F}} \\
&= \left\| \frac{1}{\lambda} \left( a(x) \ell\left(\int_0^1(y+\lambda\bar y)dx\right) (p^i_x + \lambda\bar p^i_x) - a(x)\ell\left(\int_0^1 y\,dx\right) p^i_x  \right)_x \right.\\
& \left. \, - \ell'\left(\int_0^1 y \,dx\right) \int_0^1 \bar y dx (a(x)p_x^i)_x - \ell\left(\int_0^1 y \,dx\right) (a(x) \bar p_x^i)_x \right\|^{2}_{\mathcal{F}} \\
&\,+ \left\| \frac{1}{\lambda} \left[ \ell'\left(\int_0^1 (y +\lambda \bar y)dx\right)\left(\int_0^1 a(x)(y_x+\lambda\bar y_x)(p^i_x+\lambda \bar p^i_x)dx \right) - \ell'\left(\int_0^1 y\,dx\right)\left(\int_0^1 a(x) y_x p^i_x dx\right) \right] \right. \\
&\left. \, - \ell''\left(\int_0^1 y \,dx\right) \int_0^1 \bar y\, dx \left( \int_0^1 a(x) y_x p_x^i dx \right) - \ell'\left(\int_0^1 y\, dx\right) \int_0^1 a(x) \bar y_x p_x^i dx \right. \\
&\left. \, - \ell'\left(\int_0^1 y \,dx\right) \int_0^1 a(x) y_x \bar p_x^i dx \right\|^{2}_{\mathcal{F}} := J_3 + J_4.
\end{split}
\end{equation}
First, we have
\begin{equation*}
\begin{split}
J_3 &= \int_Q \rho_2^2 \left| \frac{1}{\lambda} \left( \ell\left(\int_0^1 (y +\lambda \bar y)dx\right) - \ell\left(\int_0^1 y\,dx\right) \right) (a(x)p^i_x)_x + \ell\left(\int_0^1 (y +\lambda \bar y)dx\right) (a(x)\bar p^i_x)_x \right. \\
&\left. \,\, - \ell'\left(\int_0^1 y \,dx\right) \int_0^1 \bar y \,dx (a(x)p_x^i)_x - \ell\left(\int_0^1 y \, dx\right) (a(x) \bar p_x^i)_x \right|^2 dxdt\\
&\leq C \int_Q \rho_2^2 \left| \frac{1}{\lambda} \left( \ell\left(\int_0^1 (y +\lambda \bar y)dx\right) - \ell\left(\int_0^1 y\,dx\right) \right) - \ell'\left(\int_0^1 y dx\right) \int_0^1 \bar y dx \right|^2 |(a(x)p^i_x)_x|^2dxdt \\
&\,\,+ C \int_Q \rho_2^2 \left| \ell\left(\int_0^1 (y +\lambda \bar y)dx\right) - \ell\left(\int_0^1 y\,dx\right) \right|^2 |(a(x)\bar p^i_x)_x|^2dxdt,
\end{split}
\end{equation*}
and by the same estimates used for $J_1$ and $J_2$, Proposition \ref{Corollary 2} and by the dominated convergence theorem, we get that $J_3$ converges to zero as $\lambda \to 0$.

Moreover,
\begin{equation*}
\begin{split}
J_4 \leq& \int_{Q} \rho_2^2 \left| \frac{1}{\lambda} \left( \ell'\left(\int_0^1 (y +\lambda \bar y)dx\right) - \ell'\left(\int_0^1 y\,dx\right) \right) - \ell''\left(\int_0^1 y\, dx\right) \int_0^1 \bar y \,dx \right|^2 \left| \int_0^1 a(x) y_x p^i_x\,dx \right|^2 dxdt \\
&+ \int_Q \rho_2^2 \left| \ell'\left(\int_0^1 (y +\lambda \bar y)dx\right) - \ell'\left(\int_0^1 y\,dx\right) \right|^2 \left|\int_0^1 a(x) \bar y_x p^i_x \,dx\right|^2 dxdt \\
&+ \int_Q \rho_2^2 \left| \ell'\left(\int_0^1 (y +\lambda \bar y)dx\right) - \ell'\left(\int_0^1 y\,dx\right) \right|^2 \left|\int_0^1 a(x) y_x \bar p^i_x \,dx\right|^2 dxdt \\
&+ \lambda^2 \int_Q \rho_2^2 \left| \ell'\left(\int_0^1 (y +\lambda \bar y)dx\right) \right|^2 \left|\int_0^1 a(x) \bar y_x \bar p^i_x \,dx\right|^2 dxdt = K_1 + K_2 + K_3 + K4. 
\end{split}
\end{equation*}
Again by the mean value theorem there exists $c(t)$ such that
$$
K_1 \leq C \int_Q \rho_2^2 \left(\int_0^1 \bar y\,dx \right)^2 \left| c - \int_0^1 y dx \right|^2 \left| \int_0^1 a(x) y_x p^i_x\,dx \right|^2 dxdt.
$$
By Hölder inequality, 
$
K_1 \leq \int_Q \rho_2^2 \left(\int_0^1 \bar y\,dx \right)^2 \left| c - \int_0^1 y dx \right|^2 \left( \|\sqrt{a}y_x\|_{L^2(0,1)}^2 + \|\sqrt{a} p^i_x\|_{L^2(0,1)}^2 \right)
$, and using Proposition \ref{Corollary 2}, estimate \eqref{eq:coro_2_est2},  and the dominated convergence theorem, we have that $K_1 \to 0$ as $\lambda\to 0$.

Analogously, again by estimate \eqref{eq:coro_2_est2} and the dominated convergence theorem, we have that, for $j=\lbrace 2, 3, 4\rbrace$, $K_j \to 0$  as $\lambda\to 0$.
This finish the proof that $\mathcal{A}$ is Gateaux differentiable, with a \textit{G-derivative} $\mathcal{A^{\prime}}(y,{p^{1}},p^{2},h)= D\mathcal{A}(y,{p^{1}},p^{2},h)$.

To conclude the lemma, it is necessary to show that the mapping $ (y,p^{1},p^{2},h) \longmapsto \mathcal{A}'(y,p^{1},p^{2},h)$
is continuous from $\mathcal{Y}$ into $\mathcal{L}(\mathcal{Y},\mathcal{Z})$. Consequently, in light of classical results, it follows that $\mathcal{A}$ is Fr\'echet differentiable, and therefore of class $C^{1}$, which completes the proof of the lemma. The full proof, with all details, can be found in Appendix \ref{appendix B}.
\end{proof}

\begin{lemma}\label{Mapa sobrejetivo}
Let $\mathcal{A}$ be the mapping in \eqref{aplicação A}. Then, $\mathcal{A}^{\prime}(0,0,0,0)$ is onto.
\end{lemma}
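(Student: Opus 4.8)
The plan is to observe that $\mathcal{A}'(0,0,0,0)$ coincides with the linear operator that sends a state--control tuple to the residual of the linearized system \eqref{eq:linearized_system} together with its initial trace; once this identification is made, surjectivity onto $\mathcal{Z}=\mathcal{F}\times\mathcal{F}\times\mathcal{F}\times H_a^1(0,1)$ is an immediate consequence of the global null controllability result of Theorem \ref{theorem case linear}. The work therefore splits into evaluating the Gâteaux derivative at the origin, invoking Theorem \ref{theorem case linear} to produce a preimage, and checking that this preimage actually lies in $\mathcal{Y}$.

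First I would specialize the formula for $D\mathcal{A}$ obtained in Lemma \ref{DA continuo} to the base point $(y,p^1,p^2,h)=(0,0,0,0)$. There $\int_0^1 y\,dx=0$, so $\ell(\int_0^1 y\,dx)=\ell(0)$, and, crucially, every term carrying a factor $(a(x)y_x)_x$, $(a(x)p^i_x)_x$ or an integral $\int_0^1 a(x)y_x p^i_x\,dx$ vanishes identically because $y\equiv p^i\equiv0$ at the base point. Thus the multiplicative and nonlocal couplings drop out entirely, leaving for $i=\{1,2\}$
\begin{equation*}
\left\{\begin{aligned}
&\mathcal{A}'(0,0,0,0)(\bar y,\bar p^1,\bar p^2,\bar h)_0 = \bar y_t - (\ell(0)a(x)\bar y_x)_x - \bar h\mathds{1}_{\mathcal{O}} + \frac{1}{\mu_1}\bar p^1\mathds{1}_{\mathcal{O}_{1}} + \frac{1}{\mu_2}\bar p^2\mathds{1}_{\mathcal{O}_{2}},\\
&\mathcal{A}'(0,0,0,0)(\bar y,\bar p^1,\bar p^2,\bar h)_i = -\bar p^i_t - (\ell(0)a(x)\bar p^i_x)_x - \alpha_i \bar y\mathds{1}_{\mathcal{O}_{i,d}},\\
&\mathcal{A}'(0,0,0,0)(\bar y,\bar p^1,\bar p^2,\bar h)_3 = \bar y(\cdot,0),
\end{aligned}\right.
\end{equation*}
which is exactly the operator attached to \eqref{eq:linearized_system} (with $H,H_1,H_2$ playing the role of the right-hand sides and $\bar y(\cdot,0)$ the initial datum).

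Next, given an arbitrary target $(H,H_1,H_2,y_0)\in\mathcal{Z}$, I would apply Theorem \ref{theorem case linear} with the data $H,H_1,H_2$ (these satisfy $\rho_2H,\rho_2H_1,\rho_2H_2\in L^2(Q)$ by definition of $\mathcal{F}$) and with $y_0\in H_a^1(0,1)\subset L^2(0,1)$. The theorem furnishes a leader control $\bar h$ and associated states $(\bar y,\bar p^1,\bar p^2)$ solving \eqref{eq:linearized_system} with $\bar y(\cdot,0)=y_0$; by the computation above this tuple satisfies $\mathcal{A}'(0,0,0,0)(\bar y,\bar p^1,\bar p^2,\bar h)=(H,H_1,H_2,y_0)$, so every element of $\mathcal{Z}$ is attained provided the preimage belongs to $\mathcal{Y}$.

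The remaining and most delicate step is precisely this membership verification, where the weights built into $\mathcal{Y}$ in \eqref{eq:espaceY} must be matched against those produced by Theorem \ref{theorem case linear}. Estimate \eqref{estimate for solution} yields $\rho_0\bar y,\rho_0\bar p^i\in L^2(Q)$ and $\rho_1\bar h\in L^2(\mathcal{O}\times(0,T))$; the requirements $\rho_2H,\rho_2H_i\in L^2(Q)$ hold because these are the prescribed data; the homogeneous lateral conditions and the absolute continuity of $\bar y(\cdot,t),\bar p^i(\cdot,t)$ in $x$ follow from $\bar y,\bar p^i\in L^2(0,T;H_a^1(0,1))$ given by Theorem \ref{theorem case linear} and Proposition \ref{Regularidade para linear system}; and $\bar y(\cdot,0)=y_0\in H_a^1(0,1)$ by hypothesis, which activates \eqref{des Proposition 6} and supplies the extra regularity needed to close the $\mathcal{Y}$-norm. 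Collecting these gives $\|(\bar y,\bar p^1,\bar p^2,\bar h)\|_{\mathcal{Y}}^2\le C\,\kappa_1(H,H_1,H_2,y_0)$, so the preimage lies in $\mathcal{Y}$ and $\mathcal{A}'(0,0,0,0)$ is onto. I expect the only genuine obstacle to be bookkeeping: confirming that $\rho_0,\rho_1,\rho_2$ in the definition of $\mathcal{Y}$ are literally the weights appearing in Theorem \ref{theorem case linear}, which is exactly why the auxiliary estimates \eqref{des Proposition 5}--\eqref{des Proposition 6} were recorded there rather than relying on a bare null-controllability statement.
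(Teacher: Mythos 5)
Your proposal is correct and follows essentially the same route as the paper: identify $\mathcal{A}'(0,0,0,0)$ with the linearized operator of \eqref{eq:linearized_system}, invoke Theorem \ref{theorem case linear} to produce a preimage of an arbitrary $(H,H_1,H_2,y_0)\in\mathcal{Z}$, and check via \eqref{estimate for solution}, \eqref{des Proposition 5}--\eqref{des Proposition 6} and Proposition \ref{Regularidade para linear system} that this preimage lies in $\mathcal{Y}$. Your write-up is in fact somewhat more careful than the paper's, which states the membership $(y,p^1,p^2,h)\in\mathcal{Y}$ without spelling out the weight bookkeeping.
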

\begin{proof}
Let $(H, H_{1}, H_{2}, y_{0})\in \mathcal{Z}$. From Theorem \ref{theorem case linear} we know there exists $y, p^{1}, p^{2}$ satisfying \eqref{eq:linearized_system} and \eqref{estimate for solution}. Furthermore, we know that $y, p^{1}, p^{2}\in C^{0}([0,T];L^{2}(0,1))\cap L^{2}(0,T;H^{1}_{a})$. Consequently, $(y,p^{1},p^{2},h)\in \mathcal{Y}$ and $$\mathcal{A}^{\prime}(0,0,0,0)\cdot(y,p^{1},p^{2},h)=(H,H_{1}, H_{2},y_{0}).$$ 
\hfill

This ends the proof.
\end{proof}
\noindent\textbf{Proof of Theorem \ref{thm:local_null_controllability}.}

\noindent According to Lemmas \ref{A bem definido}--\ref{Mapa sobrejetivo} we can apply the Inverse Mapping Theorem (Theorem \ref{Liusternik}) and consequently there exists $\delta > 0$ and a mapping $W:B_{\delta}(0)\subset {\mathcal{Z}}\rightarrow {\mathcal{Y}}$ such that
\begin{equation*}
    W(z)\in B_{r}(0)\,\,\, \text{and}\,\,\, {\mathcal{A}}(W(z))=z, \,\,\, \forall z\in B_{\delta}(0).
\end{equation*}
Taking $(0,0,0,y_{0})\in B_{\delta}(0)$ and $(y,p^{1},p^{2},h)=W(0,0,0,y_{0})\in {\mathcal{Y}}$, we have
\begin{equation*}
    {\mathcal{A}}(y,p^{1},p^{2},h)=(0,0,0,y_{0}).
\end{equation*}
Thus, we conclude that \eqref{eq:optimality_system} is locally null controllable at time $T > 0$.

\section{Additional comments and open questions}\label{sec:final_remarks}
\subsection{Controllability of a radial equation}
Let $\Omega \subset \R^n$, $n \geq 1$, be a bounded radial domain (euclidian ball) with boundary $\partial \Omega$. Let us denote $Q = \Omega \times (0,T)$ with lateral boundary $\Sigma = \partial \Omega \times (0,T)$. We are interested in the null controllability of a degenerate, nonlocal parabolic radial equation generalizing our one-dimensional result.  We start considering the following equation generalizing \eqref{eq:PDE}.
\begin{equation}
\label{eq:PDEradial}
\left\{\begin{aligned}
&y_t - \nabla \cdot \left(a(|x|) \ell\left(\int_{\Omega} y dx\right)  \nabla y \right) = h \mathds{1}_{\mathcal{O}} + v^1 \mathds{1}_{\mathcal{O}_{1}} + v^2 \mathds{1}_{\mathcal{O}_{2}} & \text{in } Q, \\
&y=0 & \text{on } \Sigma, \\
&y(\cdot,0) = y_{0} & \text{in } \Omega,
\end{aligned}
\right.
\end{equation}
with radial initial condition $y_{0}$, radial controls $h$ and $v^i$, $i=1,2$, and radially symmetric annular control domains $\mathcal{O}$, $\mathcal{O}_{1}$ and $\mathcal{O}_{2}$.

Using the radial symmetry we get an equation applied to the function  $z(r,t) = y(x,t)$, where $r=|x|$. In this radial case the $n$-dimensional Laplacian is given by $\Delta z = \frac{1}{r^{n-1}} (r^{n-1} z_r)_{r}$, with radial controls $h_R(r,t)=h(x,t)$, $v_R^i(r,t)=v^i(x,t)$, for $i=1,2$. Then, the left-hand side of equation \eqref{eq:PDEradial}$_1$ becomes
$y_t - \ell(\int_\Omega y\, dx) (a(r)  z_{rr} + a'(r) z_r + \frac{n-1}{r} a(r) z_r )$ and \eqref{eq:PDEradial} can be written as
\begin{equation*}
\left\{\begin{aligned}
&y_t - \ell\left(\int_\Omega y \,dx\right) (a(r) z_r)_r - \ell\left(\int_\Omega y\, dx\right) a(r) \frac{n-1}{r} z_r  = h_R \mathds{1}_{\mathcal{O}} + v_R^1 \mathds{1}_{\mathcal{O}_{1}} + v_R^2 \mathds{1}_{\mathcal{O}_{2}} & \text{in } Q, \\
&y=0 & \text{on } \Sigma, \\
&y(\cdot,0) = y_{0} & \text{in } \Omega.
\end{aligned}
\right.
\end{equation*}
Thus, our Theorem \ref{thm:local_null_controllability} can be applied to show the null controllability of the radially symmetric problem:
\begin{equation}
\label{eq:PDEradial2}
\left\{\begin{aligned}
&y_t - \nabla \cdot \left(a(|x|) \ell\left(\int_\Omega y\, dx\right)  \nabla y \right)\\ 
&+ \ell\left(\int_\Omega y\, dx\right) a(|x|) \nabla y \cdot \frac{(n-1)x}{|x|^2} = h \mathds{1}_{\mathcal{O}} + v^1 \mathds{1}_{\mathcal{O}_{1}} + v^2 \mathds{1}_{\mathcal{O}_{2}} & \text{in } Q, \\
&y(x,t)=0 & \text{on } \Sigma, \\
&y(\cdot,0) = y_{0} & \text{in } \Omega.
\end{aligned}
\right.
\end{equation}

\subsection{The strong degenerate case and boundary problem}\label{Subsection strong degenerate}
As described in the introduction, the assumptions made on the function $a$ lead us to the so-called weakly degenerate case. Consequently, Dirichlet boundary conditions are appropriate for the problem considered in \eqref{eq:PDE}. However, the constraints on $K$ can alter the behavior of $a$ and, therefore, modify the boundary conditions. In this sense, we can have another characterization, called the \textit{strongly degenerate condition}, defined in \cite{Alabau_cannarsa_fragnelli-06}, i.e., the function $a\in C^{1}([0,1])$ satisfies \eqref{conditionunder-a(x)} and \eqref{K-estimate-a(x)} with $K\in [1,2)$, where
\begin{equation*}\label{eq:theta_conditions}
\left\{
\begin{aligned}
&\exists\, \theta \in (1,K] \ \text{such that} \ \theta a \le x a' \ \text{near zero, if } K>1,\\[4pt]
&\exists\, \theta \in (0,1) \ \text{such that} \ \theta a \le x a' \ \text{near zero, if } K=1.
\end{aligned}
\right.
\end{equation*}
In that case, the natural boundary condition imposed at $x = 0$ for problem \eqref{eq:PDE} would be of Neumann type. In particular, suppose given $K\in [0,1)\cup [1,2)$ and $a$ satisfies \textit{strongly and weakly degenerate condition} in appropriate part, we can encompass both cases in the following system:
\begin{equation}\label{eq:systemcomplete-WS}
\left\{
\begin{aligned}
&y_{t} -  \left(a(x)\ell\!\left( \int_{0}^{1} y \, dx \right)y_x\right)_x  = h\mathds{1}_{\mathcal{O}}+v^{1}\mathds{1}_{\mathcal{O}_{1}}+v^{2}\mathds{1}_{\mathcal{O}_{2}} 
&&\text{in}&&  Q, \\
&y(1,t) = 0 \ \ \ \text{and}\ \ \ \left\{
\begin{aligned}
&y(0,t) = 0, && \text{(Weak)} \\
&\text{or}\\
&(a y_x)(0,t) = 0, && \text{(Strong)}
\end{aligned}
\right. &&\text{on}&&  (0,T), \\
&y(\cdot,0) = y_{0}(x), &&\text{in}&& (0,1).
\end{aligned}
\right.
\end{equation}
In context of hierarchical control to system \eqref{eq:systemcomplete-WS}, we can mention the results obtained in \cite{araruna2018stackelberg} where the authors treated the case $a(x)=x^{\gamma}$ with $\gamma\in[0,2)$ and $\ell\equiv1$. In particular, it is believed that analogous results could also be established for the strongly degenerate case for \eqref{eq:PDE}.

An interesting question that arises after considering the influence of the parameter $K$ on the boundary is when the controls can act on it. In this regard, it is necessary to carefully examine the influence of the parameter and identify on which part of the boundary the controls should be located. Indeed, suppose $x=0$, $\ell\equiv 1$, and $v^{1}=0=v^{2}$. Then, if the control $h$ acts at the boundary and $a$ satisfies the \textit{weakly degenerate condition} the system is controllable. However, if $a$ satisfies the \textit{strongly degenerate condition}, in particular when the control $h$ is located at the singular part of the boundary, even a certain kind of unique continuation is not known to hold; see, for instance, \cite{gueye2014exact}. On the other hand, when the control $h$ acts at the point $x=1$, we have that $a$ is not degenerate. In this case, we consider an extended system defined in $(0,1+\delta)\times(0,T)$ and take $\mathcal{O}\subset(1,1+\delta)$ so that the control $h$ is the solution of the extended system restricted to $x = 1$; see, for instance, \cite{Alabau_cannarsa_fragnelli-06}. From the point of view of hierarchical boundary controllability, all these questions remain open. This motivates some interesting open questions about the hierarchical boundary controls. 

Let $\ell:\mathbb{R}\to\mathbb{R}$ and $a$ satisfying the \text{weakly (or strongly) degenerate condition}. Consider the distributed and boundary controls system:
\begin{equation}\label{eq:boundary1}
	\left\{\begin{aligned}
		&y_t - \left(\ell(g)a(x)y_x\right)_x +F(y) =v^{1}\mathds{1}_{\mathcal{O}_{1}}+v^{2}\mathds{1}_{\mathcal{O}_{2}}  &&\text{in}&& Q,\\ 
        &y(0,t)=0, \ \ y(1,t)=h(t)    &&\text{on}&& (0,T),\\ 
		&y(\cdot,0) = y_0 &&\text{in}&& (0,1),
	\end{aligned}
	\right.
\end{equation}
and now suppose that $a$ satisfies the \text{weakly  degenerate condition} 
\begin{equation}\label{eq:boundary2}
	\left\{\begin{aligned}
		&y_t - \left(\ell(g)a(x)y_x\right)_x+F(y) =h\mathds{1}_{\mathcal{O}}  &&\text{in}&& Q,\\ 
        &y(0,t)=v^{1}(t), \ \ y(1,t)= v^{2}(t)    &&\text{on}&& (0,T),\\ 
		&y(\cdot,0) = y_0 &&\text{in}&& (0,1).
	\end{aligned}
	\right.
\end{equation}
where $F:\mathbb{R}\to\mathbb{R}$ is a  locally
Lipschitz-continuous function, $g$ is the nonlocal term, $h$ and $v^{1},v^{2}$ are controls leader and followers respectively. This problems was introduced in \cite{araruna2020hierarchical} when $\ell(g)$ and $a(x)=1$, in context of hierarchical control, applying the
Stackelberg-Nash strategy for parabolic system.

Another interesting question is to consider the system \eqref{eq:PDE} with only one follower, say $v^{1}$, and reverse the roles of the leader and the follower, following the ideas of \cite{calsavara2022new}. More precisely, for each leader control $h$, we associate the unique solution $v^{1}(h)$ of an extremal problem related to a cost functional that depends on the follower, such that the corresponding state $y$, associated with $h$ and $v^{1}(h)$, satisfies the terminal condition $y(\cdot, T) = 0$ in the interval $(0,1)$. Subsequently, we seek an admissible control $\hat{h}$ that minimizes a suitable cost functional associated with the leader's control. This approach makes the problem more challenging due to the class constraints imposed on the leader control. For other variations of this approach applied to the proposed problem, see Section 5 of \cite{calsavara2022new}.

\subsection{Hierarchical controllability of other degenerate systems}

We are interested in the hierarchical controllability of the following parabolic systems:
\begin{equation}
\label{eq:PDEopen1}
\left\{\begin{aligned}
&y_t - \mathcal{B}y = h\mathds{1}_{\mathcal{O}}+v^{1}\mathds{1}_{\mathcal{O}_{1}}+v^{2}\mathds{1}_{\mathcal{O}_{2}} &&\text{in}&& Q, \\
&y(0,t)=y(1,t)=0 &&\text{on}&& (0,T), \\
&y(\cdot,0) = y_{0} &&\text{in}&& (0,1),
\end{aligned}
\right.
\end{equation}
where the operator $\mathcal{B}$ can be considered in the following cases: 
\begin{itemize}
    \item[i)]\textit{Degenerate parabolic operator with  nonlocal coefficient - semi-linear - case:} 
    
    \begin{equation*}
    \mathcal{B}y=\left(a(x)\ell\left(\int_{0}^{1}y\ dx\right)y_{x}\right)_{x}+F(y)    
    \end{equation*}
   where $\ell:\mathbb{R}\to\mathbb{R}$ with $\ell(0)>0$ and $F:\mathbb{R}\to\mathbb{R}$ is a  locally
Lipschitz-continuous function;
    \item[ii)]\textit{Degenerate quasi-linear parabolic operator:}
    \begin{equation*}
    \mathcal{B}y=\nabla \cdot (a(\beta) b(y) \nabla y),    
    \end{equation*}
where $b=b(r)$ is a real function of class $C^3$ such that
\begin{equation*}
0  < b_0 \leq b(r) \leq b_1 \ \ \text{and} \ \  |b'(r)|+|b''(r)|+|b'''(r)| \leq M,\ \  \text{for any} \ \ r \in \mathbb{R}.    
\end{equation*}
Here, one can analyse the case  $\beta=x$ and the radial case $\beta=|x|$ with radial $y_{0}$.      
\end{itemize}

\color{black}

\section{Appendix}
\appendix
\renewcommand{\theequation}{\thesection.\arabic{equation}}
\section{- Proof of Proposition \ref{Corollary 2}}\label{appendix A}
We state the following lemma, which will be fundamental for the demonstration of Proposition \ref{Corollary 2}.
\begin{lemma}\label{lema para proposition}
    Define $\nu(x)=e^{\lambda(|\Psi|_\infty + \Psi)}-e^{3\lambda|\Psi|_\infty}$ and $\bar{\nu}=\displaystyle\max_{[0,1]}\nu(x)$. There exists $s>0$ such that, if $s\bar{\nu}<M<0$ then
    \begin{equation*}
        \begin{array}{l}
\displaystyle\sup_{t\in[0,T]}\left\{ e^{\frac{-2M}{m(t)}}\left[\left(\displaystyle\int_{0}^{1}y\,dx\right)^{2}+\left(\displaystyle\int_{0}^{1}p^{1}\,dx\right)^{2}+\left(\displaystyle\int_{0}^{1}p^{2}\,dx\right)^{2}
\right] \right\}\leq C\|(y,p^{1},p^{2},h)\|^{2}_{\mathcal{Y}},
        \end{array}
    \end{equation*}
    for all $(y,p^{1},p^{2},h)\in\mathcal{Y}$.
\end{lemma}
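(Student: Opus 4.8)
The plan is to reduce the claimed supremum bound to the $L^\infty$-in-time weighted energy estimate \eqref{des Proposition 5}, which is available for every element of $\mathcal{Y}$ by the Remark following the definition of $\mathcal{A}$, the bridge between the two being a purely pointwise comparison of the scalar weights $e^{-2M/m(t)}$ and $\hat\rho(t)^2$. First I would discard the spatial averages by Cauchy--Schwarz: for $u\in\{y,p^1,p^2\}$ and a.e.\ $t$,
\[
\left(\int_0^1 u(x,t)\,dx\right)^2 \le \int_0^1 |u(x,t)|^2\,dx = \|u(t)\|_{L^2(0,1)}^2,
\]
so that it suffices to bound $\sup_{t}\, e^{-2M/m(t)}\big(\|y(t)\|_{L^2(0,1)}^2 + \|p^1(t)\|_{L^2(0,1)}^2 + \|p^2(t)\|_{L^2(0,1)}^2\big)$.

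The main point is then the weight comparison. Recall $A(x,t)=\tau(t)\nu(x)$ with $\tau=1/m$, hence $A^*(t)=\tau(t)\bar\nu$ and $\hat\rho^2 = e^{-2sA^*}(\zeta^*)^{-6} = e^{-2s\bar\nu\tau}(\zeta^*)^{-6}$, where $\zeta^*(t)=\tau(t)\max_{[0,1]}\vartheta$ is a fixed constant times $\tau(t)$. Therefore
\[
\frac{e^{-2M\tau}}{\hat\rho^2} = e^{2(s\bar\nu - M)\tau}\,\tau^6\,\Big(\max_{[0,1]}\vartheta\Big)^6.
\]
Since $\nu<0$ on $[0,1]$ (because $|\Psi|_\infty+\Psi\le 2|\Psi|_\infty<3|\Psi|_\infty$) we have $\bar\nu<0$, and the hypothesis $s\bar\nu<M$ gives $s\bar\nu-M<0$. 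As $t$ ranges over $[0,T)$, $\tau(t)=1/m(t)$ is bounded below (because $m(0)>0$ and $m$ is continuous and positive on $[0,T)$) and blows up only as $t\to T$; on this range the strictly decaying exponential $e^{2(s\bar\nu-M)\tau}$ dominates the polynomial factor $\tau^6$, so the quotient stays bounded. Hence there is $C>0$ with $e^{-2M/m(t)}\le C\,\hat\rho(t)^2$ for all $t\in[0,T)$.

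Finally I would invoke \eqref{des Proposition 5}. For any $(y,p^1,p^2,h)\in\mathcal{Y}$ the triple $(y,p^1,p^2)$ solves the linearized system with right-hand sides $H,H_i$ and datum $y(\cdot,0)$, and
\[
\kappa_0\big(H,H_1,H_2,y(\cdot,0)\big)=\|\rho_2 H\|^2_{L^2(Q)}+\|\rho_2 H_1\|^2_{L^2(Q)}+\|\rho_2 H_2\|^2_{L^2(Q)}+\|y(\cdot,0)\|^2_{L^2(0,1)}\le \|(y,p^1,p^2,h)\|_{\mathcal{Y}}^2 ,
\]
so \eqref{des Proposition 5} yields $\sup_t \hat\rho^2\big(\|y(t)\|_{L^2(0,1)}^2+\|p^1(t)\|_{L^2(0,1)}^2+\|p^2(t)\|_{L^2(0,1)}^2\big)\le C\|(y,p^1,p^2,h)\|_{\mathcal{Y}}^2$. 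Combining the three steps gives the stated inequality.

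The main obstacle, beyond bookkeeping, is the weight comparison of the second paragraph: one must check that the strict exponential gap produced by $s\bar\nu<M$ genuinely absorbs the polynomial factor $(\zeta^*)^{-6}\sim\tau^{-6}$ \emph{uniformly} over $[0,T)$. This is precisely why the hypothesis is a strict inequality and why $m$ is chosen with $m(0)>0$, so that $\tau$ blows up only at the single endpoint $t=T$ and the quotient above is continuous and bounded on the whole interval.
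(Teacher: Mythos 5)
Your argument is correct, but it follows a genuinely different route from the paper's. The paper (following \cite{DemarqueLimacoViana_deg_eq2018,DemarqueLimacoViana_deg_sys2020}) introduces the scalar functions $q_0(t)=e^{-M/m(t)}\int_0^1 y\,dx$ and $q_i(t)=e^{-M/m(t)}\int_0^1 p^i\,dx$, proves $\|q_i\|_{H^1(0,T)}\le C\|(y,p^1,p^2,h)\|_{\mathcal{Y}}$ (the derivative term forces it to invoke the bound on $\int_Q\rho_1^2(|y_t|^2+|p^i_t|^2)$ coming from \eqref{des Proposition 6}, together with a pointwise-in-$x$ weight comparison $e^{-2M/m(t)}\le Ce^{-2sA}\zeta^{-8}\le C\rho_1^2$ obtained by choosing $k$ with $2s(\bar\nu-\nu)>k$), and then concludes via the embedding $H^1(0,T)\subset L^\infty(0,T)$. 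You instead discard the spatial average by Cauchy--Schwarz, compare the purely temporal weights $e^{-2M/m(t)}$ and $\hat\rho(t)^2=e^{-2s\bar\nu\tau}(\zeta^*)^{-6}$ directly (the exponential gap $e^{2(s\bar\nu-M)\tau}$ absorbing $\tau^6$ uniformly since $\tau$ is bounded below on $[0,T)$ and blows up only at $t=T$), and then quote the sup-in-time estimate \eqref{des Proposition 5}, which the Remark after the definition of $\mathcal{Y}$ makes available for every element of $\mathcal{Y}$. Your version is shorter and needs only the first-order energy estimate \eqref{des Proposition 5} rather than the time-derivative bounds of \eqref{des Proposition 6}; its only cost is that it leans on the already-established $L^\infty_t$ weighted estimate, whereas the paper's $H^1\hookrightarrow L^\infty$ device is self-contained at the level of the scalar functions $q_i$ and is the mechanism used in the cited prior works. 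Both arguments rest equally on the (unproved but asserted) fact that the estimates of Theorem \ref{theorem case linear} extend to arbitrary elements of $\mathcal{Y}$, so no additional gap is introduced.
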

\begin{proof}
Following the ideas of \cite{DemarqueLimacoViana_deg_eq2018,DemarqueLimacoViana_deg_sys2020}, for each $(y,p^1,p^2,h) \in \mathcal{Y}$, consider, for $i=0,1,2$, the functions $q_i:[0,T]\to \R$, given by
$$
q_0(t) = e^{-\frac{M}{m(t)}} \int_0^1 y\, dx \qquad \text{and} \qquad q_i(t) = e^{-\frac{M}{m(t)}} \int_0^1 p^i\, dx,
$$
where $m(t)$ was defined in \eqref{eq:def_m}. Taking $k>0$, we have that $e^{-\frac{k}{m(t)}} \leq 8! \left(\frac{m(t)}{k}\right)^8$, for any $t \in [0,T]$. Since $A=\frac{\nu(x)}{m(t)}$, taking $s>0$ such that $2s(\bar \nu - \nu)>k$, we have
$$
-\frac{2M}{m(t)} + 2sA = - \frac{2M}{m(t)} + \frac{2s \nu}{m(t)} < \frac{-2s(\bar \nu - \nu)}{m(t)} < -\frac{k}{m(t)}. 
$$
Thus, $e^{-\frac{2M}{m(t)}} \leq e^{-2sA}e^{-\frac{k}{m(t)}} \leq C e^{-2sA} m(t)^8 \leq C_1 e^{-2sA} \zeta^{-8}  \leq C_2 \rho_1^2$, for any $t \in [0,T]$.
As a consequence, we claim that $\|q_i\|_{H^1(0,T)} \leq C\|(y,p^1,p^2,h)\|_\mathcal{Y}$. In fact, using that $\rho_1^2 \leq C \rho_0^2$,
\begin{eqnarray*}
\|q_0\|^{2}_{L^2(0,T)} &\leq& \displaystyle\int_0^T \int_0^1 e^{-\frac{2M}{m(t)}} |y|^2 dxdt\leq C\int_0^T \int_0^1 \rho_1^2 |y|^2 dxdt\leq C\displaystyle\int_0^T \int_0^1 \rho_0^2 |y|^2dxdt\\ 
&\leq& C \|(y,p^1,p^2,h)\|^{2}_\mathcal{Y}.    
\end{eqnarray*}
On the other hand,
\begin{eqnarray*}
\|q'_0\|^{2}_{L^2(0,T)} &\leq& \int_0^T \left| e^{-\frac{M}{m(t)}} \frac{M m'(t)}{m(t)^2} \int_0^1 y\,dx + e^{-\frac{M}{m(t)}} \int_0^1 y_t\,dx \right|^2 dt  \\
&\leq& C\int_0^T |q_0|^{2} \frac{M^2}{m^4(t)}dt + C \int_0^T \int_0^1 e^{-\frac{2M}{m(t)}} |y_t|^2 dxdt= K_1 + K_2.    
\end{eqnarray*}
Using that $M > s \bar \nu \geq s\nu = s A m(t)$ we have that $e^{-\frac{M}{m(t)}} \leq e^{-s A}$. Thus, 
$$K_2 \leq C \int_0^T \int_0^1 e^{-2sA} |y_t|^2 dxdt\leq C \int_0^T \int_0^1 \rho_1^2 |y_t|^2dxdt.$$
For $K_1$ we have
\begin{eqnarray*}
K_1 &\leq&C \displaystyle\int_0^T e^{-\frac{2M}{m(t)}} \int_0^1 |y|^2dx \frac{M^2}{m(t)^4}dt 
\leq C_1 \displaystyle\int_0^T \int_0^1 e^{-\frac{2M}{m(t)}}\frac{1}{m(t)^4} |y|^2dxdt\\ &=& C_2 \int_0^T \int_0^1 e^{-2sA}\zeta^{-4} |y|^2 dxdt.    
\end{eqnarray*}
Since $e^{-2sA}\zeta^{-4} = \rho_0^2 \leq C \rho_2^2$, we get $K_1 \leq C_3 \int_0^T \int_0^1 \rho_2^2 |y|^2dxdt$ and, $\|q_0'\|_{L^2(0,T)} \leq C \|(y,p^1,p^2,h)\|_\mathcal{Y}$. Thus proves the claim.
By analogous estimates for $q_1$ and $q_2$ we get
$$
\|q_0\|_{H^1(0,T)} + \|q_1\|_{H^1(0,T)} + \|q_2\|_{H^1(0,T)} \leq  C \|(y,p^1,p^2,h)\|_\mathcal{Y},
$$
and the conclusion follows from the immersion $H^1(0,T) \subset L^\infty(0,T)$.
\end{proof}

\begin{proof}[Proof of Proposition \ref{Corollary 2}.]
Take $(y,p^1,p^2,h)$, $(\bar y, \bar p^1, \bar p^2, \bar h) \in \mathcal{Y}$ and let $M < 0$ be the constant in Lemma \ref{lema para proposition}. Since $\rho_2^2 \rho_1^{-2} = \zeta^6$, we have
$$
\int_Q \rho_2^2 \left(\int_0^1 \bar y\,dx \right)^2 |(a(x)y_x)_x|^2 dxdt = \int_Q \zeta^6 \left(\int_0^1 \bar y\,dx \right)^2 \rho_1^2 |(a(x)y_x)_x|^2 dxdt.
$$
We claim that there exists a constant $C>0$ such that $\zeta^6 \leq C e^{-\frac{2M}{m(t)}}$. Indeed, $\zeta^6 e^{\frac{2M}{m(t)}}$ is a continuous function on $[0,T]$ (since $M<0$, it  converges to zero when $t \to T$). Thus, using Lemma \ref{lema para proposition} and Theorem \ref{theorem case linear}, estimate \eqref{des Proposition 6}, we have
\begin{equation*}
\begin{split}
\int_Q \rho_2^2 \left(\int_0^1 \bar y\,dx \right)^2 |(a(x)y_x)_x|^2 dxdt &\leq C \sup_{t\in [0,T]} \left\{ e^{-\frac{2M}{m(t)}} \left(\int_0^1 \bar y\,dx \right)^2  \right\} \int_Q \rho_1^2 |(a(x)y_x)_x|^2 dxdt \\
&\leq C \|(\bar y,\bar p^1,\bar p^2, \bar h)\|^{2}_\mathcal{Y} \|(y, p^1, p^2, h)\|^{2}_\mathcal{Y}.    
\end{split}    
\end{equation*}
In view of Lemma \ref{lema para proposition} and \eqref{des Proposition 6}, similar estimates also hold for 
$\int_Q \rho_2^2 \left(\int_0^1 \bar p^i\,dx \right)^2 |(a(x) p^i_x)_x|^2 dxdt$ for $i=\lbrace1,2\rbrace$. This finishes the proof of \eqref{eq:coro_2_est1}.

To prove \eqref{eq:coro_2_est2}, analogously we have
\begin{equation*}
    \begin{array}{l}
\displaystyle\int_{Q}\rho_{2}^{2}\left(\displaystyle\int_{0}^{1}{\bar y}\,dx \right)^{2} \Vert \sqrt{a(x)}y_{x}\Vert_{L^2(0,1)}^{2}dxdt \\
\leq \displaystyle\sup_{t\in [0,T]} 
\left\{ e^{-\frac{2M}{m(t)}} \left(\int_0^1 \bar y\,dx \right)^2 \right\} \int_0^T \int_0^1 \rho_1^2  \Vert \sqrt{a(x)}y_{x}\Vert_{L^2(0,1)}^{2}dxdt.
    \end{array}
\end{equation*}
Thus, by Lemma \ref{lema para proposition}, \eqref{eq:compara_rhos} and \eqref{des Proposition 5} 
we get
\begin{equation}
\label{eq:estimativa_anexo}
\begin{split}
\int_{Q}\rho_{2}^{2}\left(\displaystyle\int_{0}^{1}{\bar y}\,dx \right)^{2} \Vert \sqrt{a(x)}y_{x}\Vert_{L^2(0,1)}^{2} dxdt&\leq C \|(\bar y,\bar p^1,\bar p^2, \bar h)\|^{2}_\mathcal{Y} \int_0^T \rho_1^2 \int_0^1 a(x) |y_x|^2 dx dt\\
&\leq C \|(\bar y,\bar p^1,\bar p^2, \bar h)\|^{2}_\mathcal{Y} \int_0^T \int_0^1 \hat\rho^2 a(x) |y_x|^2 dx dt\\ 
&\leq C \|(\bar y,\bar p^1,\bar p^2, \bar h)\|^{2}_\mathcal{Y} \|(y, p^1, p^2, h)\|^{2}_\mathcal{Y}.
\end{split}    
\end{equation}

We proceed analogously to bound $\int_{Q}\rho_{2}^{2}\left(\int_{0}^{1}{\bar y}\, dx\right)^{2} \Vert \sqrt{a(x)}p^i_x \Vert_{L^2(0,1)}^{2}dxdt$, for $i=1,2$.
\end{proof}
\renewcommand{\theequation}{\thesection.\arabic{equation}}
\section{- Continuity of the map $\mathcal{A}$ }\label{appendix B}
Let be given $(y,p^{1},p^{2},h)\in \mathcal{Y}$  and consider a sequence  $((y_{n},p^{1}_{n},p^{2}_{n},h_{n}))_{n=0}^{\infty}$ which converges to  $(y,p^{1},p^{2},h)$ in $\mathcal{Y}$. For each $(\bar{y},\bar{p}^{1},\bar{p}^{2},\bar{h})\in B_{r}(0)$,
\begin{eqnarray*}
D\mathcal{A}_0(y,p^{1},p^{2},h)\cdot(\bar y, \bar p^1,\bar p^2,\bar h) &=& \bar y_t - \ell'\left(\int_0^1 y\, dx\right)\left( \int_0^1 \bar y \,dx\right) (a(x)y_x)_x\\ 
&\quad&- \ell\left(\int_0^1 y \,dx\right) (a(x) \bar y_x)_x -\bar{h} \mathds{1}_{\mathcal{O}} + \frac{1}{\mu_1} \bar{p}^{1} \mathds{1}_{\mathcal{O}_{1}} + \frac{1}{\mu_2} \bar{p}^{2} \mathds{1}_{\mathcal{O}_{2}},
\end{eqnarray*}
and if $i=\lbrace 1,2\rbrace$
\begin{equation*}
\begin{array}{l}
D\mathcal{A}_i(y,p^{1},p^{2},h)\cdot(\bar y, \bar p^1,\bar p^2,\bar h)=-\bar{p}_t^i - \ell'\left(\displaystyle\int_0^1 y \, dx\right)\left(\displaystyle\int_0^1 \bar y \, dx\right) (a(x)p_x^i)_x\\
\quad -\ell\left(\displaystyle\int_0^1 y \, dx\right) (a(x) \bar p_x^i)_x
 + \ell''\left(\displaystyle\int_0^1 y \, dx\right) \displaystyle\int_0^1 \bar y \, dx \left( \displaystyle\int_0^1 a(x) y_x p_x^i\, dx \right)\\
\quad+ \ell'\left(\displaystyle\int_0^1 y \, dx\right) \displaystyle\int_0^1 a(x) \bar y_x p_x^i \, dx + \ell'\left(\displaystyle\int_0^1 y \, dx\right) \displaystyle\int_0^1 a(x) y_x \bar p_x^i \, dx
-\alpha_i \bar y \mathds{1}_{\mathcal{O}_{i,d}}.
\end{array}
\end{equation*}
Then, for each $i = \lbrace 0, 1, 2, 3\rbrace$ we have
\begin{equation}\label{convergence of D}
    \begin{array}{l}
\|(D\mathcal{A}_i(y_{n},p^{1}_{n},p^{2}_{n},h_{n})-D\mathcal{A}_i(y,p^{1},p^{2},h))\cdot(\bar y, \bar p^1,\bar p^2,\bar h)\|_{\mathcal{Z}}\to 0,\,  
\text{as}\,\, n\to\infty. 
\end{array}
\end{equation}
Indeed, for $i=0$,
\begin{equation*}
\begin{array}{l}
(D\mathcal{A}_0(y_{n},p^{1}_{n},p^{2}_{n},h_{n})-D\mathcal{A}_0(y,p^{1},p^{2},h))\cdot(\bar y, \bar p^1,\bar p^2,\bar h)\\
 \displaystyle = - \ell''\left(\int_{0}^{1} y\,dx\right)\int_{0}^{1}(y_{n}-y)\,dx
 \left(\int_{0}^{1}\bar{y} \, dx \right)(a(x)y_{n,x})_x \\
 
 \quad \displaystyle-\ell'\left(\int_{0}^{1} y\,dx\right) \left(\int_{0}^{1}\bar{y} \, dx \right) (a(x)(y_{n,x}-y_{x}))_{x}\\ 
\quad \displaystyle -\ell'\left(\int_{0}^{1} y\,dx\right)\int_{0}^{1}(y_{n}-y)\,dx\, (a(x)\bar{y}_x)_x=X^{1}_{1}+X^{1}_{2}+X^{1}_{3}. 
\end{array}
\end{equation*}
For $X^{1}_{1}$, by Lemma \ref{lema para proposition} and Proposition \ref{Corollary 2}, we have 
\begin{eqnarray*}
\int_{0}^{T}\int_{0}^{1}\rho^{2}_{2}|X_{1}^{1}|^{2} dxdt
&\leq& C\int_{0}^{T}\int_{0}^{1}\rho_{2}^{2}
\left(\int_{0}^{1}(y_{n}-y)\,dx\right)^2
 \left(\int_{0}^{1}\bar{y} \, dx \right)^2 |(a(x)y_{n,x})_x|^2 dxdt\\
 &\leq&C\left(\int_{0}^{T}\int_{0}^{1}\rho_{2}^{2}\left(\int_{0}^{1}(y_{n}-y)\,dx\right)^2\left(\int_{0}^{1}\bar{y} \, dx \right)^2 |(a(x)(y_{n,x}-y_{x}))_x|^2 dxdt\right.\\ &\quad&+\left.\int_{0}^{T}\int_{0}^{1}\rho_{2}^{2}\left(\int_{0}^{1}(y_{n}-y)\,dx\right)^2\left(\int_{0}^{1}\bar{y} \, dx \right)^2 |(a(x)y_{x})_x|^2 dxdt\right)\\
 &\leq& C\sup_{t\in[0,T]}\left\{ e^{\frac{-2M}{m(t)}}\left(\int_{0}^{1}(y_{n}-y)\,dx\right)^{2}\right\}\left(\int_{0}^{T}\int_{0}^{1}\rho_{1}^{2}\left(\int_{0}^{1}\bar{y} \, dx \right)^2 \right.\\ &\quad&\left.|(a(x)(y_{n,x}-y_{x}))_x|^2 dxdt+\int_{0}^{T}\int_{0}^{1}\rho_{1}^{2}\left(\int_{0}^{1}\bar{y} \, dx \right)^2 |(a(x)y_{x})_x|^2 dxdt\right)\\
 &\leq&C \|(y_{n}-y),(p^{1}_{n}-p^{1}),(p^{2}_{n}-p^{2}),(h_{n}-h)\|^{2}_{\mathcal{Y}}\,\|(\bar y, \bar p^{1},\bar p^{2}, \bar h)\|^{2}_{\mathcal{Y}}\left\lbrace\|y,p^{1},p^{2},h\|^{2}_{\mathcal{Y}}\right.\\ &\quad&+\|(y_{n}-y),(p^{1}_{n}-p^{1}),(p^{2}_{n}-p^{2}),(h_{n}-h)\|^{2}_{\mathcal{Y}} \left.\right\rbrace \rightarrow 0,
\end{eqnarray*}
where was it used that $\rho_{2}^{2}=\zeta^{6}\rho_{1}^{2}\leq C e^{\frac{2M}{m(t)}}$.

For $X^{1}_{2}$, by Proposition \ref{Corollary 2},
\begin{eqnarray*}
\int_{0}^{T}\int_{0}^{1}\rho^{2}_{2}|X_{2}^{1}|^{2} dxdt&\leq& C \int_{0}^{T}\int_{0}^{1}\rho_{2}^{2}
 \left(\int_{0}^{1}\bar{y} \, dx \right)^2 |(a(x)(y_{n,x}-y_{x}))_{x}|^2 dxdt \\
&\leq&C\|(y_{n}-y),(p^{1}_{n}-p^{1}),(p^{2}_{n}-p^{2}),(h_{n}-h)\|^{2}_{\mathcal{Y}}\, \|(\bar y, \bar p^{1},\bar p^{2}, \bar h)\|^{2}_{\mathcal{Y}}\rightarrow 0.
\end{eqnarray*}
For $X^{1}_{3}$, by  Lemma \ref{lema para proposition} and again by Proposition \ref{Corollary 2}, we have
\begin{equation*}
\begin{array}{l}
\displaystyle\int_{0}^{T}\int_{0}^{1}\rho^{2}_{2}|X_{3}^{1}|^{2} dxdt\leq C \int_{0}^{T}\int_{0}^{1}\rho_{2}^{2}
    \left(\int_{0}^{1}(y_{n}-y)\,dx\right)^2 |(a(x)\bar{y}_x)_x|^2 dxdt\\
  \displaystyle  \leq C\sup_{t\in[0,T]}\left\{ e^{\frac{-2M}{m(t)}}\left(\int_{0}^{1}(y_{n}-y)\,dx\right)^{2}\right\}\int_{0}^{T}\int_{0}^{1}\rho_{1}^{2} |(a(x)\bar{y}_{x})_{x}|^{2}\,dxdt\\
  \displaystyle  \leq C\|(y_{n}-y),(p^{1}_{n}-p^{1}),(p^{2}_{n}-p^{2}),(h_{n}-h)\|^{2}_{\mathcal{Y}}\|y,p^{1},p^{2},h)\|^{2}_{\mathcal{Y}} \|(\bar y, \bar p^{1},\bar p^{2}, \bar h)\|^{2}_{\mathcal{Y}}\rightarrow 0.
    \end{array}
\end{equation*}
Now consider the term $D\mathcal{A}_{i}$, for $i=\lbrace 1, 2\rbrace$, we have
\begin{equation*}
\begin{array}{ll}
    (D\mathcal{A}_i(y_{n},p^{1}_{n},p^{2}_{n},h_{n})-D\mathcal{A}_i(y,p^{1},p^{2},h))\cdot(\bar y, \bar p^1,\bar p^2,\bar h)= X_{1}^{2}+X_{2}^{2}+X_{3}^{2}+X_{4}^{2}+X_{5}^{2},
    \end{array}
\end{equation*}
where
\begin{eqnarray*}
X^{2}_{1}&=&\ell''\left(\int_{0}^{1} y\,dx\right)\int_{0}^{1}(y_{n}-y)\, dx\int_{0}^{1}\bar{y}\, dx (a(x)p_{n,x}^{i})_{x}\\
&\quad&+\ell'\left(\int_{0}^{1} y\,dx\right)\int_{0}^{1}\bar{y}\, dx\ (a(x)(p_{n,x}^{i}-p_{x}^{i}))_{x}=X^{2}_{11}+X^{2}_{12} \\
X^{2}_{2}&=& \ell'\left(\int_{0}^{1} y\,dx\right)\int_{0}^{1}(y_{n}-y) dx\ (a(x)\bar{p}^{i}_{x})_{x}=X^{2}_{21}\\   
X^{2}_{3}&=&\left(\ell''\left(\int_{0}^{1} y_{n}\,dx\right)-\ell''\left(\int_{0}^{1} y\,dx\right)\right)\int_{0}^{1}\bar{y}\, dx\displaystyle\int_{0}^{1}a(x)y_{n,x}p_{n,x}^{i}\, dx\\
&\quad&+ \ell''\left(\int_{0}^{1} y\,dx\right)\int_{0}^{1}\bar{y}\, dx\displaystyle\int_{0}^{1}a(x)(y_{n,x}-y_{x})p_{n,x}^{i}\, dx\\ 
&\quad& + \ell''\left(\int_{0}^{1} y\,dx\right) \int_{0}^{1}\bar{y}\, dx\int_{0}^{1}a(x)(p_{n,x}^{i}-p_{x}^{i})y_{n,x}\, dx=X^{2}_{31}+X^{2}_{32}+X^{2}_{33};\\
X^{2}_{4}&=&\ell''\left(\int_{0}^{1} y\,dx\right)\int_{0}^{1}(y_{n}-y)\,dx\int_{0}^{1}a(x)\bar{y}_{x}p_{n,x}^{i}\ dx\\
&\quad&+ \ell'\left(\int_{0}^{1} y\,dx\right)\int_{0}^{1}a(x)\bar{y}_{x}(p_{n,x}^{i}-p_{x}^{i})\, dx = X^{2}_{41}+X^{2}_{42};\\
X^{2}_{5}&=&\ell''\left(\int_{0}^{1} y\,dx\right)\int_{0}^{1}(y_{n}-y)\,dx\int_{0}^{1}a(x)y_{n,x}\bar{p}_{x}^{i}\, dx\\
&\quad&+\ell'\left(\int_{0}^{1} y\,dx\right)\int_{0}^{1} a(x)(y_{n,x}-y_{x})\bar{p}_{x}^{i}\, dx=X^{2}_{51}+X^{2}_{52}.
\end{eqnarray*} 
Then, 
\begin{eqnarray*}
(D\mathcal{A}_i(y_{n},p^{1}_{n},p^{2}_{n},h_{n})-D\mathcal{A}_i(y,p^{1},p^{2},h))\cdot(\bar y, \bar p^1,\bar p^2,\bar h)
&=&X^{2}_{11}+X^{2}_{12}+X^{2}_{21}+X^{2}_{31}+X^{2}_{32}\\
&\quad&+ X^{2}_{33}+X^{2}_{41}+X^{2}_{42}+X^{2}_{51}+X^{2}_{52}.
\end{eqnarray*}
We now proceed to bound each term on the right-hand side of the previous equality. 
To this end, making use, when necessary, of Proposition \ref{Corollary 2}, Lemma  \ref{lema para proposition}, the condition $\ell\in C^{2}(\R)$, and proceeding with similar arguments given before,  we have:

\begin{equation*}
\begin{array}{l}
\displaystyle\bullet \, \int_{0}^{T}\int_{0}^{1}\rho_{2}^{2}|X_{11}^{2}|^{2}dxdt
 \leq  C\int_{0}^{T}\int_{0}^{1}\rho^{2}_{2}\left(\int_{0}^{1}(y_{n}-y)\,dx\right)^2\left(\int_{0}^{1}\bar{y}\, dx\right)^{2}|(a(x)p_{n,x}^{i})_{x}|^{2}dxdt\\
 \displaystyle\leq C\sup_{t\in[0,T]}\left\{ e^{\frac{-2M}{m(t)}}\left(\int_{0}^{1}(y_{n}-y)\,dx\right)^{2}\right\}\int_{0}^{T}\int_{0}^{1}\rho_{1}^{2}\left(\int_{0}^{1}\bar{y} \, dx \right)^2
|(a(x)p_{n,x}^{i})_{x}|^{2}dxdt\\
 \leq C \|(y_{n}-y),(p^{1}_{n}-p^{1}),(p^{2}_{n}-p^{2}),(h_{n}-h)\|^{2}_{\mathcal{Y}}\,\|(\bar y, \bar p^{1},\bar p^{2}, \bar h)\|^{2}_{\mathcal{Y}}\left\lbrace{\|y_n,p^{1}_n,p^{2}_n,h_n\|^{2}_{\mathcal{Y}}}\right.\\  \displaystyle\quad +\|(y_{n}-y),(p^{1}_{n}-p^{1}),(p^{2}_{n}-p^{2}),(h_{n}-h)\|^{2}_{\mathcal{Y}} \left.\right\rbrace \rightarrow 0.\\ \\
\displaystyle\bullet\, \int_{0}^{T}\int_{0}^{1}\rho_{2}^{2}|X_{31}^{2}|^{2}dxdt
 \leq C\int_{0}^{T}\int_{0}^{1}\rho_{2}^{2}\left|\ell''\left(\int_{0}^{1} y_{n}\,dx\right)-\ell''\left(\int_{0}^{1} y\,dx\right)\right|^{2}\left(\int_{0}^{1}\bar{y}\, dx\right)^{2}\\ 
\displaystyle \quad \left(\int_{0}^{1}a(x)y_{n,x}p_{n,x}^{i}\,dx\right)^{2}dxdt\\
\displaystyle \leq C\int_{0}^{T}\int_{0}^{1}\rho_{2}^{2}\left|\ell''\left(\int_{0}^{1} y_{n}\,dx\right)-\ell''\left(\int_{0}^{1} y\,dx\right)\right|^{2}\left(\int_{0}^{1}\bar{y}\, dx\right)^{2}\\
\displaystyle \quad \left(\|\sqrt{a(x)}y_{n,x}\|^{2}_{L^{2}(0,1)}+\|\sqrt{a(x)}p_{n,x}^{i}\|^{2}_{L^{2}(0,1)}\right)\, dxdt\\
 \displaystyle\leq C\left(\int_{0}^{T}\int_{0}^{1}\rho_{2}^{2}\left|\ell''\left(\int_{0}^{1} y_{n}\,dx\right)-\ell''\left(\int_{0}^{1} y\,dx\right)\right|^{2}\left(\int_{0}^{1}\bar{y}\, dx\right)^{2}\right.
  \|\sqrt{a(x)}y_{n,x}\|^{2}_{L^{2}(0,1)}\ dxdt\\
 \displaystyle\quad +\int_{0}^{T}\int_{0}^{1}\rho_{2}^{2}\left|\ell''\left(\int_{0}^{1} y_{n}\,dx\right)-\ell''\left(\int_{0}^{1} y\,dx\right)\right|^{2}\left(\int_{0}^{1}\bar{y}\, dx\right)^{2}\left.\|\sqrt{a(x)}p_{n,x}^{i}\|^{2}_{L^{2}(0,1)}\ dxdt\right)\\
 \leq C\|(y_{n}-y),(p^{1}_{n}-p^{1}),(p^{2}_{n}-p^{2}),(h_{n}-h)\|^{2}_{\mathcal{Y}}\|y_n,p^{1}_n,p^{2}_n,h_n)\|^{2}_{\mathcal{Y}}\|(\bar y, \bar p^{1},\bar p^{2}, \bar h)\|^{2}_{\mathcal{Y}}\rightarrow 0.\\ \\
\displaystyle\bullet\, \int_{0}^{T}\int_{0}^{1}\rho_{2}^{2}|X_{32}^{2}|^{2}dxdt 
 \leq  C	\int_{0}^{T}\int_{0}^{1}\rho_{2}^{2}\left|\int_{0}^{1}\bar{y}\, dx\right|^{2}\left(\int_{0}^{1} a(x)(y_{n,x}-y_{x})p_{n,x}^{i}\, dx\right)^{2} dxdt\\
\displaystyle\leq  C\int_{0}^{T}\int_{0}^{1}\rho_{2}^{2}\left|\int_{0}^{1}\bar{y}\, dx\right|^{2}\left(\|\sqrt{a(x)}(y_{n,x}-y_{x})\|^{2}_{L^{2}(0,1)}+\|\sqrt{a(x)}p_{n,x}^{i}\|^{2}_{L^{2}(0,1)}\right) dxdt\\
 \displaystyle\leq C\left(\int_{0}^{T}\int_{0}^{1}\rho_{2}^{2}\left|\int_{0}^{1}\bar{y}\, dx\right|^{2}\|\sqrt{a(x)}(y_{n,x}-y_{x})\|^{2}_{L^{2}(0,1)} dxdt\right.\\
 \displaystyle\quad +\left.\int_{0}^{T}\int_{0}^{1}\rho_{2}^{2}\left|\int_{0}^{1}\bar{y}\, dx\right|^{2}\|\sqrt{a(x)}p_{n,x}^{i}\|_{L^{2}(0,1)}\ dxdt\right)\\
 \leq C\|(y_{n}-y),(p^{1}_{n}-p^{1}),(p^{2}_{n}-p^{2}),(h_{n}-h)\|^{2}_{\mathcal{Y}}\|y_n,p^{1}_n,p^{2}_n,h_n)\|^{2}_{\mathcal{Y}}\|(\bar y, \bar p^{1},\bar p^{2}, \bar h)\|^{2}_{\mathcal{Y}}\rightarrow 0.\\ \\

\end{array}
\end{equation*}
\begin{equation*}
\begin{array}{l}
\displaystyle\bullet\, \int_{0}^{T}\int_{0}^{1}\rho_{2}^{2}|X_{41}^{2}|^{2}dxdt  \leq C\int_{0}^{T}\int_{0}^{1}\rho_{2}^{2}\left(\int_{0}^{1}(y_{n}-y)\,dx\right)^{2}\left(\int_{0}^{1}a(x)\bar{y}_{x}p_{n,x}^{i}\,dxdt\right)^{2}\\
 \displaystyle\leq C\sup_{t\in[0,T]}\left\{ e^{\frac{-2M}{m(t)}}\left(\int_{0}^{1}(y_{n}-y)\,dx\right)^{2}\right\}\int_{0}^{T}\int_{0}^{1}\rho_{1}^{2}\left(\|\sqrt{a(x)}\bar{y}_{x}\|^{2}_{L^{2}(0,1)}\right. +\left.\|\sqrt{a(x)}p^{i}_{n,x}\|^{2}_{L^{2}(0,1)}\right) dxdt\\
 \displaystyle\leq C\|(y_{n}-y),(p^{1}_{n}-p^{1}),(p^{2}_{n}-p^{2}),(h_{n}-h)\|^{2}_{\mathcal{Y}}\left(\|(\bar y, \bar p^{1},\bar p^{2}, \bar h)\|^{2}_{\mathcal{Y}} +  \|(y_n, p^{1}_n, p^{2}_n, h_n\|^{2}_{\mathcal{Y}}\right)\rightarrow 0. \\
\\
\displaystyle\bullet\, \int_{0}^{T}\int_{0}^{1}\rho_{2}^{2}|X_{42}^{2}|^{2}dxdt
 \leq C\int_{0}^{T}\int_{0}^{1}\rho_{2}^{2}\left|\int_{0}^{1}a(x)\bar{y}_{x}(p_{n,x}^{i}-p_{x}^{i})\, dx\right|^{2} dxdt\\
\displaystyle\leq  C \int_{0}^{T}\int_{0}^{1}\rho^{2}_{2}\left(\|\sqrt{a(x)}\bar{y}_{x}\|^{2}_{L^{2}(0,1)}+\|\sqrt{a(x)}(p^{i}_{n,x}-p^{i}_{x})\|^{2}_{L^{2}(0,1)}\right) dxdt \\

\displaystyle\leq C\|(y_{n}-y),(p^{1}_{n}-p^{1}),(p^{2}_{n}-p^{2}),(h_{n}-h)\|^{2}_{\mathcal{Y}}\,\|(\bar y, \bar p^{1},\bar p^{2}, \bar h)\|^{2}_{\mathcal{Y}}\rightarrow 0. 
\end{array}
\end{equation*}
And finally, 
\begin{equation*}
\begin{array}{l}
\displaystyle\bullet \quad \int_{0}^{T}\int_{0}^{1}\rho_{2}^{2}|X_{52}^{2}|^{2}dxdt     
\leq C\int_{0}^{T}\int_{0}^{1}\rho_{2}^{2}\left(\int_{0}^{1}(y_{n}-y)\,dx\right)^{2}\left|\int_{0}^{1} a(x)(y_{n,x}-y_{x})\bar{p}_{x}^{i}\,dx\right|^{2}dxdt\\
\displaystyle\leq C\sup_{t\in[0,T]}\left\{ e^{\frac{-2M}{m(t)}}\left(\int_{0}^{1}(y_{n}-y)\,dx\right)^{2}\right\}\int_{0}^{T}\int_{0}^{1}\rho_{1}^{2}\left(\|\sqrt{a(x)}(y_{n,x}-y_{x})\|^{2}_{L^{2}}\right.
+\left.\|\sqrt{a(x)}\bar{p}_{x}^{i}\|^{2}_{L^{2}}\right) dxdt\\
\leq C\|(y_{n}-y),(p^{1}_{n}-p^{1}),(p^{2}_{n}-p^{2}),(h_{n}-h)\|^{2}_{\mathcal{Y}}\\\left( \|(y_{n}-y),(p^{1}_{n}-p^{1}),(p^{2}_{n}-p^{2}),(h_{n}-h)\|^{2}_{\mathcal{Y}} + \|(\bar y, \bar p^{1},\bar p^{2}, \bar h)\|^{2}_{\mathcal{Y}}\right)\rightarrow 0.
\end{array}
\end{equation*}

The terms $X_{12}^{2}$ and $X_{21}^{2}$ follow similarly to the terms $X_{1}^{2}$ and $X_{1}^{3}$, respectively. Furthermore, we can proceed for $X_{33}^{2}$ in the same way as for $X_{32}^{2}$, and for $X_{51}^{2}$ analogously to $X_{41}^{2}$.

The case $i=3$ is immediate and, consequently, \eqref{convergence of D} holds. Therefore, $(y,p^{1},p^{2},h)\longmapsto\mathcal{A}^{\prime}(y,p^{1},p^{2},h)$ is continuous from $\mathcal{Y}$ into $\mathcal{L}(\mathcal{Y},\mathcal{Z})$ and as consequently, in view of classical results, we will
have that $\mathcal{A}$ is Fr\'echet-differentiable and $C^{1}$.

\renewcommand{\theequation}{\thesection.\arabic{equation}}
\section{- Well-Posedness of system \ref{eq:optimality_system}}\label{appendix C}

By applying the change of variables $\varphi^{i}(t) = p^{i}(T - t)$, $\tilde{y}(t) = y(T - t)$, and $\tilde{y}_{i,d} = y_{i,d}(T - t)$ to the second equation of the optimality system~\eqref{eq:optimality_system}, for $i = 1, 2$, we obtain the following system. For simplicity, we retain the same notation for $y$ and $y_{i,d}$ in place of $\tilde{y}$ and $\tilde{y}_{i,d}$, respectively.

\begin{equation*}
\left\{\begin{aligned}
&y_t - \left(a(x)\ell\left(\int_0^1 y dx\right) y_x\right)_x = h \mathds{1}_{\mathcal{O}} - \sum_{i=1}^{2}\frac{1}{\mu_i} \varphi^i \mathds{1}_{\mathcal{O}_i} &&\text{in}&& Q, \\
&\varphi_t^i - \left( a(x) \ell\left(\int_0^1 y dx\right) \varphi_x^i \right)_x  +  \ell'\left(\int_0^1 y dx\right) \int_0^1 ( a(x) y_x \ \varphi_x^i) dx = \alpha_i (y-y_{i,d}) \mathds{1}_{\mathcal{O}_{i,d}}  &&\text{in}&& Q,\\
&y(0,t)=y(1,t)=0, \ \ \varphi^i(0,t) = \varphi^i(1,t) = 0 &&\text{on}&& (0,T), \\
&y(\cdot,0) = y_{0}, \ \ \varphi^i(\cdot,0) = \varphi^{i}_{0} &&\text{in}&& (0,1).
\end{aligned}
\right.
\end{equation*}
Let $(w_{i})_{i}^{\infty}$ be an orthonormal basis of $H^{1}_{a}(0,1)$ such that 
\begin{equation*}
-(a(x)w_{i,x})_{x}=\lambda_{i}w_{i}.    
\end{equation*}
Fix $m\in\mathbb{N}^{*}$. Due to Carath\'eodory's theorem, there exist absolutely continuous functions $g_{im}=g_{im}(t)$ and $h_{im}=h_{im}(t)$ with $i\in\{1,2,...,m\}$ such that 
\begin{equation*}
t\in[0,T]\mapsto y_{m}(t)=\sum_{i=1}^{m}g_{im}(t)w_{i} \in H_{a}^{1}(0,1),    
\end{equation*}
and 
\begin{equation*}
t\in[0,T]\mapsto \varphi_{m}(t)=\sum_{i=1}^{m}h_{im}(t)w_{i} \in H_{a}^{1}(0,1),    
\end{equation*}
satisfy
\begin{equation}
\label{eq:galerkin_system}
\left\{\begin{aligned}
&(y_{m,t},w) - \ell\left(\int_0^1 y_{m} dx\right)((a(x) y_{m,x})_x,w) = (h1_{\mathcal{O}},w) - \sum_{i=1}^{2}\frac{1}{\mu_i} (\varphi^{i}_{m} 1_{\mathcal{O}_{i}},w) &&\text{in}&& Q, \\
&(\varphi_{m,t}^i,\hat{w}) - \ell\left(\int_0^1 y_{m} dx\right) ((a(x) \varphi_{mx}^i)_x,\hat{w})\\  
&+ \ell'\left(\int_0^1 y_{m} dx\right)\left(\int_0^1 a(x) y_{m,x} \ \varphi_{m,x}^idx, \hat{w}\right) = (\alpha_i (y_m -y_{i,d})1_{\mathcal{O}_{i,d}},\hat{w})   &&\text{in}&& Q,\\
&y_{m}(0,t)=y_{m}(1,t)=0, \ \ \varphi_{m}^i(0,t) = \varphi_{m}^i(1,t) = 0 &&\text{on}&& (0,T), \\
&y_{m}(\cdot,0)\to y_{0}, \ \ \varphi_{m}^i(\cdot,0)\to \varphi^{i}_{0}   &&\text{in}&& (0,1),
\end{aligned}
\right.
\end{equation}
for any $w,\hat{w}\in [w_{1},w_{2},...,w_{m}]$, where $(\cdot,\cdot)=(\cdot,\cdot)_{L^{2}(0,1)}$ and for convenience, we will adopt $\| \cdot \| = \| \cdot \|_{L^{2}(0,1)}$.\\  

\noindent\textbf{Estimate I}. 
Taking $w=y_{m}$ and $\hat{w}=\varphi^{i}_{m}$ in \eqref{eq:galerkin_system}, then 
\begin{eqnarray*}
\frac{1}{2}\frac{d}{dt}\left(\|y_{m}\|^{2}+\sum_{i=1}^{2}\|\varphi_{m}^{i}\|^{2}\right)&+& \ell\left(\int_0^1 y_{m} dx\right)\left(\|\sqrt{a}y_{m,x}\|^{2}+\sum_{i=1}^{2}\|\varphi^{i}_{m,x}\|\right)\\&+& \ell'\left(\int_{0}^{1}y_m dx\right)\sum_{i=1}^{2}\int_{0}^{1}\varphi^{i}_{m}(\sqrt{a}y_{m,x},\sqrt{a}\varphi^{i}_{m,x})\\&=&(h,y_{m})- \sum_{i=1}^{2}\frac{1}{\mu_{i}}(y_{m},\varphi^{i}_{m})+\sum_{i=1}^{2}(y_{m},\varphi^{i}_{m})-\sum_{i=1}^{2}(y_{i,d},\varphi^{i}_{m}).
\end{eqnarray*}
Note that, by the fact that $\ell\in C^{2}(\mathbb{R})$ there exists constants $C_{0}$ and $C_{1}$ such that, 
\begin{eqnarray}
\label{eq:bounds_on_l}
\left|\ell\left(\int_{0}^{1}y_{m}dx\right)\right|\leq C_{0} \ \ \text{and} \ \   \left|\ell'\left(\int_{0}^{1}y_{m}dx\right)\right|\leq C_{1}.
\end{eqnarray}
Taking $C=max\{C_{0},C_{1}\}$ and denoting by $C_{*}$ a generic constant, applying the Cauchy-Schwarz inequality, we have 
\begin{equation*}
\begin{array}{l}
\dfrac{1}{2}\dfrac{d}{dt}\left(\|y_{m}\|^{2}+\displaystyle\sum_{i=1}^{2}\|\varphi_{m}^{i}\|^{2}\right)+C\left(\|\sqrt{a}y_{m,x}\|^{2}+\displaystyle\sum_{i=1}^{2}\|\sqrt{a}\varphi_{m,x}^{i}\|^{2}\right)\vspace{0.1cm}\\
-\,C\displaystyle\sum_{i=1}^{2}\|\varphi^{i}_{m}\|\|\sqrt{a}\varphi_{m,x}^{i}\|\|\sqrt{a}y_{m,x}\| \leq C_{*}\left(\|h\|\|y_{m}\|+\displaystyle\sum_{i=1}^{2}\|y_{m}\|\|\varphi_{m}^{i}\|+\sum_{i=1}^{2}\|y_{id}\|\|\varphi_{m}^{i}\|\right).
\end{array}
\end{equation*}
Using the Young's inequality in the right side we obtain, for each $i=1,2.$, that
\begin{equation}\label{systemwellposedness}
\begin{split}
\frac{1}{2}\frac{d}{dt}\left(\|y_{m}\|^{2}+\sum_{i=1}^{2}\|\varphi_{m}^{i}\|^{2}\right)&+C\left(1-\sum_{i=1}^{2}\|\varphi^{i}_{m}\|^{2}\right)\|\sqrt{a}y_{m,x}\|^{2}+\frac{3C}{4}\sum_{i=1}^{2}\|\sqrt{a}\varphi_{m,x}^{i}\|^{2}\\
&\leq C_{*}\left(\|h\|^{2}+\|y_{m}\|^{2}+\sum_{i=1}^{2}\|\varphi_{m}^{i}\|^{2}+\sum_{i=1}^{2}\|y_{id}\|^{2}\right). 
\end{split}
\end{equation} 
Define $\displaystyle\bar{A}(t)=\sum_{i=1}^{2}\|\varphi^{i}_{m}\|^{2}$, for all $t\in[0,t_{m}]$. We will proof that $\bar{A}(t)<\frac{1}{2}$. Indeed, assuming by contradiction that is not true, then there exist $t^{*}_{m}$ such that $\bar{A}(t)<\frac{1}{2}$ and $\bar{A}(t^{*}_{m})=\frac{1}{2}$. Therefore, we have 
\begin{equation*}
\|\varphi_{0}^{1}\|^{2}+\|\varphi_{0}^{2}\|^{2}<\frac{1}{2}.    
\end{equation*}
By hypothesis, there exist $\sigma_{0}>0$ such that 
\begin{equation}\label{smalldatawellposedness}
\|h\|^{2}_{L^{2}((0,T);L^2(\mathcal{O}))}+\|y_{0}\|_{H^{1}_{a}(0,1)}+\sum_{i=1}^{2}\|y_{i,d}\|^{2}_{L^{2}((0,T);L^2(\mathcal{O}_{i}))}+\sum_{i=1}^{2}\|\varphi_{0}^{i}\|_{L^{2}(0,1)}^{2}<\sigma_{0}.
\end{equation}
Therefore, we have 
\begin{equation}\label{boundedforsmall}
\|h\|^{2}_{L^{2}((0,T);L^2(\mathcal{O}))}+\|y_{0}\|_{H^{1}_{a}(0,1)}+\sum_{i=1}^{2}\|y_{i,d}\|^{2}_{L^{2}((0,T);L^2(\mathcal{O}_{i}))}+\sum_{i=1}^{2}\|\varphi_{0}^{i}\|_{L^{2}(0,1)}^{2}<\frac{1}{C_{*}8}.
\end{equation}
Integration \eqref{systemwellposedness} from  to $0$ from $t_{m}^{*}$ and using Gronwall's inequality we deduce that, 
\begin{equation*}
\begin{split}
&\frac{1}{2}\left(\|y_{m}(t_{m}^{*})\|^{2}+\sum_{i=1}^{2}\|\varphi_{m}^{i}(t_{m}^{*})\|^{2}\right)+C\int_{0}^{t^{*}_{m}}\|\sqrt{a}y_{m,x}\|^{2}ds+\frac{3C}{4}\int_{0}^{t^{*}_{m}}\sum_{i=1}^{2}\|\sqrt{a}\varphi_{m,x}^{i}\|^{2}ds\\
&\leq C_{*}\left(\|h\|^{2}_{L^{2}((0,T),L^2(O))}+\|y_{0}\|_{H^{1}_{a}(0,1)}+\sum_{i=1}^{2}\left(\|y_{i,d}\|^{2}_{L^{2}((0,T),L^2(O_{i}))}+\|\varphi_{0}^{i}\|_{L^{2}(0,1)}^{2}\right)\right.\\ 
&+\left .\int_{0}^{t^{*}_{m}}\left(\|y_{m}\|^{2}+\sum_{i=1}^{2}\|\varphi_{m}^{i}\|^{2}\right)
ds\right)\\
&\leq C_{*}\left(\|h\|^{2}_{L^{2}((0,T);L^2(\mathcal{O}))}+\|y_{0}\|_{H^{1}_{a}(0,1)}+\sum_{i=1}^{2}\|y_{i,d}\|^{2}_{L^{2}((0,T);L^2(\mathcal{O}_{i}))}+\sum_{i=1}^{2}\|\varphi_{0}^{i}\|^{2}_{L^{2}(0,1)}\right).
\end{split}
\end{equation*}
By \eqref{boundedforsmall}, we can conclude that
\begin{equation*}
\bar{A}(t_{m}^{*})=\sum_{i=1}^{2}\|\varphi^{i}_{m}(t_{m}^{*})\|^{2}<\frac{1}{2},    
\end{equation*}
but this is a contradiction. Then, 
\begin{equation}\label{energyestimatewellpo1}
\begin{split}
&\|y_{m}(t)\|^{2}+\sum_{i=1}^{2}\|\varphi_{m}^{i}(t)\|^{2}+C\int_{0}^{t}\|\sqrt{a}y_{m,x}\|^{2}ds+\frac{3C}{2}\int_{0}^{t}\sum_{i=1}^{2}\|\sqrt{a}\varphi_{m,x}^{i}\|^{2}ds\\ 
&\leq C_{*}\left(\|h\|^{2}_{L^{2}((0,T);L^2(\mathcal{O}))}+\|y_{0}\|_{H^{1}_{a}(0,1)}+\sum_{i=1}^{2}\|y_{i,d}\|^{2}_{L^{2}((0,T);L^2(\mathcal{O}_{i}))}+\sum_{i=1}^{2}\|\varphi_{0}^{i}\|^{2}_{L^{2}(0,1)}\right).
\end{split}
\end{equation}
As the term on right side of \eqref{energyestimatewellpo1} is independent of m, we can extend the solution $(y_{m},\varphi^{i}_{m})$ to the entire interval $[0,T]$ and in same way we can estimate \eqref{energyestimatewellpo1} for $t\in[0,T]$,
\begin{equation*}
\begin{split}
&\|y_m\|_{L^{\infty}(0,T;L^{2}(0,1))}^{2}+\sum_{i=1}^{2}\|\varphi_{m}^{i}\|_{L^{\infty}(0,T;L^{2}(0,1))}^{2}+\|\sqrt{a}y_{m,x}\|_{L^{2}(0,T;L^{2}(0,1))}^{2}+\sum_{i=1}^{2}\|\sqrt{a}\varphi_{m,x}^{i}\|_{L^{2}(0,T;L^{2}(0,1))}^{2}\\ 
&\leq C_{*}\left(\|h\|^{2}_{L^{2}((0,T);L^2(\mathcal{O}))}+\|y_{0}\|_{H^{1}_{a}(0,1)}+\sum_{i=1}^{2}\|y_{i,d}\|^{2}_{L^{2}((0,T);L^2(\mathcal{O}_{i}))}+\sum_{i=1}^{2}\|\varphi_{0}^{i}\|^{2}_{L^{2}(0,1)}\right) = \mathcal{K}_1.
\end{split}
\end{equation*}

\textbf{Estimate II}. Taking $w=y_{m,t}$ and $\hat{w}=\varphi^{i}_{m,t}$ in \eqref{eq:galerkin_system}, and using the constants $C_0$ and $C_1$ in \eqref{eq:bounds_on_l}, we get
\begin{equation*}
\begin{split}
&\|y_{m,t}\|^2 + \sum_{i=1}^2 \|\varphi^i_{m,t}\|^2 + C_0 \frac{1}{2}\frac{d}{dt} \left( \|\sqrt{a}y_{m,x}\|^2 + \sum_{i=1}^2 \|\sqrt{a}y_{m,x}\|^2 \right)\\
&\,\, - C_1 \sum_{i=1}^2 \|\sqrt{a} y_{m,x}\|\|\sqrt{a}\varphi^i_{m,x}\|\|\varphi^i_{m,t}\| 
\leq \|h\|^2 + \frac{1}{4}\|y_{m,t}\|^2 + C\sum_{i=1}^2 \|\varphi^i_m\|^2 + \frac{1}{4}\|y_{m,t}\|^2\\
& \,\, + 2 C \|y_m\|^2 + \frac{1}{4}\sum_{i=1}^2  \|\varphi^i_{m,t}\|^2 + C \sum_{i=1}^2  \|y_{i,d}\|^2 
+ \frac{1}{4}\sum_{i=1}^2  \|\varphi^i_{m,t}\|^2.
\end{split}
\end{equation*}
Using Young's inequality there exists a constant $D>0$ such that 
\begin{equation*}
\begin{split}
&\|y_{m,t}\|^2 + \sum_{i=1}^2  \|\varphi^i_{m,t}\|^2 + \frac{d}{dt} \left( \|\sqrt{a}y_{m,x}\|^2 + \sum_{i=1}^2  \|\sqrt{a}\varphi^i_{m,x}\|^2 \right) \\
&\leq D \left[ \|h\|^2 + \sum_{i=1}^2  \|y_{i,d}\|^2 + \|y_m\|^2 + \sum_{i=1}^2  \|\varphi^i_m\|^2  + \sum_{i=1}^2  \|\sqrt{a}y_{m,x}\|^2 \|\sqrt{a}\varphi^i_{m,x}\|^2 \right].  
\end{split}
\end{equation*}
Integrating in $t$ on $[0,t]$ and applying Gronwall's inequality we get
\begin{equation*}
\begin{split}
&\|y_{m,t}\|^2_{L^2(0,T;L^2(0,1))} + \sum_{i=1}^2  \|\varphi^i_{m,t}\|^2_{L^2(0,T;L^2(0,1))} + \left( \|\sqrt{a}y_{m,x}(t)\|^2 + \sum_{i=1}^2  \|\sqrt{a}\varphi^i_{m,x}(t)\|^2 \right) \\
&\leq D \left[ \|\sqrt{a}y_{m,x}(0)\|^2 + \sum_{i=1}^2  \|\sqrt{a}\varphi^i_{m,x}(0)\|^2 +\|h\|^2_{L^2(0,T;L^2(\mathcal{O}))} + \sum_{i=1}^2  \|y_{i,d}\|^2_{L^2(0,T;L^2(\mathcal{O}_{i}))}\right.\\
&\,\, + \|y_m\|^2_{L^\infty(0,T;L^2(0,1))}T  + \sum_{i=1}^2  \|\varphi^i_m\|^2_{L^\infty(0,T;L^2(0,1))}T    \\
&\left.\,\, + \int_0^t \left(  \|\sqrt{a}y_{m,x}\|^2 + \sum_{i=1}^2  \|\sqrt{a}\varphi^i_{m,x}\|^2 \right) \left(  \|\sqrt{a}y_{m,x}\|^2 + \sum_{i=1}^2  \|\sqrt{a}\varphi^i_{m,x}\|^2 \right) \right] \\
&\leq D \left[ \|\sqrt{a}y_{m,x}(0)\|^2 + \sum_{i=1}^2  \|\sqrt{a}\varphi^i_{m,x}(0)\|^2 +\|h\|^2_{L^2(0,T;L^2(\mathcal{O}))} + \sum_{i=1}^2  \|y_{i,d}\|^2_{L^2(0,T;L^2(\mathcal{O}_{i}))}  \right. \\
&\left. \,\,+ \|y_m\|^2_{L^\infty(0,T;L^2(0,1))}T + \sum_{i=1}^2  \|\varphi^i_m\|^2_{L^\infty(0,T;L^2(0,1))}T \right] e^{D \int_0^t \left(  \|\sqrt{a}y_{m,x}\|^2 + \sum_{i=1}^2  \|\sqrt{a}\varphi^i_{m,x}\|^2 \right) ds}.
\end{split}
\end{equation*}
Finally, using estimate I,
\begin{equation*}
\begin{split}
&\|y_{m,t}\|^2_{L^2(0,T,L^2(0,1))} + \sum_{i=1}^2  \|\varphi^i_{m,t}\|^2_{L^2(0,T;L^2(0,1))} + \|\sqrt{a}y_{m,x}\|^2_{L^\infty(0,T;L^2(0,1))} + \sum_{i=1}^2  \|\sqrt{a}\varphi^i_{m,x}\|^2_{L^\infty(0,T;L^2(0,1))} \\
&\leq D \left[ \|\sqrt{a}y_{m,x}(0)\|^2_{L^2(0,1)} + \sum_{i=1}^2  \|\sqrt{a}\varphi^i_{m,x}(0)\|^2_{L^2(0,1)} +\|h\|^2_{L^2(0,T;L^2(\mathcal{O}))}  \right. \\
&\left. + \sum_{i=1}^2  \|y_{i,d}\|^2_{L^2(0,T;L^2(\mathcal{O}_{i}))} + 3\mathcal{K}_1 T  \right] e^{3 D \mathcal{K}_1} =\mathcal{K}_2.
\end{split}
\end{equation*}
\textbf{Estimate III}. Taking $w=-(a(x)y_{m,x})_x$ and $\hat{w}=-(a(x) \varphi^{i}_{m,x})_x$ in \eqref{eq:galerkin_system}, using the bounds \eqref{eq:bounds_on_l} and proceeding as in the above estimates, we get
\begin{equation*}
\begin{split}
&-(y_{m,t}, (a y_{m,x})_x) - \sum_{i=1}^2  (\varphi^i_{m,t}, (a \varphi^i_{m,x})_x) + C_0\left(\| (a y_{m,x})_x \|^2 + \sum_{i=1}^2  \| (a \varphi^i_{m,x})_x \|^2 \right) \\
& - C_1 \sum_{i=1}^2  \|\sqrt{a} y_{m,x}\|\|\sqrt{a}\varphi^i_{m,x}\|\|(a\varphi^i_{m,x})_x\| \leq C_\epsilon \|h\|^2 + \epsilon\|(a y_{m,x})_x\|^2 + C_\epsilon \sum_{i=1}^2  \|\varphi^i_m\|^2  + \epsilon\|(a y_{m,x})_x\|^2 \\
& + C_\epsilon 2 \|y_m\|^2 + \epsilon\sum_{i=1}^2  \|(a\varphi^i_{m,x})_x\|^2 + C_\epsilon\sum_{i=1}^2  \|y_{i,d}\|^2 + \epsilon\sum_{i=1}^2  \|(a\varphi^i_{m,x})_x\|^2.
\end{split}
\end{equation*}
Rearranging and using Young's inequality, for some $D>0$, we have 
\begin{equation*}
\begin{split}
&\frac{d}{dt}\left( \|\sqrt{a} y_{m,x} \|^2 + \sum_{i=1}^2  \|\sqrt{a} \varphi^i_{m,x}\|^2\right) + \|(a y_{m,x})_x\|^2 + \sum_{i=1}^2  \|(a \varphi^i_{m,x})_x\|^2 \\
&\leq D \left( \|h\|^2 + \sum_{i=1}^2  \|y_{i,d}\|^2 + \|y_m\|^2 + \sum_{i=1}^2  \|\varphi^i_ m\|^2 + 
\sum_{i=1}^2 \|\sqrt{a}y_{m,x}\|^2\|\sqrt{a}\varphi^i_{m,x}\|^2 \right).
\end{split}
\end{equation*}
Integrating in $t$ on $[0,t]$, using Gronwall's inequality and proceeding as in estimate II, we have that
\begin{equation*}
\begin{split}
&\|\sqrt{a} y_{m,x}\|^2_{L^\infty(0,T;L^2(0,1))} +  \|(a y_{m,x})_x\|^2_{L^2(0,T;L^2(0,1))} + \sum_{i=1}^2  \|\sqrt{a} \varphi^i_{m,x}\|^2_{L^\infty(0,T;L^2(0,1))}  \\
& + \sum_{i=1}^2  \|(a \varphi^i_{m,x})_x\|^2_{L^2(0,T;L^2(0,1))} \leq  D\left[ \|\sqrt{a} y_{m,x}(0) \|^2 + \sum_{i=1}^2  \|\sqrt{a} \varphi^i_{m,x}(0)\|^2 + \|h\|^2_{L^2(0,t;L^2(\mathcal{O}))}\right.\\
& \left. + \sum_{i=1}^2  \|y_{i,d}\|^2_{L^2(0,t;L^2(\mathcal{O}_{i}))} + 3 \mathcal{K}_1 \right] e^{3D\mathcal{K}_1} = \mathcal{K}_3.
\end{split}
\end{equation*}
Since $\mathcal{K}_1$, $\mathcal{K}_2$ and $\mathcal{K}_3$ do not depend on $m$, the three estimates above imply that the sequences $(y_m)$, $(\varphi^1_m)$ and $(\varphi^1_m)$ are bounded in 
$$
L^2(0,T;L^2(0,1)) \cap H^1(0,T;H^2_a(0,1)).
$$
Therefore, there exist subsequences $(y_{m_j})$, $(\varphi^i_{m_j})$, $i=1,2$, such that
$$
y_{m_j} \rightharpoonup y,\qquad \varphi^i_{m_j} \rightharpoonup \varphi^i, \qquad \text{as } j\to \infty, 
$$
weakly in $L^2(0,T;L^2(0,1)) \cap H^1(0,T;H^2_a(0,1))$.
In fact, since the immersion $H^2_a(0,1)$ in $H^1_a(0,1)$ is compact, by the theorem of Aubin-Lions we get that
$$
y_{m_j} \to y,\qquad \varphi^i_{m_j} \to \varphi^i, \qquad \text{as } j\to \infty, 
$$
strongly in $H^1_a(0,1)$. Using the continuity of $\ell$ and $\ell'$,
at least for a subsequence, we can pass to the limit in $m$ in  all the terms of the the approximate system \eqref{eq:galerkin_system}, including the integral term 
$\int_0^1 a(x) y_{mx} \ \varphi_{mx}^idx$.

The uniqueness is proved by standard methods for nonlinear systems, taking into consideration the bounds established in estimates I - III for $y$, $y_x$ and $\varphi^i$ and $\varphi^i_x$, for $i=1,2$.

\section*{Acknowledgments}
\noindent The authors express their sincere thanks to the referee for his/her valuable comments. The research of João Carlos Barreira and Suerlan Silva are partially supported by Coordena\c{c}\~ao de Aperfei\c{c}oamento de Pessoal de N\'ivel Superior-Brasil (CAPES)-Finance Code 001 and the author Juan L\'imaco is also partially supported by CNPQ-Brasil.

\section*{Declarations}
\noindent Competing Interests: The authors have not disclosed any competing interests.

\bibliographystyle{abbrv}
\bibliography{bib_hierarquico}

Corresponding author: Juan L\'imaco.
\end{document}